\definecolor{hypercolor}{HTML}{003399}
\newtheorem{theorem}{Theorem}[section]
\newtheorem{lemma}[theorem]{Lemma}
\newtheorem{proposition}[theorem]{Proposition}
\newtheorem{corollary}[theorem]{Corollary}
\theoremstyle{definition}
\newtheorem{remark}[theorem]{Remark}
\numberwithin{equation}{section}
\renewcommand{\ge}{\geqslant}
\renewcommand{\leq}{\leqslant}
\renewcommand{\geq}{\geqslant}
\acrodef{SHE}{Stochastic Heat Equation}
\acrodef{EW}{Edwards--Wilkinson}
\acrodef{KPZ}{Kardar--Parisi--Zhang}
\acrodef{LHS}{Left Hand Side}
\acrodef{RHS}{Right Hand Side}
\newcommand{\e}{\varepsilon}
\newcommand{\eps}{\varepsilon}
\newcommand{\ic}{\mathrm{ic}}		
\newcommand{\cmp}{\mathrm{c}}		
\newcommand{\sym}{\mathrm{sym}}		
\newcommand{\fr}{\text{fr}}			
\newcommand{\tilp}{\widetilde{p}}
\newcommand{\M}{M}
\newcommand{\Io}{\mathbf{I}}	
\newcommand{\Ho}{\mathcal{H}}	
\newcommand{\Ro}{\mathcal{R}}	
\newcommand{\Do}{\mathcal{V}}	
\newcommand{\Dio}{\mathcal{D}}	
\newcommand{\So}{\mathcal{S}}	
\newcommand{\Po}{\Omega_\phi}	
\newcommand{\Pro}{\Pi}			
\newcommand{\Jo}{\mathcal{J}}	
\newcommand{\logo}{\mathcal{L}}					
\newcommand{\Go}{\mathcal{G}}
\newcommand{\Soo}{\mathcal{G}}
\newcommand{\Pt}{\mathcal{P}}
\newcommand{\Ptj}[1]{\mathcal{P}^{\Jo}_{#1}}
\newcommand{\geno}{\mathcal{Q}}		
\newcommand{\auxo}{\mathcal{T}}		
\newcommand{\smallo}{\mathcal{A}}	
\newcommand{\Gtwo}{\mathsf{G}}		
\newcommand{\hktwo}{\mathsf{p}}		
\newcommand{\hk}{P}					
\newcommand{\smallt}{A}				
\newcommand{\C}{\mathbb{C}}
\newcommand{\R}{\mathbb{R}}
\newcommand{\Z}{\mathbb{Z}}
\newcommand{\Lsp}{\mathscr{L}}	
\newcommand{\Ssp}{\mathscr{S}}	
\newcommand{\Hsp}{\mathscr{H}}	
\newcommand{\Csp}{\mathscr{C}}	
\newcommand{\diag}{\mathrm{Dgm}}
\newcommand{\genH}{\mathscr{K}}		
\newcommand{\E}{\mathbb{E}}			
\newcommand{\ind}{\mathbf{1}}		
\newcommand{\1}{\mathbf{1}}			
\newcommand{\img}{\mathbf{i}}		
\renewcommand{\d}{\mathrm{d}}	
\renewcommand{\Re}{\mathrm{Re}}		
\renewcommand{\Im}{\mathrm{Im}}		
\newcommand{\set}[1]{{\{#1\}}}
\newcommand{\Img}{\mathrm{Img}}		
\newcommand{\Dom}{\mathrm{Dom}}		
\newcommand{\dsum}{{\sum}^\mathrm{d}} 
\newcommand{\jfn}{\mathsf{j}} 		
\newcommand{\dker}{D} 				
\newcommand{\cEM}{\gamma_\mathrm{EM}}		
\newcommand{\betaf}{\beta_\mathrm{fine}} 	
\newcommand{\betafe}{\beta_{\e,\mathrm{fine}}} 
\newcommand{\betaphi}{\beta_\Phi} 	
\newcommand{\betaC}{\beta_\star}  	
\newcommand{\betaCe}{\beta_{\star,\e}}
\newcommand{\norm}[1]{\Vert #1\Vert} 			
\newcommand{\Norm}[1]{\big\Vert #1\big\Vert} 	
\newcommand{\normo}[1]{\Vert #1\Vert_{\mathrm{op}}}	
\newcommand{\Normo}[1]{\big\Vert #1\big\Vert_{\mathrm{op}}} 	
\newcommand{\NOrmo}[1]{\Big\Vert #1\Big\Vert_{\mathrm{op}}} 	
\newcommand{\ip}[2]{\langle #1, #2\rangle} 		
\newcommand{\Ip}[2]{\big\langle #1, #2\big\rangle}
\newcommand{\IP}[2]{\Big\langle #1, #2\Big\rangle}
\renewcommand{\bar}{\overline}
\renewcommand{\tilde}{\widetilde}
\renewcommand{\hat}{\widehat}
\renewcommand{\Vec}{\overrightarrow}
\newcommand{\pFou}{\overbracket[.5pt]}
\newcommand*{\Cdot}{{\raisebox{-0.5ex}{\scalebox{1.8}{$\cdot$}}}} 
\title[Moments of the 2D SHE at criticality]{Moments of the 2D SHE at criticality}
\author{Yu Gu, Jeremy Quastel, and Li-Cheng Tsai}
\address[Yu Gu]{\hspace{33pt}Department of Mathematics, Carnegie Mellon University}
\address[Jeremy Quastel]{Department of Mathematics, University of Toronto}
\address[Li-Cheng Tsai]{\hspace{4pt}Departments of Mathematics, Rutgers University --- New Brunswick}
\subjclass[2010]{%
Primary 60H15 		
Secondary 46N30
}
\keywords{Stochastic heat equation, delta Bose gas, two-dimensional, critical}
\begin{document}
\begin{abstract}
We study the stochastic heat equation in two spatial dimensions with a multiplicative white noise, as the limit of the equation driven by a noise that is mollified in space and white in time.
As the mollification radius $ \e\to 0 $, we tune the coupling constant near the critical point,
and show that the single time correlation functions converge to a limit written in terms of 
an explicit non-trivial semigroup.
Our approach consists of two steps.
First we show the convergence of the resolvent of the (tuned) two-dimensional delta Bose gas,
by adapting the framework of \cite{dimock04} to our setup of spatial mollification.
Then we match this to the Laplace transform of our semigroup.
\end{abstract}

\maketitle
\section{Introduction and main result}
\label{s.intro}

In this paper, we study the \ac{SHE}, which informally reads
\begin{align*}
	\partial_t Z
	=
	\tfrac12\nabla^2 Z
	+
	\sqrt{\beta}\xi Z, 
	\quad
	Z = Z(t,x),
	\quad
	(t,x) \in \R_+\times\R^d,
\end{align*}
where $ \nabla^2 $ denotes the Laplacian, $ d\in\Z_+ $ denotes the spatial dimension, $ \xi $ denotes the spacetime white noise,
and $ \beta>0 $ is a tunable parameter.
In broad terms, the \ac{SHE} arises from a host of physical phenomena including
the particle density of diffusion in a random environment, 
the partition function for a directed polymer in a random environment,
and, through the inverse Hopf--Cole transformation, the height function of a random growth surface; the two-dimensional \ac{KPZ} equation.
We refer to \cite{corwin12,khoshnevisan14,comets17} and the references therein.

When $ d=1 $, the \ac{SHE} enjoys a well-developed solution theory:
For any $ Z(0,x)=Z_\ic(x) $ that is bounded and continuous, and for each $ \beta>0 $,
the \ac{SHE} (in $ d=1 $) admits a unique $ \Csp([0,\infty)\times\R) $-valued mild solution, where $\Csp$ denotes continuous functions, c.f.,
\cite{walsh86,khoshnevisan14}.
Such a solution theory breaks down in $ d \geq 2 $,
due to the deteriorating regularity of the spacetime white noise $ \xi $, as the dimension $ d $ increases.
In the language of stochastic PDE~\cite{hairer14,gubinelli15}, $ d=2 $ corresponds to the critical, and $d=3,4,\ldots$ the supercritical regimes.

Here we focus on the critical dimension $ d=2 $. 
To set up the problem,  fix a mollifier $ \varphi\in \Csp_\cmp^\infty(\R^2) $, where $\Csp_\cmp^\infty$ denotes smooth functions with compact support,  with $ \varphi \geq 0 $ and $ \int \varphi  \, \d x =1 $, and
mollify the noise  as
\begin{align*}
	\xi_\eps(t,x)
	:=
	\int_{\R^2} \varphi_\eps(x-y)\xi(t,y) \d y, 
	\quad 
	\varphi_\eps(x)
	:=
	\tfrac{1}{\eps^2} \varphi(\tfrac{x}{\eps}).
\end{align*}
Consider the corresponding \ac{SHE} driven by $ \xi_\e $,
\begin{align}\label{e.she}
	\partial_t Z_\eps
	=
	\tfrac12\nabla^2 Z_\eps
	+
	\sqrt{\beta_\eps}\xi_\eps Z_\eps, 
	\quad
	Z_\e = Z_\e(t,x),
	\quad
	(t,x) \in \R_+\times\R^2,
\end{align}
with a parameter $ \beta_\e>0 $ that has to be finely tuned  as $ \e\to 0 $. 
The noise $\xi_\eps$ is white in time, and we interpret the product between $ \xi_\e $ and $ Z_\e $ in the It\^o sense.
Let $\hktwo(t,x) := \frac{1}{2\pi t}\exp(-\frac{|x|^2}{2t})$, $x\in\R^2$, denote the standard heat kernel in two dimensions.
For fixed $ Z(0,x) = Z_\ic \in \Lsp^2(\R^2) $ and $ \e>0 $,
it is standard, though tedious, to show that the unique $ \Csp((0,\infty)\times\R^2) $-valued mild solution of~\eqref{e.she}
is given by the chaos expansion
\begin{align}
	\label{e.chaos.expansion}
	&Z_\eps(t,x)
	=
	\int_{\R^2}\hktwo(t,x-x') Z_\ic(x')\,\d x'
	+
	\sum_{k=1}^\infty I_{\e,k}(t,x),
\\
	\label{e.chaos}
	&I_{\e,k}(t,x)
	:=
	\int
	\Big( \prod_{s=1}^k \hktwo(\tau_{s+1}-\tau_{s},x^{(s+1)}-x^{(s)}) 
	\sqrt{\beta_\e} \xi_\e(\tau_s,x^{(s)}) \d \tau_s \d x^{(s)} \Big)
	\hktwo(\tau_1,x^{(1)}-x') Z_\ic(x')\,\d x',
\end{align}
where the integral goes over all $ 0<\tau_1<\ldots<\tau_k<t $ and $ x',x^{(1)},\ldots,x^{(k)} \in \R^2 $,
with the convention $ x^{(k+1)}:=x $ and $ \tau_{k+1}:=t $.

From the expression~\eqref{e.chaos} of $ I_{\e,k} $,
it is straightforward to check that, for fixed $ \beta_\e=\beta>0 $ as $ \e\to 0 $,
the variance $ \mathrm{Var}[I_{\e,k}] $ diverges, confirming the breakdown of the standard theory in $ d=2 $.
We hence seek to tune $ \beta_\e \to 0 $ in a way 
so that a meaningful limit of $ Z_\e $ can be observed.
A close examination shows that the divergence of $ \mathrm{Var}[I_{\e,k}] $ originates from the singularity of $ \hktwo(t,0) = (2\pi t)^{-1} $ near $ t=0 $,  
so it is natural to propose 
$ \beta_\e = \frac{\beta_0}{|\log \e|} \to 0 $, $ \beta_0>0 $.
The $ \e\to 0 $ behavior of $ Z_\e $ for small values of $ \beta_0 $ has attracted much attention recently.
For fixed $ \beta_0\in(0,2\pi) $,
\cite{caravenna17} showed that the fluctuations of $ Z_\e(t,\Cdot) $ converge (as a random measure) 
to a Gaussian field, more precisely, the solution of the two-dimensional \ac{EW} equation.
For $ \beta_0=\beta_{0,\e}\to 0 $,
\cite{feng15} showed that the corresponding polymer measure exhibits diffusive behaviors.
The logarithm $ h_\e(t,x):= \beta_\e^{-1/2} \log Z_\e(t,x) $ is also a quantity of interest:  it
describes the free energy of random polymers and the height function in surface growth phenomena 
which solves the two dimensional  \ac{KPZ} equation.
The tightness of the centered height function was obtained in~\cite{chatterjee18}
for small enough $ \beta_0 $.
It was then shown in \cite{caravenna18a} that the centered height function converges to the \ac{EW} equation
for all $ \beta_0\in(0,2\pi) $,
and in \cite{gu18} for small enough $ \beta_0 $, i.e., the limit is Gaussian.

However, the $ \e\to 0 $ behavior of $ Z_\e $ goes through a \emph{transition} at $ \beta_0 =2\pi $. 
Consider the $ n $-th order correlation function of the solution of the mollified \ac{SHE} \eqref{e.she} at a fixed time:
\begin{align}\label{e.mom.semigroup}
	u_\e(t, x_1,\ldots,x_n) := \E\Big[ \prod_{i=1}^n Z_\e(t,x_i) \Big].
\end{align}
By the It\^o calculus, this function satisfies the $ n $ particle (approximate) delta Bose gas
\begin{align}\label{e.dbg}
	\partial_t \, u_\e(t, x_1,\ldots,x_n)
	=
	-\big( \Ho_\e u_\e \big) (t, x_1,\ldots,x_n),
	\quad
	x_i\in\R^2,\quad u_\eps(0)=Z_\ic^{\otimes n},
\end{align}
where $ \Ho_\e $ is the Hamiltonian 
\begin{align}
	\label{e.hame.}
	\Ho_\e
	:=
	-\frac12\sum_{i=1}^n \nabla_i^2 -\beta_\eps\sum_{1\leq i<j\leq n} \delta_\eps(x_i-x_j),
	\qquad
	\delta_\e(x) := \e^{-2} \Phi(\e^{-1}x), \qquad
	\Phi(x) := \int_{\R^2} \varphi(x+y)\varphi(y)\, \d y,
\end{align}
with the shorthand notation $ \nabla^2_i := \nabla^2_{x_i} $.
It can be shown (e.g., from \cite[Equation~(I.5.56)]{albeverio88})
that, for $ n=2 $, the Hamiltonian $ \Ho_\e $ has a vanishing/diverging principal eigenvalue as $ \e\to 0 $, 
respectively for $ \beta_0<2\pi $ and $ \beta_0>2\pi $.
This phenomenon in turn suggests a transition in behaviors of $ Z_\e $ at $ \beta_0=2\pi $.
This transition is also demonstrated at the level of pointwise limit (in distribution) of $ Z_\e(t,x) $ as $ \eps\to0 $ by \cite{caravenna17}.

The preceding observations point to an intriguing question of
understanding the behavior of $ Z_\e $ and $u_\eps$ at this critical value $ \beta_0 =2\pi $. 
For the case of two particles ($ n=2 $),
by separating the center-of-mass and the relative motions, 
the delta Bose gas can be reduced to a system of one particle with a delta potential at the origin.
%
Based on this reduction and the analysis of the one-particle system in \cite[Chapter~I.5]{albeverio88}, 
\cite{bertini98} gave an explicit $ \e\to 0 $ limit of the second order correlation functions (tested against $ \Lsp^2 $ functions).
Further, given the radial symmetry of the delta potential, 
the one particle system (in $ d=2 $) can be reduced to an one-dimensional problem along the radial direction.  Despite its seeming simplicity, this one-dimensional problem already requires sophisticated analysis.  Although the final answer is non-trivial, it does not rule out a lognormal limit.
For $ n>2 $, these reductions no longer exist,
and to obtain information on the correlation functions stands as a challenging problem.  The only prior results are for $ n=3 $.
The work \cite{feng15} showed that for $Z_\e$ the limiting ratio of the cube root of the third pointwise moment to the square root of the second moment is not what one would expect from a lognormal distribution, indicating (but not proving) non-trivial fluctuations.
Using techniques developed in~\cite{caravenna18b} to control the chaos series,
\cite{caravenna18} obtained the convergence of the third order correlations of $ Z_\e $
to a limit given in terms of a sum of integrals.

In this paper, we proceed through a different, functional analytic route, 
and obtain a unified description of the $ \e\to 0 $ limit of all correlation functions of $ Z_\e $. 
We now prepare some notation for stating our main result.
Hereafter throughout the paper, we set
\begin{align}
	\label{e.betaeps}
	\beta_\eps
	:=
	\frac{2\pi}{|\log \eps|} + \frac{2\pi\betaf}{|\log \eps|^2},
\end{align}
where $ \betaf\in\R $ is a fixed, fine-tuning constant. 
This fine-tuning constant does not complicate our analysis, 
though the limiting expressions do depend on $ \betaf $.
Let $ \cEM =0.577\ldots $ denote the Euler--Mascheroni constant,
and, with $ \Phi $ as in~\eqref{e.she} and \eqref{e.hame.}, set
\begin{align}
	\label{e.betaC}
	\betaC := 2 \, ( \log 2 + \betaf - \betaphi - \cEM ), \qquad \betaphi := \int_{\R^4} \Phi(x_1) \log|x_1-x'_1| \Phi(x'_1) \, \d x_1 \d x'_1,
\end{align}
and
\begin{align}
	\label{e.jfn}
	\jfn(t,\betaC) &:= \int_0^\infty \frac{t^{\alpha-1}e^{\betaC\alpha}}{\Gamma(\alpha)} \, \d \alpha.
\end{align}

We will often work with vectors $ x=(x_1,\ldots,x_n) \in \R^{2n} $, where $ x_i\in\R^2 $,
and similarly $ y=(y_2,\ldots,y_n)\in\R^{2n-2} $, $ y_i\in\R^2 $.
We say $ x_i $ is the \textbf{$ i $-th component} of $ x $.
For $n\ge 2$ and $ 1\leq i<j \leq n $, consider the linear transformation $S_{ij}: \R^{2n-2} \to \R^{2n}$ %
that takes the first component of $ \R^{2n-2} $ and repeats it in the $i$-th and $j$-th components of $\R^{2n}$,
\begin{align}
	\label{e.S}
	S_{ij}(y_2,\ldots,y_n)
	:=
	(y_3,\ldots, \underbrace{y_2}_{i\text{-th}},\ldots,\underbrace{y_2}_{j\text{-th}},\ldots, y_n).
\end{align}
This operator $ S_{ij} $ the induces the \emph{lowering} operator $ \So_{ij}:\Lsp^2(\R^{2n})\to\Lsp^2(\R^{2n-2}) $
\begin{align}
	\label{e.So}
	\big( \So_{ij} u \big)(y) := u(S_{ij}y).
\end{align}

Let $ \Hsp^{\alpha}(\R^{2n}) $ denote the Sobolev space of degree $ \alpha\in\R $.
As we will show in Lemma~\ref{l.So},
\eqref{e.So} defines an \emph{unbounded} operator $ \Lsp^2(\R^{2n})\to\Lsp^2(\R^{2n-2}) $, 
and there exists an adjoint  $ \So^*_{ij}: \Lsp^2(\R^{2n-2})\to \cap_{a>1} \Hsp^{-a}(\R^{2n}) $. 
Let $$ \Pt_t := e^{\frac{t}2\sum_{i=1}^n \nabla^2_i}$$ denote the heat semigroup on $ \Lsp^2(\R^{2n}) $; its integral kernel will be denoted $ \hk(t,x) := \prod_{i=1}^n \frac{1}{2\pi t} \exp(-\frac{|x_i|^2}{2t}) $.
Define the operator $\Ptj{t}: \Lsp^2(\R^{2n-2}) \to \Lsp^2(\R^{2n-2})$,\\
\begin{align}
	\label{e.Ptj}
	\Ptj{t} 
	:= 
	\jfn(t,\betaC)e^{\frac{t}4\nabla^2_2+\frac{t}{2}\sum_{i=3}^n\nabla^2_i}.
\end{align}
This operator `squeezes' the first component $ x_1 $ in the heat semigroup and multiplies the result by the function $ \jfn(t,\betaC) $. %
The function is related to the operator $ \Jo_z $ defined later in \eqref{e.Jo}, c.f., Lemma~\ref{l.jfn}, and hence the notation $ \Ptj{t} $. %

We need to prepare some index sets.
Hereafter we write $ i<j $ for a pair of ordered indices in $ \{1,\ldots,n\} $, i.e.,
two elements $ i<j $ of $\{1,\ldots,n\} $.
For $ n,m\in \Z_+ $, 
we consider $ \Vec{(i,j)} = ((i_k,j_k))_{k=1}^m $ such that 
$ (i_{k}<j_{k}) \neq (i_{k+1}<j_{k+1}) $, i.e., $m$ \emph{ordered pairs with consecutive pairs non-repeating}.
Let
\begin{align}
	\label{e.diag.set.m}
	\diag(n,m) 
	&:= 
	\big\{ 
		\Vec{(i,j)} \in (\{1,\ldots,n\}^2)^m
		:
		(i_{k}<j_{k}) \neq (i_{k+1}<j_{k+1})
	\big\},
\\
	\label{e.diag.set}
	\diag(n)
	&:= 
	\bigcup_{m=1}^\infty
	\diag(n,m)
\end{align}
denote the sets of all such indices, with the convention that $ \diag(1,m):=\varnothing $, $ m\in\Z_+ $.
The notation $ \diag(n) $ refers to `diagrams', as will be explained  in Section~\ref{sect.diagram}.
Let
\begin{align}
	\label{e.Sigma.m}
	\Sigma_m(t) := \big\{ \vec{\tau}=(\tau_a)_{a\in\frac12\Z\cap[0,m]} \in \R^{2m+1}_+ : \tau_0+\tau_{1/2}+\ldots+\tau_{m} =t \big\},
\end{align}
so that for a fixed $ t\in\R_+ $, 
the integral $ \int_{\Sigma_m(t)} (\,\Cdot\,) \d\vec{\tau} $ denotes a $ (2m+1) $-fold convolution over the set $ \Sigma_m(t) $.
For a bounded operator $ \geno : \genH \to \genH' $ between Hilbert spaces $ \genH $ and $ \genH' $,
let 
$
	\normo{\geno} := \sup_{\norm{u}_{\genH}=1} \norm{\geno u}_{\genH'} 
$
denote the inherited operator norm.
We use the subscript `op' (standing for `operator') to distinguish the operator norm from the vector norm,
and omit the dependence on $ \genH $ and $ \genH' $, since the spaces will always be specified along with a given operator. 
The $ \Lsp^2 $ spaces in this paper are over $ \C $,
and we write $ \ip{f}{g} := \int_{\R^d} \bar{f(x)} g(x) \, \d x $ for the inner product.
(Note our convention of taking complex conjugate in the first function.)
Throughout this paper we use $ C(a,b,\ldots) $ to denote a generic positive finite constant that may change
from line to line, but depends only on the designated variables $ a,b,\ldots $.
We view the mollifier $ \varphi $ as fixed throughout this paper, so the dependence on $ \varphi $ will not be specified.

We can now state our main result.
\begin{theorem}\label{t.main}
\begin{enumerate}[label=(\alph*), leftmargin=20pt]
\item[] 
\item\label{t.main.diagram}
The operators
\begin{align}
	\label{e.Diagram.op}
	\hspace{.15\textwidth}
	\Pt_t + \Dio^{\diag(n)}_t,
	\hspace{.05\textwidth}
	\Dio^{\diag(n)}_t
	:=
	\sum\nolimits_{\Vec{(i,j)}\in\diag(n)}\Dio^{\Vec{(i,j)}}_t,
	\quad
	t \geq 0,	
\end{align}
define a norm-continuous semigroup on $ \Lsp^2(\R^{2n}) $, where, 
for $ \Vec{(i,j)} = ((i_k,j_k))_{k=1}^m $,
\begin{align}
	\label{e.diagram.op}
	\Dio^{\Vec{(i,j)}}_t
	&:=
	\int_{\Sigma_m(t)}
	\Pt_{\tau_0} \So^*_{i_1j_1} \big( 4\pi\Pt^\Jo_{\tau_{1/2}} \big) 
	\Big( \prod_{k=1}^{m-1} \So_{i_{k}j_{k}} \Pt_{\tau_{k}} \So^*_{i_{k+1}j_{k+1}} \, (4\pi\Pt^\Jo_{\tau_{k+1/2}}) \Big) 
	\So_{i_mj_m} \Pt_{\tau_{m}}
	\
	\d \vec{\tau}.
\end{align}
The sum in~\eqref{e.Diagram.op} converges absolutely in operator norm, uniformly in $ t $ over compact subsets of $ [0,\infty) $.
%
\item\label{t.main.dcnvg}
Start the mollified \ac{SHE}~\eqref{e.she} from $ Z_\e(0,\Cdot) = Z_\ic(\Cdot) \in \Lsp^2(\R^2) $.
For any $ f(x)=f(x_1,\ldots,x_n)\in\Lsp^2(\R^{2n}) $, $ n\in\Z_+ $, we have
\begin{align}
\label{e.mom.cnvg}
\begin{split}
	\E\big[ \ip{f}{Z_{\e,t}^{\otimes n}} \big] :=
	\E\Big[ \int_{\R^{2n}} \bar{f(x)} \prod_{i=1}^n Z_{\e}(t,x_i) \, \d x \Big] 	
	\longrightarrow 
	\IP{ f } 
	{%
		\big( \Pt_t + \Dio^{\diag(n)}_t \big)
	\, 
	 Z_\ic^{\otimes n}
	}
	\quad
	\text{as } \e\to 0,
\end{split}
\end{align}
uniformly in $ t $ over compact subsets of $ [0,\infty) $.
\end{enumerate}
\end{theorem}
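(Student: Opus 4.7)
The plan is to follow the two-step program described in the abstract. For part~\ref{t.main.diagram} I will bound the diagram series in operator norm directly, while for part~\ref{t.main.dcnvg} I will pass through the Laplace transform of $e^{-t\Ho_\e}$, i.e., the resolvent $(z+\Ho_\e)^{-1}$, expand it by a Birman--Schwinger/Dyson series in the pair interactions, perform a ``ladder'' resummation whose $\e\to 0$ limit produces the scalar factor $\jfn(t,\betaC)$ of \eqref{e.jfn}, and match the remainder term-by-term with the Laplace transform of $\Pt_t+\Dio^{\diag(n)}_t$. The semigroup identity in part~\ref{t.main.diagram} can then be obtained a posteriori from part~\ref{t.main.dcnvg} by transferring the semigroup property of $e^{-t\Ho_\e}$ along the convergence; alternatively it admits a direct combinatorial proof based on time-splitting of diagrams.

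For the absolute convergence in part~\ref{t.main.diagram}, the building blocks are $\So_{ij}$, $\So^*_{ij}$, $\Pt_\tau$, and $\Pt^\Jo_\tau$. Using Lemma~\ref{l.So} and working in Fourier space, one obtains an operator-norm bound on the ``edge'' composition $\So_{i_kj_k}\Pt_{\tau_k}\So^*_{i_{k+1}j_{k+1}}: \Lsp^2(\R^{2n-2})\to\Lsp^2(\R^{2n-2})$ under the hypothesis $(i_k<j_k)\ne(i_{k+1}<j_{k+1})$; the non-repeating constraint in $\diag(n,m)$ is essential here, as otherwise the relevant kernel would be too singular at $\tau_k\to 0$ to sustain the subsequent $\tau$-convolution. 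Combined with the scalar bound on $\Pt^\Jo_{\tau_{k-1/2}}$ coming from $4\pi|\jfn(\tau_{k-1/2},\betaC)|$ and integrated over $\Sigma_m(t)$, one obtains an $m$-dependent bound on $\Dio^{\Vec{(i,j)}}_t$ that, after being summed against the at most $(n(n-1)/2)^m$ diagrams of length $m$, converges absolutely, uniformly for $t$ in compact subsets of $[0,\infty)$. Norm continuity in $t$ is then immediate from dominated convergence.

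For part~\ref{t.main.dcnvg}, \eqref{e.dbg} gives $\E[\ip{f}{Z_{\e,t}^{\otimes n}}] = \ip{f}{e^{-t\Ho_\e}Z_\ic^{\otimes n}}$, so it suffices to identify the limit of $e^{-t\Ho_\e}$. Taking the Laplace transform in $t$ and expanding around the free resolvent $R^0(z):=(z-\tfrac12\sum_i\nabla_i^2)^{-1}$,
\[
(z+\Ho_\e)^{-1}
=
R^0(z) + \sum_{m\ge 1}\beta_\e^m \sum_{(i_1<j_1),\ldots,(i_m<j_m)} R^0(z)\prod_{k=1}^m \big(\delta_\e(x_{i_k}-x_{j_k})\,R^0(z)\big),
\]
I then group each summand by its maximal runs of a single repeated pair. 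Inside each run, the geometric series in that single pair must be summed in closed form \emph{before} passing to the limit $\e\to 0$; this is where Dimock's analysis is adapted to our mollifier $\varphi$. Under the tuning \eqref{e.betaeps}, the summed run converges to the scalar $4\pi\,\hat{\jfn}(z,\betaC)=4\pi/(\log z-\betaC)$ acting in the relative coordinate of the pair, with $\betaC$ of \eqref{e.betaC} emerging as the finite residue once the $|\log\e|$ and $|\log\e|^2$ divergences have been cancelled; the constants $\log 2$, $\cEM$, and $\betaphi$ arise from the small-$\tau$ asymptotics of the two-dimensional heat kernel convolved against $\Phi$. The endpoints of each run supply the adjacent operators $\So^*_{ij}$ and $\So_{ij}$, since $\delta_\e(x_i-x_j)$ collapses the two coordinates weakly. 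The remaining outer sum, now indexed by maximal-run pair sequences, is exactly $\diag(n)$, and a term-by-term comparison with the Laplace transform of \eqref{e.diagram.op} yields the identification. Inverting the Laplace transform --- justified by the uniform-in-$\e$ operator bounds supplied by part~\ref{t.main.diagram} applied to $(z+\Ho_\e)^{-1}$ --- gives \eqref{e.mom.cnvg}, uniformly in $t$ on compact subsets of $[0,\infty)$.

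The main obstacle is the ladder resummation at critical coupling. At $\beta_0=2\pi$ the two-body pair resolvent sits exactly at the threshold where its Neumann series fails to converge, so the $\e\to 0$ limit cannot be taken term-by-term; the divergences of successive terms must cancel against the $2\pi\betaf/|\log\e|^2$ correction in \eqref{e.betaeps}, leaving the finite residue $\betaC$. Carrying out this cancellation uniformly in the spectator coordinates $y_3,\ldots,y_n$ --- so as to produce operator convergence, not merely scalar convergence for each spectator configuration --- and with error estimates that remain integrable under the outer convolution over collision times, is the technical heart of the argument, and is precisely where Dimock's resolvent framework must be modified to accommodate the spatial mollifier $\varphi$. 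A secondary difficulty is the unboundedness of $\So^*_{ij}$, which forces every intermediate identity to be read distributionally until it is sandwiched between the smoothing heat operators $\Pt_\tau$ and $\Pt^\Jo_\tau$ that surround it in \eqref{e.diagram.op}.
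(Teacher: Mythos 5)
Your plan is the paper's plan in all essentials: a resolvent expansion of $(\Ho_\e-z)^{-1}$ regrouped by maximal runs of a single repeated pair, resummation of each single-pair ladder \emph{before} taking $\e\to0$ (Dimock's framework adapted to spatial mollification), and identification of the limit with the diagram series by Laplace transform. Your diagnosis of the two main pressure points --- the non-repeating constraint in $\diag(n,m)$ keeps the off-diagonal composites $\So_{i_kj_k}\Pt_{\tau_k}\So^*_{i_{k+1}j_{k+1}}$ bounded, and the ladder resummation at critical coupling must yield operator convergence uniform in the spectator coordinates --- matches what the paper actually establishes in Lemma~\ref{l.off.diagonal} (via the inequality of \cite{dell94}) and Lemmas~\ref{l.auxi}--\ref{l.diagonal}.

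The genuine weak spot is ``inverting the Laplace transform\dots justified by the uniform-in-$\e$ operator bounds supplied by part~\ref{t.main.diagram} applied to $(z+\Ho_\e)^{-1}$.'' Part~\ref{t.main.diagram} concerns the $t$-side operator $\Pt_t+\Dio^{\diag(n)}_t$, not the $\e$-dependent resolvent; and, more seriously, a resolvent decays only like $|z|^{-1}$ along vertical lines, so a direct Bromwich inversion does not converge absolutely and will not transfer convergence from $z$ to $t$. The paper separates this into two distinct steps: (i) strong resolvent convergence $\Ro_{\e,z}\to\Ro_z$ is upgraded to strong semigroup convergence $e^{-t\Ho_\e}\to e^{-t\Ho}$ via the Trotter--Kato theorem and continuity of bounded functional calculus (\cite[Theorem~VIII.20]{reed72}), exploiting the self-adjointness of $\Ho_\e,\Ho$ and a uniform lower spectral bound, with uniformity over compact $t$-intervals coming from a uniform-in-$\e$ Lipschitz estimate on $t\mapsto e^{-t\Ho_\e}$; and (ii) $\Pt_t+\Dio^{\diag(n)}_t$ is identified with $e^{-t\Ho}$ by matching \emph{scalar} Laplace transforms $\lambda\mapsto\int_0^\infty e^{-\lambda t}\ip{u'}{(\Pt_t+\Dio^{\diag(n)}_t)u}\,\d t$ against $\ip{u'}{\Ro_{-\lambda}u}$ and invoking uniqueness of the Laplace transform for exponentially bounded continuous functions. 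Finally, the ``direct combinatorial proof based on time-splitting of diagrams'' you offer for the semigroup property is not free: splitting a diagram at a time $s$ falling inside a double line requires the reproducing identity \eqref{e.the.identity} for $\jfn$, which the paper establishes through a nontrivial $\Gamma$-function identity (Lemma~\ref{l.Gamma}) rather than by any bijection of diagrams. Your alternative of deducing the semigroup property a posteriori from the identification $\Pt_t+\Dio^{\diag(n)}_t=e^{-t\Ho}$ is, however, perfectly valid.
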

%

\begin{remark} 
Since the method is through explicit construction of 
a convergent series for the resolvent on $\Lsp^2(\R^{2n})$, 
our result does not apply to the flat initial condition $ Z_\ic(x)\equiv 1 $.
We conjecture that Theorem~\ref{t.main} extends to such initial data,
and leave this to future work. 
\end{remark}


Theorem~\ref{t.main} gives a complete characterization of the $ \e\to0 $ limit of fixed time, 
correlation functions  of the \ac{SHE} with an $ \Lsp^2 $ initial condition.
We will show in Section~\ref{sect.diagram} that for each $ \Vec{(i,j)}\in\diag(n) $, $ \Dio^{\Vec{(i,j)}} $ possesses an explicit integral kernel.
Hence the limiting correlation functions (i.e., r.h.s.\ of~\eqref{e.mom.cnvg}) can be expressed as a sum of integrals.
From this expression,
we check (in Remark~\ref{rmk.BCmatching}) that for $ n=2 $ our result matches that of \cite{bertini98},
and for $ n=3 $, we derive (in Proposition~\ref{p.3rdmom}) an analogous expression of \cite[Equations~(1.24)--(1.26)]{caravenna18}.

A question of interest arises as to whether one can uniquely characterize the limiting process of $ Z_\e $.
This does not follow directly from correlation functions, or moments,
since we expect a very fast moment growth in $ n $ (see Remark~\ref{rmk.eigenvalue}).
Still, as a simple corollary of Theorem~\ref{t.main},
we are able to infer that every limit point of $ Z_\e $ must have correlation functions given by the r.h.s.\ of \eqref{e.mom.cnvg}.
The corollary is mostly concretely stated in terms of the vague topology of measures,
or equivalently testing measures against compactly supported continuous functions.
One could generalize to $ \Lsp^2 $ test functions but we do not pursue this here.
\begin{corollary}\label{c.main}
Let $ Z_\ic $ and $ Z_{\e}(t,x) $ be as in Theorem~\ref{t.main},
and, for each fixed $ t $, view $ \mu_{\e,t}(\d x) := Z_{\e}(t,x) \d x $ as a random measure.
Then, for any fixed $ t\in\R_{+} $, 
the law of $ \{\mu_{\e,t}(\d x) \}_{\e\in(0,1)} $ is tight in the vague topology,
and, for any limit point $ \mu_{*,t}(\d x) $ of $ \{\mu_{\e,t}(\d x) \}_{\e\in(0,1)} $,
and for any compactly supported, continuous $ f_1,\ldots,f_n \in\Csp_\mathrm{c}(\R^2) $, $ n\in\Z_+ $, 
\begin{align}
	\label{e.limit.point.mom}
	\E\Big[ \prod_{i=1}^n \int_{\R^2} \bar{ f_i(x_i) } \mu_{*,t}(\d x_i)  \Big] 
	=
	\IP{ f_1\otimes\cdots\otimes f_n } 
	{%
		\big( \Pt_t + \Dio^{\diag(n)}_t \big)
	\, 
		Z_\ic^{\otimes n}
	}.
\end{align}
Furthermore, if $ Z_\ic(x), f(x) \geq 0 $ are nonnegative and not identically zero, then
\begin{align}
	\label{e.nonGaussian}
	\E\Big[ \Big( \int_{\R^2} f(x) \mu_{*,t}(\d x) - \E\Big[\int_{\R^2} f(x) \mu_{*,t}(\d x)\Big] \Big)^3 \Big] 
	>0.
\end{align}
\end{corollary}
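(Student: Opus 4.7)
The plan is to treat the three assertions in sequence: vague tightness, identification of moments of any limit point, and strict positivity of the third centered moment. For tightness, I would note that for any $f\in\Csp_\mathrm{c}(\R^2)$, Theorem~\ref{t.main}\ref{t.main.dcnvg} applied to $f\otimes f\in\Lsp^2(\R^4)$ gives $\sup_\e\E[\mu_{\e,t}(f)^2]<\infty$; Chebyshev's inequality then yields tightness of $\{\mu_{\e,t}(f)\}_\e$ for each such $f$, which is equivalent to vague tightness of the random measures $\{\mu_{\e,t}\}_\e$.

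For the moment formula~\eqref{e.limit.point.mom}, fix $f_1,\ldots,f_n\in\Csp_\mathrm{c}(\R^2)$ and set $f:=f_1\otimes\cdots\otimes f_n\in\Lsp^2(\R^{2n})$; Theorem~\ref{t.main}\ref{t.main.dcnvg} then gives
\begin{align*}
	\E\Big[\prod_{i=1}^n\int \bar{f_i}\,\d\mu_{\e,t}\Big]
	=\E\big[\ip{f}{Z_{\e,t}^{\otimes n}}\big]
	\;\longrightarrow\;
	\IP{f}{(\Pt_t+\Dio^{\diag(n)}_t)Z_\ic^{\otimes n}}.
\end{align*}
Along a subsequence $\e_k\to 0$ realizing the limit point $\mu_{*,t}$, vague convergence of measures together with continuity of $\mu\mapsto\prod_i\int\bar{f_i}\,\d\mu$ on $\Csp_\mathrm{c}$-test functions yields convergence in distribution of the product on the left. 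To upgrade to convergence in mean, I would apply Theorem~\ref{t.main}\ref{t.main.dcnvg} once more to a suitable $2n$-fold $\Lsp^2$-tensor of the $f_i$ and $\bar{f_i}$, obtaining a uniform bound on $\E[|\prod_i\int\bar{f_i}\,\d\mu_{\e,t}|^2]$ and hence uniform integrability; combining with the distributional convergence gives~\eqref{e.limit.point.mom}.

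For~\eqref{e.nonGaussian}, set $A:=\ip{f}{\Pt_t Z_\ic}$ and $\mu:=\int f\,\d\mu_{*,t}$. Since $\Pt_t$ on $\Lsp^2(\R^{2k})$ is the tensor power of the one-particle heat semigroup, $\ip{f^{\otimes k}}{\Pt_t Z_\ic^{\otimes k}}=A^k$; hence $\E[\mu]=A$ (as $\diag(1)=\varnothing$) and $\E[\mu^2]=A^2+C_2$ with $C_2:=\ip{f^{\otimes 2}}{\Dio^{((1,2))}_t Z_\ic^{\otimes 2}}$, the only diagram in $\diag(2)$. For $n=3$, each length-$1$ diagram $((i,j))$ factorizes across the spectator particle: the spectator sees only $\Pt_{\tau_0}$, the $e^{(\tau_{1/2}/2)\nabla^2}$ factor in $\Pt^\Jo_{\tau_{1/2}}$, and $\Pt_{\tau_1}$, which compose to the one-particle heat semigroup for total time $\tau_0+\tau_{1/2}+\tau_1=t$ independently of $\vec\tau$; the three length-$1$ diagrams therefore contribute $3AC_2$ in total. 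Writing $D_3$ for the sum over diagrams in $\diag(3)$ of length $\geq 2$, one obtains $\E[\mu^3]=A^3+3AC_2+D_3$, and the expansion $\E[(\mu-A)^3]=\E[\mu^3]-3A\E[\mu^2]+3A^2\E[\mu]-A^3$ collapses to $\E[(\mu-A)^3]=D_3$.

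The main obstacle is thus to show $D_3>0$, which I would establish by positivity. Every building block of $\Dio^{\Vec{(i,j)}}_t$ is positivity-preserving: the heat kernels are strictly positive, $\jfn(t,\betaC)>0$ by inspection of~\eqref{e.jfn}, and each $\So^*_{ij}$ is (formally) multiplication by the positive distribution $\delta(x_i-x_j)$, whose composition with the surrounding heat semigroups yields an operator with a strictly positive continuous integral kernel. Hence every summand in $D_3$ is nonnegative when $f,Z_\ic\geq 0$. For strict positivity I would isolate the length-$2$ diagram $((1,2),(1,3))$: its kernel on $\R^6\times\R^6$ is a $\vec\tau$-integral of products of strictly positive heat kernels and the strictly positive factors $\jfn(\tau_{k-1/2},\betaC)$, hence strictly positive pointwise, so $\ip{f^{\otimes 3}}{\Dio^{((1,2),(1,3))}_t Z_\ic^{\otimes 3}}>0$ whenever $f$ and $Z_\ic$ are nonnegative and not identically zero. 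This yields $D_3>0$ and hence~\eqref{e.nonGaussian}.
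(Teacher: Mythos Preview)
Your proposal is correct and follows essentially the same route as the paper: tightness via a uniform second-moment bound from Theorem~\ref{t.main}, uniform integrability via the $2n$-th moment bound to pass to limit points, and the third-centered-moment computation via expansion and cancellation of the degenerate (length-$1$) diagrams, leaving the strictly positive contribution from $\diag'(3)$. The only cosmetic difference is that the paper packages your third-moment cancellation as a separate statement (Proposition~\ref{p.3rdmom}) and is slightly more explicit about using $|Z_\ic|$ and $|f_i|$ when invoking Theorem~\ref{t.main} for the second-moment/UI bounds, which you may want to make precise in the complex-valued case.
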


Due to the critical nature of our problem, as $ \e \to 0 $
the moments go  through a non-trivial transition as $ \beta_0$ passes through $2\pi $.
To see this, in~\eqref{e.chaos.expansion},
use the orthogonality $ \E[I_{\e,k}(t,x_1)I_{\e,k'}(t,x_2)]=0 $, $ k\neq k' $, to express the second ($ n=2 $) moment as
\begin{align*}
	\E\Big[ \Big(\int_{\R^2} Z_\e(t,x) f(x) \d x\Big)^2 \Big]
	=
	\int_{\R^8} \prod_{i=1}^2 \hktwo(t,x_i-x'_i) f(x_i) Z_\ic(x'_i)\,\d x'_i \d x_i
	+
	\sum_{k=1}^\infty \int_{\R^4} \E\Big[ \prod_{i=1}^2 I_{\e,k}(t,x_i) f(x_i) \Big] \d x_1\d x_2.
\end{align*}
As seen in \cite{caravenna18},
the major contribution of the sum spans across a \emph{divergent} number of terms ---
across all $ k $'s of order $ |\log\e| \to \infty $.
We are probing a regime where the limiting process `escapes' to indefinitely high order chaos as $\eps\to0$, reminiscent of the large time behavior of the \ac{SHE}/KPZ equation in $d=1$.

Because of this,
obtaining the $ \e\to 0 $ limit from chaos expansion requires elaborate and delicate analysis.
In fact, just to obtain an $ \e $-independent bound (for fixed $ Z_\ic $ and test functions $ f_i $'s) from the chaos expansion is a challenging task.
Such analysis is carried out for $ n=2,3 $ in \cite{caravenna18} 
(in a discrete setting and in the current continuum setting, both with $ Z_\ic\equiv 1 $).  
Here, we progress through a different route.  
From \eqref{e.mom.semigroup}, \eqref{e.dbg}, and \eqref{e.hame.} obtaining the limit of the correlation functions 
is equivalent to obtaining the limit of the semigroup $ e^{-t\Ho_\e} $,
which reduces to the study of $ \Ho_\e $ itself, or its resolvent.

The delta Bose gas enjoys a long history of study, 
motivated in part by phenomena such as unbounded ground-state energy and infinite discrete spectrum observed in $ d=3 $.
We do not survey the literature here, and refer to the references in \cite{albeverio88}.
Of most relevance to this paper is the work~\cite{dimock04},
which studied  $ d=2 $  with a momentum cutoff, 
and established the convergence of the resolvent of the Hamiltonian to an explicit limit~\cite[Equation~(90)]{dimock04}.
%
Here, we follow the framework of~\cite{dimock04},
but instead of the momentum cutoff, we work with the space-mollification scheme as in~\eqref{e.hame.},
in order to connect the delta Bose gas to the \ac{SHE}.

Hereafter we always assume $ n\geq 2 $, since the $ n=1 $ case of Theorem~\ref{t.main} is trivial.
We write $ \Io $ for the identity operator in Hilbert spaces.
For $ z\in\C\setminus[0,\infty) $, let
\begin{align}
	\label{e.Green}
	\Go_z 
	:= \Big( -\frac12 \sum_{i=1}^n \nabla^2_i - z\Io \Big)^{-1}
\end{align}
denote the resolvent of the free Laplacian in $ \R^{2n} $. Let $ \Jo_{z} $ be the unbounded operator $ \Lsp^2(\R^{2n-2})\to \Lsp^2(\R^{2n-2}) $ defined via its Fourier transform
\begin{align}
	\label{e.Jo}
	\hat{ \Jo_{z}v }(p_{2-n}) := \log(\tfrac12|p|^2_{2-n}-z) \hat{v}(p_{2-n}),
\end{align} 
where $ p_{2-n} :=(p_2,\ldots,p_n)\in\R^{2n-2} $ 
and
\begin{align*}
	|p|^2_{2-n} := \tfrac12|p_2|^2+|p_3|^2+\ldots+|p_n|^2,
\end{align*}
with domain
$
	\Dom(\Jo_{z}) := \{ v\in\Lsp^2(\R^{2n-2}) : \int_{\R^{2n}} \big| \hat{v}(p_{2-n})\log(|p|^2_{2-n}+1) \big|^2 \d p_{2-n} < \infty \}$.

Let $ \Lsp^{2}_\sym(\R^{2n}) $ denote the subspace of $ \Lsp^2(\R^{2n}) $ consisting of functions symmetric in the $ n $-components,
i.e., $ u(x_1,\ldots,x_n)=u(x_{\sigma(1)},\ldots,x_{\sigma(n)}) $, for all permutation $ \sigma\in\mathbb{S}_n $.
Recall $ \betaC $ and $ \betaf $ from~\eqref{e.betaC}.
As the major step toward proving Theorem~\ref{t.main}, in Sections~\ref{sect.resolvent1}--\ref{sect.resolcnvg}, we show
\begin{proposition}[Limiting resolvent]
\label{p.resolvent}
There exists $ C<\infty $ such that, for $ z\in\C $ with $ \Re(z) < -e^{Cn^2+\betaC} $, 
\begin{enumerate}[label=(\alph*), leftmargin=20pt]
\item\label{p.resolvent.}
	the following defines a bounded operator on $ \Lsp^2(\R^{2n})\to\Lsp^2(\R^{2n}) $:
	\begin{align}
	\label{e.resolvent}
		\Ro_{z}
		&=
		\Go_{z} 
		+ 
		\sum_{m=1}^\infty \ \sum_{\Vec{(i,j)}\in\diag(n,m)}
	\Go_z \So^{*}_{i_1j_1} 
		\, 
		\big( 4\pi(\Jo_{z}-\betaC\Io)^{-1} \big) \, \prod_{s=2}^{m} \Big(\So_{i_{s-1}j_{s-1}}\Go_z\So^*_{i_sj_s} \, \big( 4\pi(\Jo_{z}-\betaC\Io)^{-1} \big) \Big) 
		\, 
		\So_{i_{m}j_{m}}\Go_z,
	\end{align}
	where the sum converges absolutely in operator norm;
\item \label{p.resolvent.sym}
	when restricted to $ \Lsp^2_\sym(\R^{2n}) $, the operator takes a simpler form,
	\begin{align}
	\begin{split}
		\label{e.resolvent.sym}
		&\Ro^\sym_{z}
		:=
		\Go_{z} 
	+ 
		\frac{2}{n(n-1)}\Big( \sum_{i<j} \Go_z\So^*_{ij} \Big) 
		\, 
		\Big( \frac{1}{4\pi} \big( \Jo_{z} - \betaC \Io \big) - \frac{2}{n(n-1)} \dsum \So_{ij}\Go_z\So^*_{k\ell} \Big)^{-1} 
		\, 
		\Big( \sum_{i<j} \So_{ij}\Go_z \Big).
	\end{split}
	\end{align}
	The sum $ \dsum $ is over distinct pairs $ (i<j)\neq (k<\ell) $.
\end{enumerate}
\end{proposition}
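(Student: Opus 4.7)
My plan is to derive the series \eqref{e.resolvent} by applying a diagrammatic resummation to the Dyson expansion of $(\Ho_\e-z)^{-1}$ and then verify convergence of the resulting series independently. Iterating the second resolvent identity yields
\[
(\Ho_\e-z)^{-1} = \Go_z + \sum_{N\geq 1}\sum_{((i_k,j_k))_{k=1}^N}\Go_z\prod_{k=1}^N\bigl(\beta_\e\delta_\e(x_{i_k}{-}x_{j_k})\Go_z\bigr),
\]
where the outer sum is over all ordered pair sequences of length $N$. Grouping maximal runs of identical consecutive pairs converts each run of length $k$ at pair $(i,j)$ into a geometric sub-sum. Applying the push-through identity and the formal factorization $\delta^{ij}_\e\leftrightarrow\So^*_{ij}\So_{ij}$ (which becomes rigorous in the $\e\to 0$ limit), each run collapses to
\[
\So^*_{ij}\bigl(\beta_\e^{-1}-\So_{ij}\Go_z\So^*_{ij}\bigr)^{-1}\So_{ij}\Go_z,
\]
and the surviving outer sum runs precisely over non-repeating pair sequences, i.e.\ over the index set $\diag(n,m)$.

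The crucial step is the $\e\to 0$ limit of the renormalized self-energy. In Fourier variables, $\So_{ij}\Go_z\So^*_{ij}$ is a multiplier with symbol $\int_{\R^2}(|r|^2+\tfrac12|p|^2_{2-n}-z)^{-1}\,\d r$; with the $\varphi$-mollification, this integral expands as $2\pi|\log\e|-\pi\log(\tfrac12|p|^2_{2-n}-z)+C_\varphi+o(1)$, where $C_\varphi$ collects explicit constants involving $\betaphi$, $\cEM$, and $\log 2$ (the latter two from the standard expansion of the 2D Bessel-type Green's function, the first from the mollifier correction). The tuning \eqref{e.betaeps} is designed precisely so that the divergent $|\log\e|$ contributions in $\beta_\e^{-1}$ and in $\So_{ij}\Go_z\So^*_{ij}$ cancel, leaving the finite constants to assemble into $\betaC$ as in \eqref{e.betaC}. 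A direct computation then yields
\[
\bigl(\beta_\e^{-1}-\So_{ij}\Go_z\So^*_{ij}\bigr)^{-1} \xrightarrow{\e\to 0} 4\pi(\Jo_z-\betaC\Io)^{-1},
\]
the Dimock--Rajeev renormalization \cite{dimock04} adapted to the spatial-mollification setting.

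To prove part \ref{p.resolvent.}, I would verify absolute convergence of the series directly, using three Fourier-side bounds valid for $\Re(z)$ sufficiently negative: (i) $\normo{\Go_z\So^*_{ij}}=\normo{\So_{ij}\Go_z}\leq C_1$, from the two-degree smoothing of $\Go_z$ against the single-codimension singularity of $\So^*_{ij}$; (ii) for $(i<j)\neq(k<\ell)$, $\normo{\So_{ij}\Go_z\So^*_{k\ell}}\leq C_2$ uniformly in the pair choice, since the defining fiber integral avoids both pair-diagonals and so converges without renormalization; and (iii) $\normo{(\Jo_z-\betaC\Io)^{-1}}\leq(\log(-\Re z)-\betaC)^{-1}$ from the symbol bound. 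Combined with the count $|\diag(n,m)|\leq\binom{n}{2}(\binom{n}{2}-1)^{m-1}\leq n^{2m}$, the sum of $m$-diagrams in \eqref{e.resolvent} is bounded in operator norm by a constant times $(4\pi C_2 n^2/(\log(-\Re z)-\betaC))^m$, which is summable once $\Re(z)<-e^{Cn^2+\betaC}$ for a suitable universal $C$. For part \ref{p.resolvent.sym}, permutation symmetry reduces the block operator $(\So_{ij}\Go_z\So^*_{k\ell})_{(i<j),(k<\ell)}$ on pair-symmetric inputs to the scalar operator $\tfrac{2}{n(n-1)}\dsum\So_{ij}\Go_z\So^*_{k\ell}$, so the diagrammatic sum becomes a single Neumann series that closes into \eqref{e.resolvent.sym}.

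The main obstacle, and the step on which I would spend the most care, is the pair-uniform bound in (ii): it governs the quantitative threshold $-e^{Cn^2+\betaC}$ on $\Re(z)$ and requires sharp Fourier analysis of $\So_{ij}\Go_z\So^*_{k\ell}$ on the fiber geometry determined by two distinct pair-diagonals, strong enough to beat the crude combinatorial count $n^{2m}$. The renormalization computation, by contrast, reduces to a one-variable log-divergent integral with explicit $\varphi$-dependent corrections and is much more straightforward once the algebraic push-through is in place.
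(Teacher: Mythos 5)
Your proposal takes essentially the same route as the paper: both prove part~\ref{p.resolvent.} by combining the three operator-norm bounds --- on $\So_{ij}\Go_z$ (Lemma~\ref{l.Goin}), on the off-diagonal $\So_{ij}\Go_z\So^*_{k\ell}$ for distinct pairs via the Dell'Antonio-type inequality (Lemma~\ref{l.off.diagonal}), and on $(\Jo_z-\betaC\Io)^{-1}$ from its symbol --- with the count $|\diag(n,m)|\leq\binom{n}{2}(\binom{n}{2}-1)^{m-1}$ to get geometric convergence once $\log(-\Re z)-\betaC$ dominates $Cn^2$; and part~\ref{p.resolvent.sym} by expanding the inverse in \eqref{e.resolvent.sym} as a Neumann series via \eqref{e.inverse.id} and using permutation symmetry on $\Lsp^2_\sym$ to collapse the double-indexed sum to the single-indexed $\diag(n,m)$ sum. (Your opening discussion of the Dyson expansion, run-resummation, and the $|\log\e|$ cancellation concerns the derivation of the formula and the proof of Theorem~\ref{t.resolcnvg}, not Proposition~\ref{p.resolvent} per se, but the argument you give for parts~\ref{p.resolvent.} and~\ref{p.resolvent.sym} matches the paper's.)
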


\begin{remark}
The leading term $\frac{2\pi}{|\log \eps|} $ of $\beta_\eps$ in \eqref{e.betaeps} is easily seen to arise from the divergence in $\So_{ij}\Go_z\So^*_{ ij}$ when we replace $\So_{ ij}$  by approximate
versions  $\So_{\e ij}$.  See the discussion following \eqref{e.Pro}.
\end{remark}

\begin{theorem}[Convergence of the resolvent]
\label{t.resolcnvg}
There exist constants $ C_1, C_2(\betaf)<\infty $, where $ C_1 $ is universal while $ C_2(\betaf) $ depends only on $ \betaf $,
such that for all $ \e\in(0,1/C_2) $, for $ z\in\C $ with $ \Re(z) < -e^{C_1n^2+\betaC} $, and for $\Ho_{\e}$ defined in \eqref{e.hame.},
\begin{enumerate}[label=(\alph*), leftmargin=20pt]
\item\label{t.resolcnvg.e}
	$ (\Ho_{\e} -z) $ has a bounded inverse $ \Lsp^2(\R^{2n})\to\Lsp^2(\R^{2n}) $;
\item \label{t.resolcnvg.cnvg}$
	\Ro_{\e,z} 
	:=
	(\Ho_{\e} -z)^{-1}
	\longrightarrow
	\Ro_{z}$
	 strongly on $\Lsp^2(\R^{2n})$, as $\eps\to 0$.
\end{enumerate}
\end{theorem}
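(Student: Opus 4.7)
The plan is to expand $\Ro_{\e,z}$ as a convergent series whose terms match those of $\Ro_z$ in~\eqref{e.resolvent}, and then pass to the $\e\to 0$ limit term by term. Starting from the formal Neumann expansion $(\Ho_\e-z)^{-1}=\sum_{m\geq 0}\Go_z(V_\e\Go_z)^m$ with $V_\e:=\beta_\e\sum_{i<j}\delta_\e(x_i-x_j)$, I would use the factorization $\delta_\e(x_i-x_j)=\So_{\e,ij}^*\So_{\e,ij}$ provided by a mollified analog of the lowering operator~\eqref{e.So}, and then regroup the series by pair sequences $((i_1,j_1),\ldots,(i_m,j_m))$, summing each contiguous run of a single pair $(i,j)$ as a geometric series in the operator $\beta_\e\So_{\e,ij}\Go_z\So_{\e,ij}^*$ on $\Lsp^2(\R^{2n-2})$. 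Each such run contracts to
\begin{align*}
K_{\e,z,ij}\ :=\ \bigl(\beta_\e^{-1}\Io-\So_{\e,ij}\Go_z\So_{\e,ij}^*\bigr)^{-1},
\end{align*}
so that the outer sum is indexed precisely by diagrams $\vec{(i,j)}\in\diag(n)$, producing the structure of~\eqref{e.resolvent} with $K_{\e,z,ij}$ playing the role of $4\pi(\Jo_z-\betaC\Io)^{-1}$. This regrouping is what converts the $\e$-singular Neumann expansion into an $\e$-uniformly convergent series.

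The core technical step is the $\e\to0$ asymptotic of $\So_{\e,ij}\Go_z\So_{\e,ij}^*$. A Fourier-space computation, separating the center-of-mass and relative coordinates of the pair $(i,j)$ and expanding the resulting integral built from $\widehat{\Phi}$ and the free propagator symbol, should yield
\begin{align*}
\So_{\e,ij}\Go_z\So_{\e,ij}^*
\ =\
\tfrac{|\log\e|}{2\pi}\,\Io
\ +\
\tfrac{1}{2\pi}(\log 2-\betaphi-\cEM)\,\Io
\ -\
\tfrac{1}{4\pi}\Jo_z
\ +\
o(1)
\end{align*}
in a suitable strong sense on a dense core. Combined with $\beta_\e^{-1}=\tfrac{|\log\e|}{2\pi}-\tfrac{\betaf}{2\pi}+O(|\log\e|^{-1})$ read off from~\eqref{e.betaeps}, the divergent $|\log\e|$ contributions cancel exactly and the finite remainder reassembles into $\beta_\e^{-1}\Io-\So_{\e,ij}\Go_z\So_{\e,ij}^*\to \tfrac{1}{4\pi}(\Jo_z-\betaC\Io)$ with $\betaC$ precisely as in~\eqref{e.betaC}, hence $K_{\e,z,ij}\to 4\pi(\Jo_z-\betaC\Io)^{-1}$ strongly. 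In parallel, for $(i,j)\neq(k,\ell)$ the off-diagonal compositions $\So_{\e,ij}\Go_z\So_{\e,k\ell}^*$ stay uniformly bounded and converge strongly to $\So_{ij}\Go_z\So_{k\ell}^*$, the smoothing by $\Go_z$ being what tames the unbounded adjoint.

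To close the argument I would impose the uniform operator bound valid for $\Re(z)<-e^{C_1n^2+\betaC}$. Since $\Jo_z-\betaC\Io$ is a Fourier multiplier with symbol $\log(\tfrac12|p|^2_{2-n}-z)-\betaC\geq \log(-\Re z)-\betaC$, its inverse has operator norm at most $(\log(-\Re z)-\betaC)^{-1}$, so $\|K_{\e,z,ij}\|_{\mathrm{op}}$ is uniformly small once $-\Re z>e^{C_1n^2+\betaC}$; together with the combinatorial count $|\diag(n,m)|\leq\binom{n}{2}^m$ and uniform bounds on $\Go_z\So_{\e,ij}^*$ and $\So_{\e,ij}\Go_z$, the diagram series then converges absolutely in operator norm, uniformly in $\e$ small. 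The identity $(\Ho_\e-z)\Ro_{\e,z}=\Io$, read off from the contracted series, delivers the bounded inverse of part~\ref{t.resolcnvg.e}, while a dominated-convergence argument, using the strong convergence of each $K_{\e,z,ij}$ and each off-diagonal factor against the uniform majorant from the operator norm bounds, transfers term-by-term convergence to strong convergence of the full series, giving part~\ref{t.resolcnvg.cnvg}. The main obstacle is the Fourier asymptotic of the second paragraph: extracting the exact constant $\betaC$, with the mollifier-dependent $\betaphi$ and the universal pieces $\log 2$ and $\cEM$, demands a careful expansion that correctly isolates the divergent $\tfrac{|\log\e|}{2\pi}\Io$ from the finite log-symbol $-\tfrac{1}{4\pi}\Jo_z$, and this is the unique point in the proof where the fine-tuning constant $\betaf$ from~\eqref{e.betaeps} enters.
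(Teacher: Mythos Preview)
Your overall strategy---Neumann expansion, regrouping by pair sequences, geometric resummation of diagonal runs into $(\beta_\e^{-1}\Io-\cdot)^{-1}$, then termwise $\e\to0$ limit---is exactly the paper's route (Lemma~\ref{l.resolvent.e} and Section~\ref{sect.resolcnvg}). But your treatment of the diagonal piece contains a genuine gap that hides the main technical point specific to space mollification.

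You assert a factorization $\delta_\e(x_i-x_j)=\So_{\e,ij}^*\So_{\e,ij}$ with $\So_{\e,ij}:\Lsp^2(\R^{2n})\to\Lsp^2(\R^{2n-2})$, and then an asymptotic $\So_{\e,ij}\Go_z\So_{\e,ij}^*=\tfrac{|\log\e|}{2\pi}\Io+\text{(finite)}-\tfrac{1}{4\pi}\Jo_z+o(1)$ on $\Lsp^2(\R^{2n-2})$. Neither holds as stated. Under space mollification (unlike the momentum cutoff of \cite{dimock04}) there is no factorization of the multiplication operator $\delta_\e(x_i-x_j)$ through operators that lower the dimension: the paper's $\So_{\e ij}$ is a change of variables \emph{on} $\Lsp^2(\R^{2n})$, and the factorization reads $\delta_\e(x_i-x_j)=\So_{\e ij}^*\phi\,\phi\,\So_{\e ij}$ with $\phi=\sqrt{\Phi}$ multiplying in a residual variable $y_1$ (see~\eqref{e.DoSS} and the remark following it). The diagonal operator $\phi\So_{\e 12}\Go_z\So_{\e 12}^*\phi$ therefore acts on $\Lsp^2(\R^{2n})$, and its divergent part is $\tfrac{|\log\e|}{2\pi}\Pro_\phi$, where $\Pro_\phi$ is the rank-one projection onto $\phi$ in the $y_1$ variable---not $\tfrac{|\log\e|}{2\pi}\Io$. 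The cancellation with $\beta_\e^{-1}\Io$ thus occurs only on $\Img(\Pro_\phi)$; on the orthogonal complement $\beta_\e^{-1}$ survives uncancelled and drives the inverse to zero (Lemmas~\ref{l.diagonal.fixed}--\ref{l.diagonal}). This two-subspace mechanism is precisely what produces $4\pi(\Jo_z-\betaC\Io)^{-1}$ in the limit, and it is also where $\betaphi$ actually enters: not as a scalar in a Fourier expansion, but via $\Pro_\phi\logo_\phi\Pro_\phi=\tfrac{\betaphi}{2\pi}\Pro_\phi$ for the integral operator $\logo_\phi$ in~\eqref{e.logo}, coming from the $\log|y_1-y_1'|$ singularity of the two-dimensional Green function. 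Your displayed asymptotic is what one obtains \emph{after} projecting onto $\Img(\Pro_\phi)$, so the conclusion you reach is correct, but the operators as you have defined them do not exist, and the argument you sketch would not by itself justify the passage to $\Lsp^2(\R^{2n-2})$.
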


\begin{remark}
In stating and proving Proposition~\ref{p.resolvent} and Theorem~\ref{t.resolcnvg} we have highlighted the dependence on $ \betaf $. %
For the purpose of this paper, keeping the dependence is unnecessary (since $ \betaf $ can be fixed throughout), %
but we choose to do so for its potential future applications. %
\end{remark}

\begin{remark}
\label{rmk.eigenvalue}Given Theorem~\ref{t.resolcnvg},
by the Trotter--Kato Theorem, c.f., \cite[Theorem~VIII.22]{reed72}, 
there exists an (unbounded) self-adjoint operator $ \Ho $ on $ \Lsp^2(\R^{2n}) $, \emph {the limiting Hamiltonian},
such that $ \Ro_z =(\Ho-z\Io)^{-1} $, $ \Im(z)\neq 0 $. 
As implied by Theorem~\ref{t.resolcnvg}, the spectra of $ \Ho_\e $ and $ \Ho $ are bounded below by $ -e^{Cn^2+\betaC} $.
Such a bound is first obtained under the momentum cutoff in \cite{dell94}.
The prediction \cite{rajeev99}, based on a non-rigorous mean-field analysis, is that the lower end of the spectrum of $ \Ho $ should approximate $ -e^{c_\star n } $, for some $ c_\star\in(0,\infty) $ that depends on $ \betaf $.
\end{remark}

\begin{remark}\label{remheur}
One can match $ e^{-t\Ho} $ to the operator $ \Pt_t+\Dio^{\diag(n)}_t $ on r.h.s.\ of~\eqref{e.mom.cnvg}
\emph{heuristically} by taking the inverse Laplace transform of $ \Ro_z $ in~\eqref{e.resolvent} in $ z $.
At a formal level, doing so turns the operators $ \Go_{\Cdot} $ and $ (\Jo_{{\Cdot}}-\betaC\Io)^{-1} $ into $ \Pt_{\Cdot} $ and $ \Pt^\Jo_{\Cdot} $ respectively,
and the products of operators in $ z $ become the convolutions in $ t $.
\end{remark}

\begin{remark}
It is an interesting question whether the resolvent method, which is applied to the critical window in this paper, also applies to the subcritical regime $ \beta_0<2\pi $. %
In the subcritical regime, it is the \emph{fluctuations} $ |\log\e|^{1/2}(Z_\e-1) $ that converge to the \ac{EW} equation, as shown in \cite{caravenna17} using a chaos expansion. %
In order to apply the resolvent method, one needs to center and scale the correlation functions \eqref{e.mom.semigroup}. %
The result on the convergence of the two point correlation function is a straightforward application of the resolvent method. Analyzing the higher order correlation functions under such centering and scaling is an interesting open question. %
\end{remark}

\begin{remark}[\ac{SHE} in $ d\geq 3 $]
In higher dimensions $d\geq 3$, the appropriate tuning parameter is $\beta_\eps=\beta_0\eps^{d-2}$. 
For small $\beta_0$, the studies on the \ac{EW}-equation limit of the SHE/KPZ equation include \cite{magnen2018scaling,gu2018edwards,dunlap2018fluctuations}, 
and results on the pointwise fluctuations of $Z_\eps$ and the phase transition in $\beta_0$ can be found in \cite{mukherjee2016weak,comets2017rate,comets2018fluctuation,comets2019renormalizing,cosco2019gaussian}. 
For discussions on directed polymers in a random environment,
we refer to \cite{comets17} and the references therein.
\end{remark}

\subsection*{Outline}
In Section ~\ref{sect.diagram} we give an explicit expression for the limiting semigroup in terms of diagrams and use this to derive Corollary ~\ref{c.main} from Theorem ~\ref{t.main}.
In Section~\ref{sect.resolvent1}, we derive the key expression \eqref{e.resolvent.e} for the resolvent $ \Ro_{\e,z} $,
 which allows the limit to be taken term by term: The limits are obtained in 
Sections \ref{sect.in.out} through \ref{sect.diagonal}, and these are used in  Section
\ref{sect.resolcnvg} to
prove  Proposition~\ref{p.resolvent}\ref{p.resolvent.}--\ref{p.resolvent.sym}, Theorem~\ref{t.resolcnvg}\ref{t.resolcnvg.e}--\ref{t.resolcnvg.cnvg} and the convergence part of Theorem~\ref{t.main}\ref{t.main.dcnvg}.
In Section~\ref{sect.laplace}, we complete the proof of Theorem~\ref{t.main} by constructing the semigroup and matching its Laplace transform to the limiting resolvent $ \Ro_z $.

\subsection*{Acknowledgment}
We thank Davar Khoshnevisan, Lawrence Thomas, and Horng-Tzer Yau for useful discussions.
YG was partially supported by the NSF through DMS-1613301/1807748 and the Center for Nonlinear Analysis of CMU.
JQ was supported by an NSERC Discovery grant.
LCT was partially supported by a Junior Fellow award from the Simons Foundation,
and by the NSF through DMS-1712575. 

\section{Diagram expansion}
\label{sect.diagram}
In this section, we give an explicit integral kernel $ \dker^{\Vec{(i,j)}}(t,x,x') $ 
of the operator $ \Dio^{\Vec{(i,j)}}_t $ in Theorem~\ref{t.main}.
and show how the kernel $ \dker^{\Vec{(i,j)}}(t,x,x') $  can be encoded in terms of diagrams.  This is then used to show how Corollary ~\ref{c.main} follows from
Theorem~\ref{t.main}.
%
%
%
%
The operators $\So_{ij}\Pt_t$, $\Pt_t\So_{ij}^*$ and $\So_{ij}\Pt_t\So_{k\ell}^*$ have integral kernels
\begin{align}
	\label{e.Pt.in.ker}
	&\big(\So_{ij}\Pt_t u\big)(y) = \int_{\R^{2n}} \hk(t,S_{ij}y-x)  u(x) \, \d x,
	\qquad
	y=(y_2,\ldots,y_n) \in\R^{2n-2},
\\
	\label{e.Pt.out.ker}
	&\big(\Pt_t\So_{ij}^*v\big)(x) = \int_{\R^{2n-2}} \hk(t,x-S_{ij}y) v(y) \,\d y,
	\qquad
	x=(x_1,\ldots,x_n)\in\R^{2n},
\\
	\label{e.Pt.med.ker}
	&\big(\So_{ij}\Pt_t\So_{k\ell}^* v\big)(y) = \int_{\R^{2n-2}} \hk(t,S_{ij}y-S_{k\ell}y')  v(y') \, \d y',
	\qquad
	y=(y_2,\ldots,y_n) \in\R^{2n-2}.
\end{align}%
From this we see that $ \Dio^{\Vec{(i,j)}}_t $ has  integral kernel
\begin{align}
	\notag
	\dker^{\Vec{(i,j)}}&(t,x,x')
	=
	\int_{\Sigma_m(t)} \d \vec{\tau}
	\int \hk\big(\tau_0,x-S_{i_1j_1}y^{(1/2)}\big) \, \d y^{(1/2)}
	\cdot
	4\pi\hk^\Jo\big(\tau_{1/2},y^{(1/2)}-y^{(1)}\big) \, \d y^{(1)} 
\\
	\label{e.diagram.}
	&\hspace{-10pt}\cdot
	\prod_{k=1}^{m-1} 
	\Big( 
		\hk\big(\tau_k,S_{i_kj_k}y^{(k)}-S_{i_{k+1}j_{k+1}}y^{(k+1/2)}\big) \, \d y^{(k+1/2)}
		\cdot 4\pi\hk^\Jo\big(\tau_{k+1/2},y^{(k+1/2)}-y^{(k+1)}\big) \, \d y^{(k+1)}
	\Big)
\\
	\notag
	& \hspace{-10pt}\cdot \hk\big(\tau_m,S_{i_mj_m}y^{(m)}-x'\big),
\end{align} 
where $ \Sigma_m(t) $ is defined in~\eqref{e.Sigma.m}, $x,x'\in\R^{2n}$,
and $ y^{(a)}\in\R^{2n-2} $ with $ a\in(\frac12\Z)\cap(0,m] $.

We wish to further reduce~\eqref{e.diagram.} to an expression 
that involves only the two-dimensional heat kernel $ \hktwo(\tau,x_i) $ and $ \jfn(\tau,\betaC) $.
Recall from~\eqref{e.S} that $ (S_{ij}y):=x $ is a vector in $ \R^{2n} $ such that $ x_i=x_j $.
In~\eqref{e.diagram.}, we write 
\begin{align*}
	S_{i_kj_k}y^{(a)}
	=
	(y^{(a)}_3,\ldots, \underbrace{y^{(a)}_2}_{i_k\text{-th}},\ldots,\underbrace{y^{(a)}_2}_{j_k\text{-th}},\ldots,y^{(a)}_n)
	=
	(x^{(a)}_1,\ldots,x^{(a)}_n) \ind\set{x^{(a)}_{i_k}=x^{(a)}_{j_k}},
\end{align*}
and accordingly,
$
	\d y^{(a)} = \d' x^{(a)},
$
where $ a=k-\frac12,k $.
The vector $ x^{(a)} $ is in $ \R^{2n} $, 
but the integrator $ \d' x^{(a)} $ is $ (2n-2) $-dimensional due to the contraction $ x^{(a)}_{i_k}=x^{(a)}_{i_k} $.
More explicitly,
\begin{align*}
	\d' x^{(a)} 
	:= 
	\Big(\d x^{(a)}_{i_k}\,\prod_{\ell\neq i_k,j_k} \d x^{(a)}_\ell\Big)
	=
	\Big(\d x^{(a)}_{j_k}\,\prod_{\ell\neq i_k,j_k} \d x^{(a)}_\ell\Big),
	\qquad
	a=k-\tfrac12,k. 
\end{align*}
We express $ \hk $ as the product of two dimensional heat kernels, i.e., $ \hk(\tau,x)=\prod_{\ell=1}^n \hktwo(\tau,x_\ell) $ with $x=(x_1,\ldots,x_n)$,
and similarly for $ \hk^\Jo(\tau,\Cdot) $; see \eqref{e.Ptj.ker} in the following for the explicit expression.
This gives
\begin{align}
	\notag
	\dker^{\Vec{(i,j)}}&(t,x,x')
	:=
	\int_{\Sigma_m(t)} \d \vec{\tau}
	\int \prod_{\ell=1}^n \hktwo\big(\tau_0,x_\ell-x^{(1/2)}_\ell\big) \ind{\big\{ x^{(1/2)}_{i_1}= x^{(1/2)}_{j_1} \big\} } \, \d' x^{(1/2)}
\\
	\notag
	&\cdot
	4\pi\jfn\big(\tau_{1/2},\betaC\big) 
	\hktwo\big(\tfrac12\tau_{1/2},x^{(1/2)}_{i_1}-x^{(1)}_{i_1}\big) 
	\prod_{\ell\neq i_1,j_1} \hktwo(\tau_{1/2},x^{(1/2)}_\ell-x^{(1)}_\ell) \, \d' x^{(1)}
\\
	\label{e.diagram}
	&\cdot
	\prod_{k=1}^{m-1} 
		\Bigg( 
			\prod_{\ell=1}^n  \ind{ \big\{ x^{(k)}_{i_{k}}= x^{(k)}_{j_{k}} \big\} } 
			\hktwo\big( \tau_k, x^{(k)}_\ell-x^{(k+1/2)}_\ell \big) 
			\ind{\big\{ x^{(k+1/2)}_{i_{k+1}}= x^{(k+1/2)}_{j_{k+1}} \big\}} 
			\d' x^{(k)}
\\
	\notag
	&\hphantom{\cdot\prod_{k=2}^m\Bigg(}
			\cdot
			4\pi\jfn\big( \tau_{k+1/2},\betaC \big) 
			\hktwo\big( \tfrac12\tau_{k+1/2},x^{(k+1/2)}_{i_{k+1}}-x^{(k+1)}_{i_{k+1}} \big) 
			\prod_{\ell\neq i_k,j_k} \hktwo\big(\tau_{k+1/2},x^{(k+1/2)}_\ell-x^{(k+1)}_\ell\big) \, \d' x^{(k+1)}
		\Bigg)
\\
	\notag
	&\cdot
	\prod_{\ell=1}^n \hktwo\big(\tau_{m},x^{(m)}_\ell-x'_\ell\big).
\end{align} 

This complicated looking formula can be conveniently recorded in terms of diagrams.
Set $ A:=(\frac12\Z)\cap[0,m+\frac12] $, and adopt the convention $ x^{(0)}:=x $ and $ x^{(m+1/2)}:=x' $.
We schematically represent spacetime $ \R_+\times\R^{2} $ by the plane,
with the horizontal direction being the time axis $ \R_+ $, and the vertical direction representing space $ \R^2 $.
We put dots on the plane representing $ x^{(a)}_\ell $, $ a\in A $.
Dots with smaller $ a $ sit to the left of those with bigger $ a $, and those with the same $ a $ lie on the same vertical line.
The horizontal distance between $ x^{(a-1/2)}_\ell $ and $ x^{(a)}_\ell $, $ a\in A $, 
represents a time lapse $ \tau_{a} >0 $.
We fix the time horizon between $ x_\ell = x^{(0)}_\ell $ and $ x'_{\ell}=x^{(m+1/2)}_\ell $ to be $ t $, 
which forces $ \tau_{0}+\tau_{1/2}+\ldots+\tau_m=t $.
The points $ x^{(a)}_\ell $, are generically represented by distinct dots,
expect that $ x^{(a)}_{i_k} $ and $ x^{(a)}_{j_k} $ are joined for $ k=a-1/2,a $.
In these cases we call the dot double, otherwise single.
See Figure~\ref{f.diagramo} for an example with $ n=4 $ and $ \Vec{(i,j)}=((1<2),(2<3),(3<4)) $.

\begin{figure}[h]
\includegraphics[width=.75\textwidth]{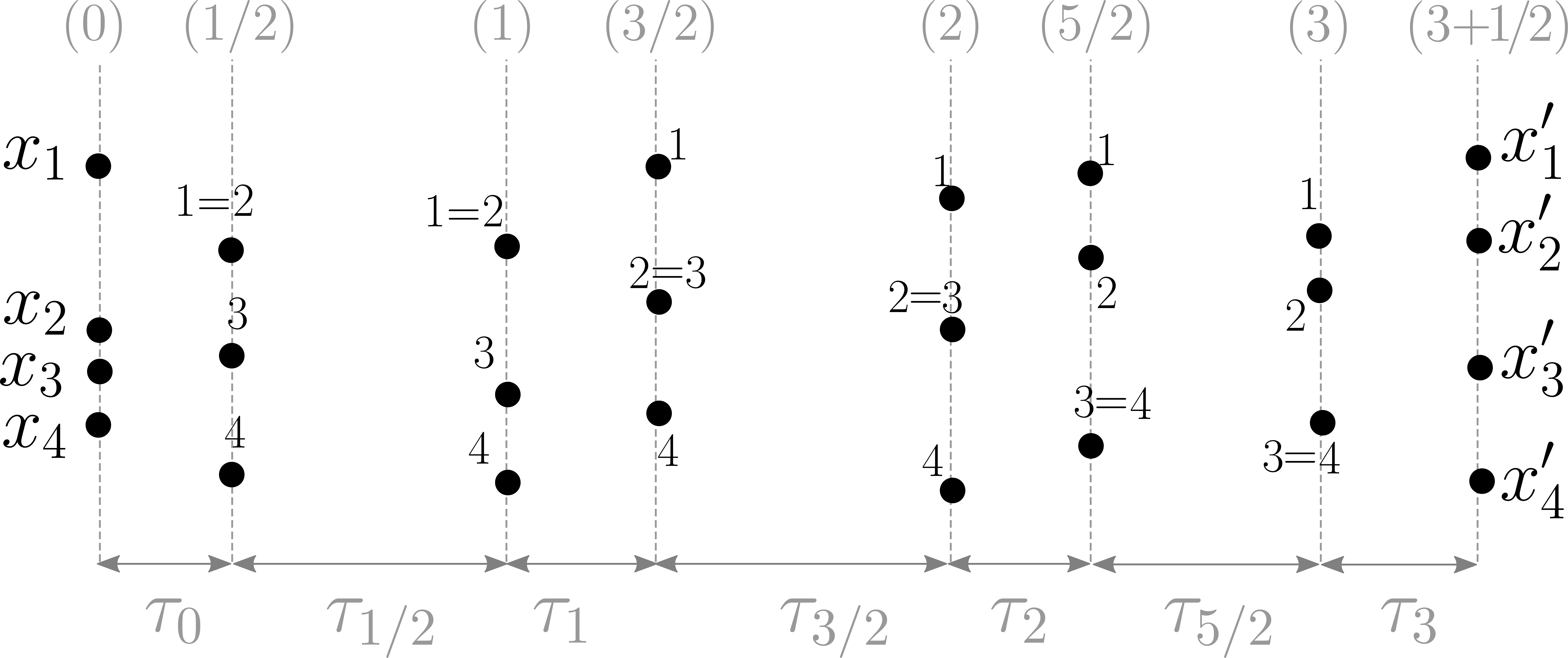}
\caption{%
Schematic representation of $ x^{(a)}_\ell $, with $ n=4 $ and $ \vec{(i,j)}=((1<2),(2<3),(3<4)) $.
Each dot represents a point $ x^{(a)}_\ell $, $ a\in(\frac12\Z)\cap[0,3+\frac12] $,
with the convention $ x_\ell := x^{(0)}_\ell $ and $ x'_\ell := x^{(3+1/2)}_\ell $.
In the figure, the $ \ell $ indices are printed in black next to the dot, 
while the $ a $ superscripts are put over the vertical, dashed line.
The horizontal distances between dash lines represent time lapses $ \tau_a $.%
}
\label{f.diagramo}
\end{figure}

Next, connect dots that represent $ x^{(a-1/2)}_\ell $ and $ x^{(a)}_\ell $ together,
by a `single' line except for the case when both ends are double points, by a `double' line otherwise. 
To each regular line we assign a two-dimensional heat kernel $ \hktwo(\tau_{a},x^{(a-1/2)}_\ell-x^{(a)}_\ell) $,
and to each double line assign the quantity
$ 4\pi\jfn(\tau_{a},\betaC)\hktwo(\tfrac12\tau_{a},x^{(a-1/2)}_\ell-x^{(a)}_\ell) $.
The kernel $ \dker^{\Vec{(i,j)}}(t,x,x') $ is then obtained by multiplying together 
the quantities assigned to the (regular and double) lines,
and integrate the $ x^{(a)} $'s and $ \tau_a $'s, with the points $ x_\ell := x^{(0)}_\ell $ and $ x'_{\ell}=x^{(m+1/2)}_\ell $ being fixed.
See Figure~\ref{f.diagram} for an example with $ n=4 $ and $ \Vec{(i,j)}=((1<2),(2<3),(3<4)) $.
\begin{figure}[h]
\includegraphics[width=.75\textwidth]{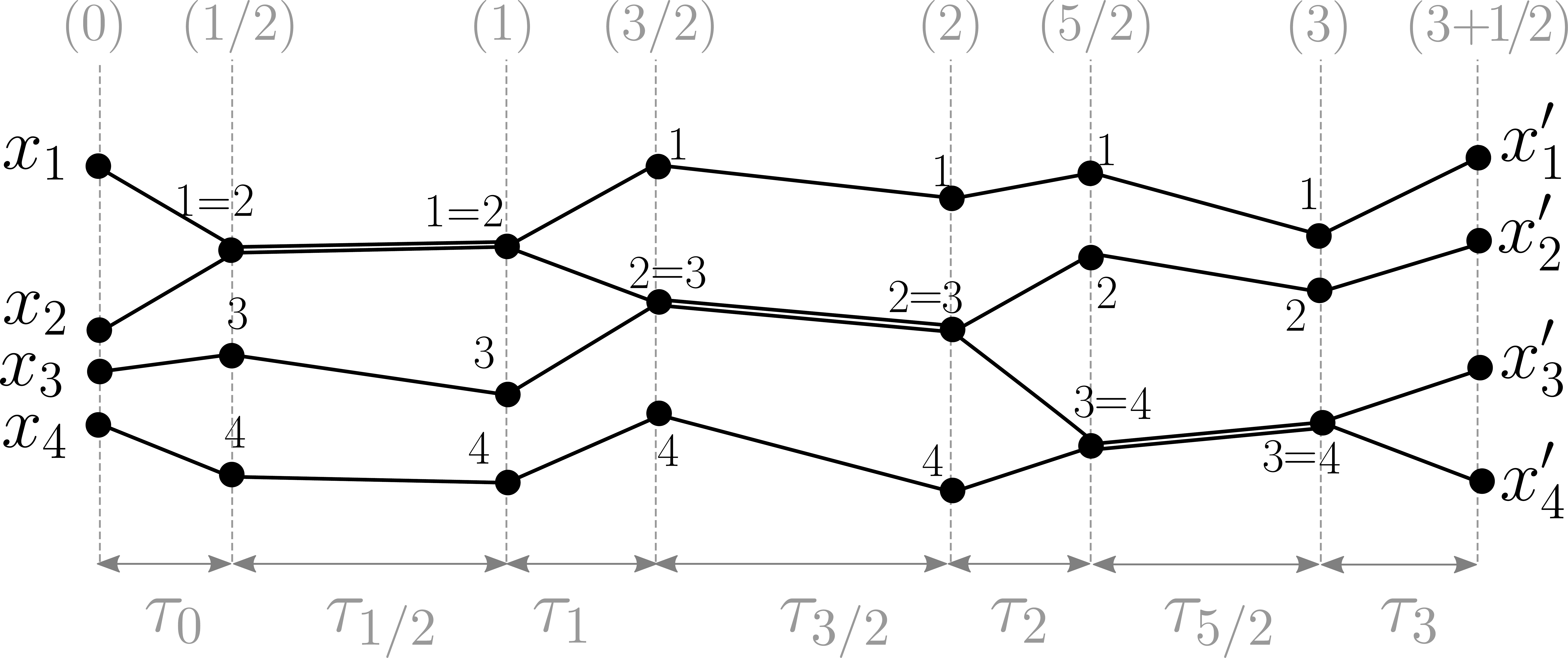}
\caption{%
The diagram representation for $ \dker^{\vec{(i,j)}}(t,x,x') $, with $ n=4 $ and $ \vec{(i,j)}=((1<2),(2<3),(3<4)) $.
Each regular (single) line between dots is assigned $ \hktwo(\tau,x^{(a-1/2)}_\ell-x^{(a)}_\ell) $,
while each double line is assigned $ 4\pi\jfn(\tau,\betaC)\hktwo(\tfrac12\tau,x^{(a-1/2)}_\ell-x^{(a)}_\ell) $,
where $ x^{(a-1/2)}_\ell $ and $ x^{(a)}_\ell $ are represented by the dots at the two ends,
and $ \tau $ is the horizontal distance between these dots.%
}
\label{f.diagram}
\end{figure}

In the follow two subsections, we examine the $ n=2,3 $ cases, and derive some useful formulas.

\subsection{The $ n=2 $ case}
In this case, the only index is the singleton $ \Vec{(i,j)}=((1<2)) $, whereby
\begin{subequations}
\label{e.n=2.}
\begin{align}
	\label{e.n=2.1}
	\big( \hk + \dker^{\diag(2)} \big)(t,x_1,x_2,x'_1,x'_2)
	&=
	\prod_{\ell=1}^2 \hktwo(t,x_\ell-x'_\ell)
	+
	\int_{\tau_0+\tau_{1/2}+\tau_1=t} \d \vec{\tau}
	\int \prod_{\ell=1}^2 \hktwo\big(\tau_0,x_\ell-x^{(1/2)}_1\big) \d x^{(1/2)}_1
\\
	\label{e.n=2.2}
	&\cdot
	4\pi\jfn\big(\tau_{1/2},\betaC\big)
	\hktwo\big(\tfrac12\tau_{1/2},x^{(1/2)}_{1}-x^{(1)}_{1}\big) 
	\d x^{(1)}_1
\\
	\label{e.n=2.3}
	&\cdot
	\prod_{\ell=1}^2 \hktwo\big(\tau_{1},x^{(1)}_1-x'_\ell\big),
\end{align} 
\end{subequations}
and the diagram of $ \dker^{((12))}(t,x,x') $ is given in Figure~\ref{f.n=2}.

\begin{figure}[h]
\includegraphics[width=.45\textwidth]{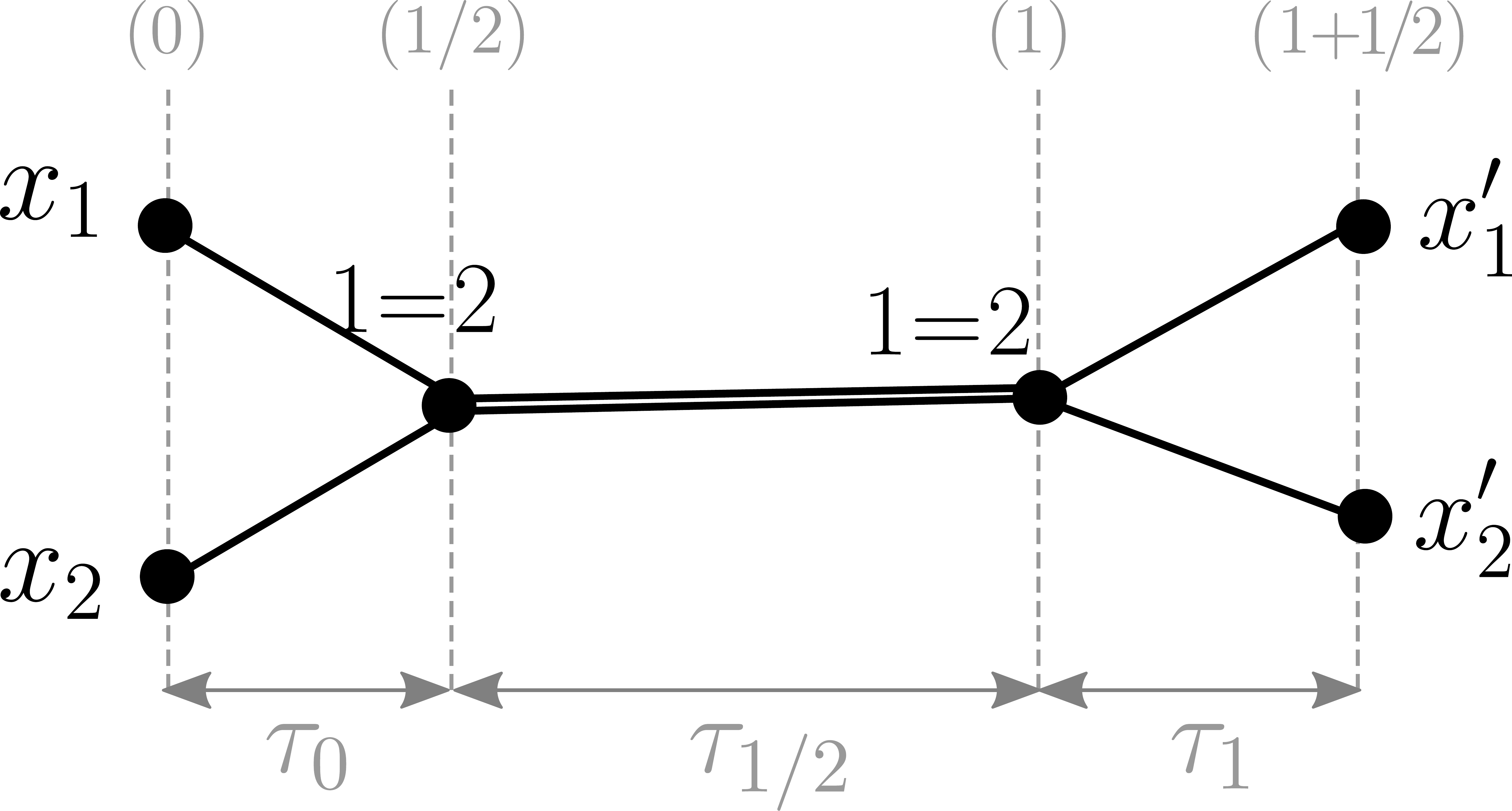}
\caption{The diagram of $ \dker^{((12))}(t,x,x') $.}
\label{f.n=2}
\end{figure}

In~\eqref{e.n=2.1}, rewrite the products in the center-of-mass and relative coordinates, 
\begin{align*}
	\prod_{\ell=1}^2 \hktwo\big(\tau,x_\ell) = \hktwo\big(\tfrac12\tau,\tfrac{x_1+x_2}{2}\big) \hktwo(2\tau,x_1-x_2) ,
\end{align*}
and then integrate over $ x^{(1/2)}_1,x^{(1)}_1\in\R^2 $,
using the semigroup property of $ \hktwo(\Cdot,\Cdot) $.
We then obtain
\begin{align}
\begin{split}
	\label{e.n=2}
	\big( 
		&\hk
		+
		\dker^{\diag(2)}
	\big)(t,x_1,x_2,x'_1,x'_2)
\\
	&=
	\hktwo\big(\tfrac12 t,x_\mathrm{c}-x'_\mathrm{c})
	\Big(
		\hktwo\big(2t,x_\mathrm{d}-x'_\mathrm{d}\big)
		+
		\int_{\tau_0+\tau_{1/2}+\tau_1=t} \d\vec{\tau} \,
		\hktwo\big(2\tau_0,x_\mathrm{d}\big)
		\,
		4\pi\jfn\big(\tau_{1/2},\betaC\big)
		\,
		\hktwo\big(2\tau_1,x'_\mathrm{d}\big)
	\Big),
\end{split}
\end{align} 
where $ x_\mathrm{c} := \frac{x_1+x_2}{2} $, $ x_\mathrm{d} := x_1-x_2 $, and similarly for $ x'_{\Cdot} $.

\begin{remark}\label{rmk.BCmatching}
The formula~\eqref{e.n=2} matches \cite[Equation~(3.11)--(3.12)]{bertini98} after a reparametrization.
Recall $ \betaC $ from~\eqref{e.betaC}.
Comparing our parameterization \eqref{e.betaeps} and with \cite[Equation~(2.6)]{bertini98},
we see that $ \betaC $ here corresponds to $ \log\beta $ in \cite{bertini98}.
The expression in~\eqref{e.n=2} matches \cite[Equation~(3.11)--(3.12)]{bertini98}
upon replacing $ (x_\mathrm{d},x'_\mathrm{d}) \mapsto(x,y) $, $ \betaC \mapsto \log \beta $,
and using the identity:
\begin{align}
	\label{e.khmatch}
	\int_0^\tau \hktwo(2(\tau-s),x_\mathrm{d}) \hktwo(2s,x'_\mathrm{d}) \, \d s
	=
	\frac{1}{8\pi^2\tau} \exp\big( -\tfrac{1}{4\tau}(|x_\mathrm{d}|^2+|x'_\mathrm{d}|^2) \big) K_0\big(\tfrac{|x_\mathrm{d}||x'_\mathrm{d}|}{2\tau}\big),
\end{align}
where $ K_\nu $ denotes the modified Bessel function of the second kind.

To prove~\eqref{e.khmatch}, by scaling in $ \tau $, without lost of generality we assume $ \tau=1 $.
On the l.h.s.\ of~\eqref{e.khmatch},
factor out $ \exp(-\frac{1}{4}(|x_\mathrm{d}|^2+|x'_\mathrm{d}|^2)) $,
decompose the resulting integral into $ s\in(0,1/2) $ and $ s\in(1/2,1) $,
for the former perform the change of variable $ u=(1-s)/s $, and for the latter $ u=s/(1-s) $.
We have
\begin{align*}
	(\text{l.h.s.\ of }\eqref{e.khmatch})
	=
	\exp\big(-\tfrac{1}{4}(|x_\mathrm{d}|^2+|x'_\mathrm{d}|^2)\big) I_\star,
	\qquad
	I_\star:= \ 2\int_1^\infty \frac{1}{(4\pi)^2 u} e^{-\frac{1}{4}(u|x_\mathrm{d}|^2+\frac{1}{u}|x'_\mathrm{d}|^2)} \, \d u.
\end{align*}
The integrand within the last integral stays unchanged upon the change of variable $ u\mapsto 1/u $,
while the range maps to $ (0,1) $.
We hence replace $ 2\int_1^\infty(\,\Cdot\,)\, \d u $ with $ \int_0^\infty(\,\Cdot\,)\,\d u $.
Within the result, perform a change of variable $ v = 2u|x_\mathrm{d}|^2 $,
and from the result recognize $ \frac{1}{2\pi v} e^{-\frac{1}{2v}(|x_\mathrm{d}|^2|x'_\mathrm{d}|^2)} = \hktwo(v,|x_\mathrm{d}||x'_\mathrm{d}|) $. 
We get
\begin{align*}
	I_\star
	=
	\int_0^\infty \frac{1}{(4\pi)^2 v} e^{-\frac{|x_\mathrm{d}|^2|x'_\mathrm{d}|^2}{2v}} e^{-\tfrac{v}8} \, \d v
	=
	\frac{1}{8\pi} \Gtwo_{-\frac18} \big( |x_\mathrm{d}||x'_\mathrm{d}| \big),
\end{align*}
where $ \Gtwo_z(|x|)=\Gtwo_z(x) :=( -\frac12  \nabla^2 - z\Io )^{-1}(0,x) $ denotes two-dimensional Green's function.
We will show in Lemma~\ref{l.Gtwo} that $ \Gtwo_z(x) = \frac{1}{\pi} K_0(\sqrt{-2z}|x|) $.
This gives~\eqref{e.khmatch}.
\end{remark}

\subsection{The $ n=3 $ case}
Here we derive a formula for the limiting centered third moment.
We say $ \Vec{(i,j)}=((i_k<j_k))_{k=1}^m \in \diag(n) $ is \textbf{degenerate} if $ \cup_{k=1}^m \{i_k,j_k\} \subsetneqq \{1,\ldots,n\} $,
and otherwise nondegenerate.
Let $ \diag'(n) $ denote the set of all nondegenerate elements of $ \diag(n) $, and, accordingly,
\begin{align*}
	\Dio^{\diag'(n)}_t
	:=
	\sum\nolimits_{\Vec{(i,j)}\in\diag'(n)} \Dio^{\Vec{(i,j)}}_t.
\end{align*}
\begin{proposition}\label{p.3rdmom}
Start the \ac{SHE} from $ Z_\e(0,\Cdot)=Z_\ic(\Cdot)\in\Lsp^2(\R^2) $.
For any $ f\in\Lsp^2(\R^2) $,
\begin{align}
	\label{e.3rdmom}
	\E\Big[ \Big( \ip{f}{Z_{\e,t}} - \E[\ip{f}{Z_{\e,t}}] \Big)^3 \Big] 
	\longrightarrow
	\Ip{ f^{\otimes 3} }{ \Dio^{\diag'(3)}_t Z_\ic^{\otimes 3} }
	\quad
	\text{as } \e\to 0,
\end{align}
uniformly in $ t $ over compact subsets of $ [0,\infty) $.
\end{proposition}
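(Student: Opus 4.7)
The plan is to expand the centered cube algebraically, apply Theorem~\ref{t.main}\ref{t.main.dcnvg} to each resulting uncentered moment for $n=1,2,3$, and exhibit an exact cancellation that leaves only the nondegenerate diagrams.

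Set $X_\e := \ip{f}{Z_{\e,t}}$. Starting from the identity
\begin{align*}
	\E\big[(X_\e - \E X_\e)^3\big] = \E[X_\e^3] - 3\E[X_\e^2]\E[X_\e] + 2(\E[X_\e])^3,
\end{align*}
I would apply Theorem~\ref{t.main}\ref{t.main.dcnvg} with test function $f^{\otimes n}\in\Lsp^2(\R^{2n})$ for $n=1,2,3$. Writing $A := \ip{f}{\Pt_t Z_\ic}$ and $B := \Ip{f^{\otimes 2}}{(\Pt_t + \Dio^{\diag(2)}_t) Z_\ic^{\otimes 2}}$, and using the factorization $\ip{f^{\otimes n}}{\Pt_t Z_\ic^{\otimes n}} = A^n$ (since $\Pt_t$ tensor-factorizes across particles) together with $\diag(1)=\varnothing$, one obtains uniformly on compact $t$-subsets
\begin{align*}
	\E[X_\e]\to A, \qquad \E[X_\e^2]\to B, \qquad \E[X_\e^3]\to A^3 + \Ip{f^{\otimes 3}}{\Dio^{\diag(3)}_t Z_\ic^{\otimes 3}}.
\end{align*}

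The main technical step is to isolate the degenerate diagrams. Because consecutive pairs in any $\vec{(i,j)}\in\diag(3)$ must differ, a diagram that uses only one of the three ordered pairs $(1{<}2), (1{<}3), (2{<}3)$ must have length $m=1$, so $\diag(3)\setminus\diag'(3)=\{((1{<}2)),((1{<}3)),((2{<}3))\}$, just three singletons. For each such $((i{<}j))$, I claim the factorization
\begin{align*}
	\Ip{f^{\otimes 3}}{\Dio^{((i<j))}_t Z_\ic^{\otimes 3}} = A\cdot\Ip{f^{\otimes 2}}{\Dio^{((1<2))}_t Z_\ic^{\otimes 2}} = A(B-A^2),
\end{align*}
which I would verify by reading off the integral kernel in~\eqref{e.diagram}: the free index $k\in\{1,2,3\}\setminus\{i,j\}$ never enters a contraction, so its trajectory $x_k\to x^{(1/2)}_k\to x^{(1)}_k\to x'_k$ is a chain of two-dimensional heat kernels $\hktwo(\tau_0,\Cdot)\hktwo(\tau_{1/2},\Cdot)\hktwo(\tau_1,\Cdot)$ whose intermediate variables collapse by the semigroup property into $\hktwo(t,x_k-x'_k)$, factoring cleanly out of the integrand; what remains is precisely the two-particle kernel $\dker^{((1<2))}$ from~\eqref{e.n=2.} acting on the remaining pair of coordinates. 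The three singletons yield equal contributions by the permutation symmetry of $f^{\otimes 3}$ and $Z_\ic^{\otimes 3}$.

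Assembling these ingredients gives
\begin{align*}
	\Ip{f^{\otimes 3}}{\Dio^{\diag(3)}_t Z_\ic^{\otimes 3}} = \Ip{f^{\otimes 3}}{\Dio^{\diag'(3)}_t Z_\ic^{\otimes 3}} + 3A(B-A^2),
\end{align*}
and substituting into the algebraic expansion of the centered cube yields
\begin{align*}
	\lim_{\e\to 0}\E\big[(X_\e - \E X_\e)^3\big] = \big(A^3 + \Ip{f^{\otimes 3}}{\Dio^{\diag'(3)}_t Z_\ic^{\otimes 3}} + 3A(B-A^2)\big) - 3BA + 2A^3 = \Ip{f^{\otimes 3}}{\Dio^{\diag'(3)}_t Z_\ic^{\otimes 3}}.
\end{align*}
Uniformity over compact $t$-subsets is inherited directly from the uniform-on-compacts convergence in Theorem~\ref{t.main}\ref{t.main.dcnvg} together with continuity of the limits. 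The only nontrivial step is the decoupling of the degenerate contributions; everything else is algebra plus an application of the main theorem.
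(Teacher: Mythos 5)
Your proposal is correct and follows essentially the same route as the paper: expand the centered cube, pass to the limit using Theorem~\ref{t.main}\ref{t.main.dcnvg} for $n=1,2,3$, and identify the $-3\E[X^2]\E[X]+2(\E X)^3$ correction terms with the contributions of the degenerate singleton diagrams $((1<2)),((1<3)),((2<3))$. Your explicit verification of the factorization $\Ip{f^{\otimes 3}}{\Dio^{((i<j))}_t Z_\ic^{\otimes 3}} = A\cdot\Ip{f^{\otimes 2}}{\Dio^{((1<2))}_t Z_\ic^{\otimes 2}}$ via the semigroup collapse of the free coordinate's heat-kernel chain is actually a welcome bit of detail that the paper states as an observation without spelling out the kernel computation.
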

\begin{proof}
Expand the l.h.s.\ of~\eqref{e.3rdmom} into a sum of products of $ n'=1,2,3 $ moments of $ \ip{f}{Z_{\e,t}} $ as
\begin{align}
	\label{e.p.3rdmom}
	\E\Big[ \Big( \ip{f}{Z_{\e,t}} - \E[\ip{f}{Z_{\e,t}}] \Big)^3 \Big] 
	=
	\E\big[ \ip{f}{Z_{\e,t}}^3 \big] - 3 \E\big[ \ip{f}{Z_{\e,t}}^2 \big] \, \E\big[ \ip{f}{Z_{\e,t}} \big] + 2\big(\E\big[ \ip{f}{Z_{\e,t}} \big]\big)^3.
\end{align}
For the $ n'=1 $ moment, rewriting the \ac{SHE}~\eqref{e.she} in the mild (i.e., Duhamel) form and take expectation gives
\begin{align*}
	\E[ \ip{f}{Z_{\e,t}} ] = \ip{f}{\hktwo*Z_\ic} = \int_{\R^4} \bar{f(x')}\hktwo(t,x'-x)Z_\ic(x)\,\d x \d x',
\end{align*}
where $ * $ denotes convolution in $ x\in\R^2 $.
Note that for $ n'=2 $ the only index $ \diag(2)=\{((1<2))\} $ is the singleton and that
$
	\ip{f^{\otimes n'}}{ \Pt_t Z_\ic^{\otimes n'} } = \ip{f}{\hktwo*Z_\ic}^{n'}.
$
We then have
\begin{align}
	\label{e.p.3rdmom.}
	\lim_{\e\to 0} \E\big[ \ip{f}{Z_{\e,t}}^3 \big] &= \big(\ip{f}{\hktwo*Z_\ic}\big)^{3} + \Ip{ f^{\otimes 3} }{ \Dio^{\diag(3)}_t Z_\ic^{\otimes 3} },
\\
	\label{e.p.3rdmom..}
	\lim_{\e\to 0} \E\big[ \ip{f}{Z_{\e,t}}^2 \big] &= \big(\ip{f}{\hktwo*Z_\ic}\big)^{2} + \Ip{ f^{\otimes 2} }{ \Dio^{((12))}_t Z_\ic^{\otimes 2} }.
\end{align}
Inserting~\eqref{e.p.3rdmom.}--\eqref{e.p.3rdmom..} into~\eqref{e.p.3rdmom} gives
\begin{align}
	\label{e.p.3rdmom...}
	\lim_{\e\to 0}
	\E\Big[ \Big( \ip{f}{Z_{\e,t}} - \E[\ip{f}{Z_{\e,t}}] \Big)^3 \Big] 
	=
	\Ip{ f^{\otimes 3} }{ \Dio^{\diag(3)}_t Z_\ic^{\otimes 3} } - 3 \ip{f}{\hktwo*Z_\ic} \, \Ip{f^{\otimes 2}}{ \Dio^{((12))}_t Z_\ic^{\otimes 2} }.
\end{align}
For $ n'=3 $, degenerate indices in $ \diag(3) $ are the singletons $ ((1<2)), ((1<3)), ((2<3)) $.
This being the case, we see that the last term in~\eqref{e.p.3rdmom...} exactly cancels the contribution of degenerate indices in $ \Ip{f^{\otimes 3}}{ \Dio^{\diag(3)}_t Z_\ic^{\otimes 3} } $.
The desired result follows.
\end{proof}

\subsection{Proof of Corollary~\ref{c.main}}
Here we prove Corollary~\ref{c.main} assuming Theorem~\ref{t.main} (which will be proven in Section~\ref{sect.laplace}).
Our first goal is to show $ \mu_{\e,t}(\d x_1) := Z_{\e}(t,x_1) \d x_1 $, as a random measure on $ \R^2 $, is tight in $ \e $, under the vague topology.
This tightness has been established in \cite{bertini98}, and we repeat the argument here for the sake of being self-contained.
By~\cite[Lemma~14.15]{kallenberg97}, this amounts to showing $ \int_{\R^2} g(x) \mu_{\e,t}(\d x) = \ip{\bar{g}}{Z_{\eps,t}} $ is tight (as a $ \C $-valued random variable),
for each $ g\in \Csp_\cmp(\R^2) $.
Apply Theorem~\ref{t.main} with $ n=2 $, with $ Z_\ic(x_1) \mapsto |Z_\ic(x_1)| \in \Lsp^2(\R^2) $, and with $ f(x_1,x_2) = |g(x_1)g(x_2)| $.
We obtain that $ \E[|\ip{Z_{\e,t}}{g}|^2] $ is uniformly bounded in $ \e $, so $ \int_{\R^2} g(x) \mu_{\e,t}(\d x) $ is tight.

Fixing a limit point $ \mu_{*,t} $ of $ \{\mu_{\e,t}\}_\e $,
we proceed to show~\eqref{e.limit.point.mom}.
Fix a sequence $ \e_k \to 0 $ such that $ \mu_{\e_k,t,Z} \to \mu_{*,t} $ vaguely, as $ k\to\infty $.
The desired result~\eqref{e.limit.point.mom} follows from Theorem~\ref{t.main} if we can upgrade the preceding vague convergence of $ \mu_{\e_k,t,Z} $ to convergence in moments.
To this end we appeal to Theorem~\ref{t.main}. %
Note that $ |Z_\ic(\Cdot)| $ itself is in $ \Lsp^2(\R^2) $. %
Also, for fixed $ f_1,\ldots,f_n \in \Csp_\cmp(\R^2) $, %
the function $ f(x_1,\ldots,x_{2n}) := \prod_{i=1}^n |f_i(x_i)f_{i}(x_{n+i})| $ is in $ \Lsp^2(\R^{2n}) $. %
Applying Theorem~\ref{t.main} with $ n\mapsto 2n $, with $ Z_\ic(x_1) \mapsto |Z_\ic(x_1)| \in \Lsp^2(\R^2) $, and with $ f(x_1,\ldots,x_{2n}) = \prod_{i=1}^n |f_i(x_i)f_{i}(x_{n+i})| $, we obtain that 
\begin{align*}
	\E\Big[ \ip{f}{|Z_{\e,t}|^{\otimes 2n}} \Big]
	=
	\E\Big[\Big|\ip{f_1\otimes\cdots \otimes f_n}{Z_{\e,t}^{\otimes n}}\Big|^2\Big] 
	= 
	\E\Big[\Big| \prod_{i=1}^n \int_{\R^2} \bar{f_i(x_i)} \mu_{\e,t}(\d x_i) \Big|^2 \Big]
\end{align*}
is uniformly bounded in $ \e $.
Hence $ (\prod_{i=1}^n \int_{\R^2} \bar{f_i(x_i)} \mu_{\e,t}(\d x_i)) $ is uniformly integrable in $ \e $ (as $ \C $-valued random variables),
which guarantees the desired convergence in moments.

We now move on to showing \eqref{e.nonGaussian}.
For $ Z_\ic(x_1), f_1(x_1) \geq 0 $, both not identically zero, 
we apply Proposition~\ref{p.3rdmom} to obtain the $ \e\to 0 $ limit of the centered, third moment of $ \int_{\R^2}f_1(x_1)\mu_{\e,t,Z}(\d x_1) $.
As just argued, such a limit is also inherited by $ \mu_{*,t} $, whereby
\begin{align}
	\label{e.3rdmom.limit}
	\E\Big[ \Big( \int_{\R^2} f_1(x_1) \mu_{*,t}(\d x_1) - \E\Big[\int_{\R^2} f_1(x_1) \mu_{*,t}(\d x_1)\Big] \Big)^3 \Big] 
	=
	\Ip{ f_1^{\otimes 3} }{ \Dio^{\diag'(3)}_t Z_\ic^{\otimes 3} }.
\end{align}
As seem from~\eqref{e.diagram}, the operator $ \Dio^{\Vec{(i,j)}} $ has a \emph{strictly} positive integral kernel.
Under current assumption $ Z_\ic(x_1), f_1(x_1) \geq 0 $ and not identically zero,
we see that the r.h.s.\ of~\eqref{e.3rdmom.limit} is strictly positive. 

\section{Resolvent identity}
\label{sect.resolvent1}

In this section we derive the identity \eqref{e.resolvent.e} for the resolvent $ \Ro_{\e,z}=(\Ho_\e-z)^{-1} $ which is the key to our analysis.  

Let $ \Ho_\fr := - \frac12 \sum_{i} \nabla_i^2 $ denote the `free Hamiltonian', and let $\Do_\e: \Lsp^2(\R^{2n}) \to \Lsp^2(\R^{2n})$
\begin{align*}
	\Do_\e u(x) := \sum_{i<j} \delta_\e(x_i-x_j) u(x)
\end{align*}
denote the operator of multiplication by the approximate delta potential, which is a bounded operator for each $ \e>0 $.
The Hamiltonian $ \Ho_\e $ is then an unbounded operator on $ \Lsp^2(\R^{2n}) $ with domain $ \Hsp^2(\R^{2n}) $ (the Sobolev space), i.e.,
\begin{align}
	\label{e.hame}
	\Ho_\e := \Ho_\fr - \beta_\e \Do_\e,
	\quad\quad
	\Dom(\Ho_\e) := \Hsp^2(\R^{2n}) \subset \Lsp^2(\R^{2n}).
\end{align}
The first step is to built a `square root' of $ \Do_\e $.
More precisely, we seek to construct an operator $ \So_{\e ij} $, indexed by a pair $ i<j $, and its adjoint $ \So^*_{\e ij} $ such that 
$ \Do_\e = \sum_{i<j} \So^*_{\e ij} \phi\,\phi \So_{\e ij} $.
To this end,
for each $\eps>0$ and $1\leq i<j\leq n$, consider the linear transformation $ T_{\eps i j}: \R^{2n}\to \R^{2n} $:
\begin{align}
\label{e.T}
 	T_{\eps i j}(x_1,\ldots,x_n)
 	:=
 	(\tfrac{x_i-x_j}{\eps},\tfrac{x_i+x_j}{2},x_{\bar{ij}}),
\end{align}
where $x_{\bar{ij}}\in\R^{2(n-2)}$ denotes the vector obtained by removing the $i,j$-th components from $ x\in\R^{2n} $.
In other words, 
the transformation $T_{\eps ij}$ places the relative distance (on the scale of $\eps$) and the center of mass corresponding to $(x_i,x_j)$ in the first two components, 
while keeping all other components unchanged. 
The transformation $ T_{\eps i j} $ has inverse $ S_{\eps i j}=T_{\eps i j}^{-1}:\R^{2n}\to \R^{2n} $:
\begin{align}\label{e.Se}
	 S_{\eps i j} (y_1,\ldots,y_n)
	 :=
	 (y_3,\ldots,\underbrace{y_2+\tfrac{\eps y_1}{2}}_{i\text{-th}},\ldots, \underbrace{y_2-\tfrac{\eps y_1}{2}}_{j\text{-th}},\ldots, y_n).
\end{align}
Accordingly, we let $ \So_{\e ij} $ and $ \So_{\e ij}^* $ be the induced operators  $ \Lsp^2(\R^{2n})  \to \Lsp^2(\R^{2n})$,
\begin{align}
	\label{e.Soe}
	\big( \So_{\e ij} u \big)(y) := u(S_{\e ij}y),
	\quad
	\big( \So^*_{\e ij} v \big)(x) := \e^{-2} v(T_{\e ij}x).
\end{align}
It is straightforward to check that $ \So_{\e ij}^* $ is the adjoint of $ \So_{\e ij} $,
i.e., the unique operator for which  $\ip{\So^*_{\eps ij} v}{u}=\ip{v}{\So_{\e ij} u} $, $ \forall u,v\in\Lsp^2(\R^{2n}) $.
Since $ S_{\eps i j}, T_{\eps i j} $ are both invertible, 
the operators $ \So_{\eps ij}, \So^{*}_{\eps ij} $ are bounded for each $ \eps>0 $.
$ \Phi $ (defined in \eqref{e.hame.}) is even and non-negative, so
we can set
$
	\phi(x) := \sqrt{\Phi}(x)
$
and view $ (\phi v)(y) := \phi(y_1) v(y_1,\ldots,y_n) $ as a bounded multiplication operator on $ \Lsp^2(\R^{2n}) $.
From~\eqref{e.Soe}, it is straightforward to check
\begin{align}
	\label{e.DoSS}
	\Do_\e = \sum_{i<j} \So^*_{\e ij}\phi\,\phi\So_{\e ij}.
\end{align}


\begin{remark}
We comment on how our setup compares to that of~\cite{dimock04}.
They work in  $ \Lsp^2_\sym(\R^{2n}) $, corresponding to $ n $ Bosons in $ \R^2 $,
the key idea being to decompose the action of the delta potential $ \Do_\e $ on $ \Lsp^2_\sym(\R^{2n}) $
into some intermediate actions from $ \Lsp^2_\sym(\R^{2n}) $ into an `auxiliary space',
consisting of $ n-2 $ Bosons and an `angle particle'.
In our current setting, the auxiliary space is $ \Lsp^2(\R^{2n}) \ni v=v(y_1,y_2,y_3,\ldots,y_n) $.
The components $ y_3,\ldots,y_n $ correspond to the  $ n-2 $ particles,
the component $ y_2 $ corresponds to the angle particle,
while $ y_1 $ is a `residual' component that arises from our space-mollification scheme,
and is not presented under the momentum-cutoff scheme of \cite{dimock04}.
\end{remark}

Given \eqref{e.DoSS},
the next step is to develop an expression for the resolvent $ \Ro_{\e,z}=(\Ho_\e-z)^{-1} $
that is amenable for the $ \e\to 0 $ asymptotic.
In the case of momentum cutoff,
such a resolvent expression is obtained in (Eq (68) of) \cite{dimock04}
by comparing two different ways of inverting a two-by-two (operator-valued) matrix.
Here, we derive the analogous expression (i.e., \eqref{e.resolvent.e})
using a more straightforward procedure --- power-series expansion of (operator-valued) geometric series.
Recall $ \diag(n,m) $ from~\eqref{e.diag.set.m},
recall that $ \normo{\geno} $ denotes the operator norm of $ \geno $,
and recall from~\eqref{e.Green} that $ \Go_z $ denotes the resolvent of the Laplacian.

\begin{lemma}\label{l.resolvent.e}
For all $ \eps\in(0,1) $ and 
$ z\in\C $ such that $ \Re(z)<-\beta_\e (1+\sum_{i<j} \normo{\So_{\e ij}\phi})^2 $, we have
\begin{subequations}
\label{e.resolvent.e}
\begin{align}
	\label{e.resolvent.e.out}
	\Ro_{\e,z}
	:=&
	(\Ho_\e-z\Io)^{-1}
	=
	\Go_{z} 
	+ 
	\sum_{m=1}^\infty \ \sum_{\Vec{(i,j)}\in\diag(n,m)}
	\big( \Go_z \So^{*}_{\e i_1j_1} \phi \big)
\\
	\label{e.resolvent.e.med}
	&\cdot
	\big( \beta_\e^{-1}\Io - \phi\So_{\e 12}\Go_z\So_{\e 12}^{*}\phi \big)^{-1}
	\prod_{k=2}^{m} \Big( \big(\phi\So_{\e i_{k-1}j_{k-1}}\Go_z\So^*_{\e i_kj_k}\phi \big) \, \big( \beta_\e^{-1}\Io - \phi\So_{\e 12}\Go_z\So_{\e 12}^{*}\phi \big)^{-1}  \Big)
\\
	\label{e.resolvent.e.in}
	&\cdot
	\big( \phi\So_{i_{m}j_{m}}\Go_z \big).
\end{align}
\end{subequations}
\end{lemma}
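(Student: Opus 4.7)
The plan is to derive~\eqref{e.resolvent.e} by expanding $\Ro_{\e,z}$ as a Neumann series in $\beta_\e\Do_\e$ and then reorganizing the sum so that maximal runs of repeated pairs $(i,j)$ collapse into geometric series. Since $\Ho_\e - z\Io = \Go_z^{-1}(\Io - \beta_\e\Go_z\Do_\e)$, in the region where $\normo{\beta_\e\Go_z\Do_\e}<1$ one has the absolutely convergent expansion
$$
\Ro_{\e,z}=\sum_{k=0}^\infty \beta_\e^k(\Go_z\Do_\e)^k\Go_z.
$$
Substituting the decomposition $\Do_\e=\sum_{i<j}\So^*_{\e ij}\phi\phi\So_{\e ij}$ from~\eqref{e.DoSS} expresses each term as a sum over arbitrary $k$-tuples $((i_t,j_t))_{t=1}^k$ of ordered pairs.

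The combinatorial core is to decompose each such tuple into its maximal runs of consecutive identical pairs, yielding a compressed sequence $\Vec{(i,j)}=((i_a,j_a))_{a=1}^m\in\diag(n,m)$ together with positive block lengths $\ell_1,\ldots,\ell_m$ with $\ell_1+\cdots+\ell_m=k$. A block of length $\ell$ attached to pair $(i,j)$ yields the telescoping factor
$$
\So^*_{\e ij}\phi\,(\phi\So_{\e ij}\Go_z\So^*_{\e ij}\phi)^{\ell-1}\,\phi\So_{\e ij}.
$$
A brief argument using the permutation symmetry of $\Go_z$ on $\R^{2n}$ (its kernel $G_z(x-x')$ depends only on $|x-x'|_{\R^{2n}}$, hence is invariant under permutations of the $n$ coordinates) shows that $\M:=\phi\So_{\e ij}\Go_z\So^*_{\e ij}\phi$ is \emph{independent} of $(i,j)$, since $S_{\e ij}$ and $S_{\e 12}$ differ only by such a coordinate permutation. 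Thus $\M=\phi\So_{\e 12}\Go_z\So^*_{\e 12}\phi$ uniformly, and summing the geometric series inside each block gives $\sum_{\ell\geq 1}\beta_\e^\ell\M^{\ell-1}=(\beta_\e^{-1}\Io-\M)^{-1}$. Reindexing the Neumann sum by $m\geq 1$ and $\Vec{(i,j)}\in\diag(n,m)$ then reproduces~\eqref{e.resolvent.e} exactly, with the between-block connector $\phi\So_{\e i_{k-1}j_{k-1}}\Go_z\So^*_{\e i_kj_k}\phi$ arising from the single copy of $\Go_z$ separating adjacent runs.

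To close the argument we need a single uniform operator-norm bound. Using the self-adjoint conjugation $\Go_z\Do_\e\sim\Go_z^{1/2}\Do_\e\Go_z^{1/2}=\sum_{i<j}(\phi\So_{\e ij}\Go_z^{1/2})^*(\phi\So_{\e ij}\Go_z^{1/2})$ together with $\normo{\Go_z^{1/2}}^2\leq|\Re(z)|^{-1}$ (from positivity of $\Ho_\fr$) and the fact that $\normo{\phi\So_{\e ij}}=\normo{\So_{\e ij}\phi}$ for multiplication by the bounded $\phi=\sqrt\Phi$, one obtains
$$
\normo{\beta_\e\Go_z\Do_\e}\leq\frac{\beta_\e}{|\Re(z)|}\sum_{i<j}\normo{\So_{\e ij}\phi}^2\leq\frac{\beta_\e}{|\Re(z)|}\Big(1+\sum_{i<j}\normo{\So_{\e ij}\phi}\Big)^2<1
$$
in the stated region, and the same estimate simultaneously controls $\normo{\beta_\e\M}<1$. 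This justifies both the outer Neumann expansion and each inner geometric series, and by absolute convergence of the resulting triple sum (over $m$, over $\Vec{(i,j)}\in\diag(n,m)$, and over block lengths) Fubini legitimizes the rearrangement. The main obstacle is the bookkeeping: one has to check that the rearranged series matches~\eqref{e.resolvent.e} slot by slot, i.e.\ that the $(\beta_\e^{-1}\Io-\M)^{-1}$ factors land in exactly the $m$ positions dictated by the diagram $\Vec{(i,j)}$ and that the connector $\phi\So_{\e i_{k-1}j_{k-1}}\Go_z\So^*_{\e i_k j_k}\phi$ carries the label transition between consecutive distinct blocks.
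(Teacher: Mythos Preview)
Your proposal is correct and follows essentially the same strategy as the paper's proof: both rely on the geometric-series expansion, the permutation symmetry $\phi\So_{\e ij}\Go_z\So^*_{\e ij}\phi=\phi\So_{\e 12}\Go_z\So^*_{\e 12}\phi$, and the run-length encoding of pair sequences into elements of $\diag(n,m)$; the only difference is that the paper runs the argument in reverse, starting from the right-hand side of~\eqref{e.resolvent.e}, expanding each $(\beta_\e^{-1}\Io-\M)^{-1}$ as a geometric series, and then collapsing the resulting unconstrained sum over pairs back to the Neumann series $\sum_{m\ge0}\Go_z(\beta_\e\Do_\e\Go_z)^m$. One small caveat: your conjugation $\Go_z\Do_\e\sim\Go_z^{1/2}\Do_\e\Go_z^{1/2}$ does not bound the \emph{operator norm} for complex $z$ (similarity preserves spectral radius, not norm), but the direct estimate $\normo{\beta_\e\Go_z\Do_\e}\le\beta_\e\normo{\Go_z}\sum_{i<j}\normo{\phi\So_{\e ij}}^2\le\beta_\e(-\Re z)^{-1}(\sum_{i<j}\normo{\phi\So_{\e ij}})^2$ already gives what you need under the stated hypothesis on $z$.
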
%
\begin{remark}
As stated, Lemma~\ref{l.resolvent.e} holds for $ \Re(z) < -C_1(\e,n) $, with a threshold $ C_1(\e,n) $ that depends on $ \e $.
This may not seem  useful as $ \e\to0 $, 
however, as we will show later in Section~\ref{sect.resolcnvg}, 
the r.h.s.\ of \eqref{e.resolvent.e} is actually analytic (in norm) in $ \{z: \Re(z)<-C_2(n)\} $, 
for some threshold $ C_2(n)<\infty $ that is \emph{independent} of $ \eps $. 
It then follows immediately (as argued in Section~\ref{sect.resolcnvg}) that~\eqref{e.resolvent.e} extends to all $ \Re(z)< -C_2(n) $.
\end{remark}
\begin{proof}
To simplify notation, set $ \tilde{\So}_{ij} := \beta_\e^{1/2}\phi \So_{\e ij} $, $ \tilde{\So}^{ij} := (\tilde{\So}_{ij})^* = \beta^{1/2}_\e\So^*_{\e ij}\phi $,
and $ \tilde{\Soo}_{ij}^{k\ell} := \tilde{\So}_{ij}\Go_z\tilde{\So}^{k\ell} $.
In~\eqref{e.resolvent.e.med}, factor $ \beta_\e^{-1} $ from the inverse.
Under the preceding shorthand notation, we rewrite~\eqref{e.resolvent.e} as
\begin{align}
\label{e.resolvent.e.}
	\Ro_{\e,z}
	=
	\Go_{z} 
	+ 
	\sum_{m=1}^\infty \ \sum_{\Vec{(i,j)}\in\diag(n,m)}
	\Go_z \tilde{\So}^{i_1j_1}
	\cdot
	\big( \Io - \tilde{\Soo}_{12}^{12} \big)^{-1}
	\prod_{k=2}^{m}  \tilde{\Soo}_{i_{k-1}j_{k-1}}^{i_kj_k} \, \big( \Io - \tilde{\Soo}_{12}^{12} \big)^{-1}   
	\cdot
	\tilde{\So}_{i_{m}j_{m}} \Go_z .
\end{align}
Our goal is to expand the inverse in~\eqref{e.resolvent.e.},
and then simplify the result to match $ (\Ho_\e-z\Io)^{-1} $.

To expand the inverse in~\eqref{e.resolvent.e.}, we  utilize the geometric series
$
	(\Io - \geno)^{-1} = 
	\Io + \sum_{k=1}^\infty \geno^k$,
valid for $ \normo{\geno}<1 $.
Indeed, $ \normo{\Go_z} \leq 1/(-\Re(z)) $, so under the assumption on the range of $ \Re(z) $ we have
$ \normo{\tilde{\So}_{12}^{12}} <1 $.
Using the geometric series for $ \geno=\tilde{\Soo}_{12}^{12} $,
and inserting the result into~\eqref{e.resolvent.e.} gives
\begin{align}
\label{e.resolvent.e..}
	\Ro_{\e,z}
	=&
	\Go_{z} 
	+ 
	\sum
\Go_z \tilde{\So}^{i_1j_1}
	\underbrace{ \tilde{\Soo}_{12}^{12} \cdots \tilde{\Soo}_{12}^{12} }_{\ell_1}
	\tilde{\Soo}_{i_{1}j_{1}}^{i_2j_2} 
	\underbrace{ \tilde{\Soo}_{12}^{12} \cdots \tilde{\Soo}_{12}^{12} }_{\ell_2}
	\tilde{\Soo}_{i_{2}j_{2}}^{i_3j_3} 
	\cdots
	\tilde{\Soo}_{i_{m-1}j_{m-1}}^{i_mj_m} 
	\underbrace{ \tilde{\Soo}_{12}^{12} \cdots \tilde{\Soo}_{12}^{12} }_{\ell_m}
	\tilde{\So}_{i_{m}j_{m}} \Go_z.
\end{align}
where the sum is over $\ell_1,\ldots,\ell_m \geq 0$, $\Vec{(i,j)}\in\diag(n,m)$,
and $m=1,2,\ldots$.
The sum converges absolutely in operator norm by our assumption on $ z $.
Since $ \Go_z $ acts symmetrically in the $ n $ components, we have $ \tilde{\Soo}_{12}^{12} =\tilde{\Soo}_{ij}^{ij} $, for any pair $ i<j $.
Use this property to rewrite~\eqref{e.resolvent.e..} as
\begin{align}
\label{e.resolvent.e...}
\begin{split}
	\Ro_{\e,z}
	=&
	\Go_{z} 
	+ 
	\sum
\Go_z \tilde{\So}^{i_1j_1}
	\underbrace{ \tilde{\Soo}_{i_1j_1}^{i_1j_1} \cdots \tilde{\Soo}_{i_1j_1}^{i_1j_1} }_{\ell_1}
	\tilde{\Soo}_{i_{1}j_{1}}^{i_2j_2} 
	\underbrace{ \tilde{\Soo}_{i_2j_2}^{i_2j_2} \cdots \tilde{\Soo}_{i_2j_2}^{i_2j_2} }_{\ell_2}
	\tilde{\Soo}_{i_{2}j_{2}}^{i_3j_3} 
	\cdots
	\tilde{\Soo}_{i_{m-1}j_{m-1}}^{i_mj_m} 
	\underbrace{ \tilde{\Soo}_{i_mj_m}^{i_mj_m} \cdots \tilde{\Soo}_{i_mj_m}^{i_mj_m} }_{\ell_m}
	\tilde{\So}_{i_{m}j_{m}} \Go_z.
\end{split}
\end{align}
The summation can be reorganized as
$\sum_{m'=1}^\infty \sum_{i_1<j_1} \cdots \sum_{i_{m'}<j_{m'}}(\ \Cdot \ ) $.
To see this, recall from~\eqref{e.diag.set.m} that $ \Vec{(i,j)}\in\diag(n,m) $
consists of pairs $ (i_k<j_k) $ under the constraint that consecutive pairs are non-repeating, i.e., $ (i_{k-1}<j_{k-1})\neq (i_{k}<j_{k}) $.
The r.h.s.\ of~\eqref{e.resolvent.e...} replenishes all possible repeatings of consecutive pairs,
and hence lifts the constraints imposed by $ \diag(n,m) $.
In the resulting sum, express $ \tilde{\Soo}_{k\ell}^{ij} = \tilde{\So}^{ij} \Go_z \tilde{\So}_{k\ell} $ to get
\begin{align*}
	\Ro_{\e,z}
	=
	\sum_{m=0}^\infty 
	\Go_z 
	\Big( \sum_{i<j} \tilde{\So}^{ij} \tilde{\So}_{ij} \Go_z  \Big)^m.
\end{align*}
From~\eqref{e.DoSS}, we have $ \sum_{i<j}\tilde{\So}^{ij} \tilde{\So}_{ij} = \beta_\e \Do_\e $,
hence
$ 	
	\Ro_{\e,z}
	=
	\Go_z(\Io-\beta_\e \Do_\e\Go_z )^{-1}.
$
Further $ \Go_z = (\Ho_\fr-z\Io)^{-1} $ gives
\begin{align*}
	\Ro_{\e,z}
	=
	(\Ho_\fr-z\Io)^{-1}\big(\Io-\beta_\e \Do_\e (\Ho_\fr-z\Io)^{-1} \big)^{-1}
	=
	\big(\Ho_\fr-z\Io-\beta_\e \Do_\e \big)^{-1}
	=
	\big(\Ho_\e - z \Io \big)^{-1}.
\end{align*}
This completes the proof.
\end{proof}

The resolvent identity~\eqref{e.resolvent.e} is the gateway to the $ \e\to 0 $ limit.
Roughly speaking, we will show that all terms in~\eqref{e.resolvent.e} converge to their limiting counterparts
in the expression of $ \Ro_z $ given in~\eqref{e.resolvent}.
The expression~\eqref{e.resolvent}, however, does not expose such a convergence very well.
This is so because some operators in~\eqref{e.resolvent} map one function space to a different one, 
(e.g., $ \So_{ij} $ maps functions of $ n $ components to $ n-1 $ components),
while \emph{all} operators in the sum over $ m $ in~\eqref{e.resolvent.e}  map $ \Lsp^2(\R^{2n}) $ to $ \Lsp^2(\R^{2n}) $.
We next rewrite~\eqref{e.resolvent} in a way that better compares with~\eqref{e.resolvent.e}.
To this end, consider the operators
\begin{align}
	\label{e.Po}
	&\Po: \Lsp^{2}(\R^{2n}) \to \Lsp^{2}(\R^{2n-2}),
	\quad
	(\Po v)(y_{2-n}) :=\int_{\R^{2}} \phi(y_1) v(y_1,y_{2-n}) \d y_1
\\
	&
	\phi\otimes\Cdot : \Lsp^{2}(\R^{2n-2}) \to \Lsp^{2}(\R^{2n}),
	\quad
	(\phi \otimes v)(y_1,y_{2-n}) := \phi(y_1) v(y_{2-n}).
\end{align}
Given that $ \phi\in\Csp^\infty_\cmp(\R^{2}) $, it is readily checked that $ \Po $ and $ \phi\otimes\Cdot $ are bounded operators.
Note that from $ \phi:=\sqrt{\Phi} $, $ \phi $ has unit norm, i.e., $ \int_{\R^2} \phi^2 \d y = 1 $.
From this we obtain $ \Po(\phi\otimes\geno) = \geno $, for a generic $ \geno: \Lsp^2(\R^{2n})\to\Lsp^2(\R^{2n-2}) $
or $ \geno: \Lsp^2(\R^{2n-2})\to\Lsp^2(\R^{2n-2}) $.
Using this property, we rewrite~\eqref{e.resolvent} as
\begin{subequations}
\label{e.resolvent.}
\begin{align}
	\label{e.resolvent.out}
	\Ro_{z}
	=& ~\Go_{z} +
	\sum_{m=1}^\infty \ \sum_{\Vec{(i,j)}\in\diag(n,m)}
	\big(\Go_z \So^{*}_{i_1j_1}\Po \big)
\\
	\label{e.resolvent.med}
	&
	\cdot 
	\big( \phi\otimes  4\pi(\Jo_{z}-\betaC\Io)^{-1} \Po \big) \, 
	\prod_{s=2}^{m} \Big( \big(\phi\otimes\So_{i_{s-1}j_{s-1}}\Go_z\So^*_{i_sj_s}\Po\big) \, \big( \phi\otimes 4\pi(\Jo_{z}-\betaC\Io)^{-1} \Po \big) \Big) 
\\
	\label{e.resolvent.in}
	&
	\cdot
	\big( \phi\otimes\So_{ij}\Go_z \big).
\end{align}
\end{subequations}
That is, we augment the missing $ y_1 $ dependence (in the operators $ \So_{ij} $, $ \So^*_{ij} $, etc.) 
along the subspace $ \C\phi\subset\Lsp^2(\R^2) $.
Equation~\eqref{e.resolvent.} gives a better expression for comparison with \eqref{e.resolvent.e}.%

For future references, let us setup some terminology for the operators in \eqref{e.resolvent.e} and~\eqref{e.resolvent.}.
We call the operators $ \So_{ij}\Go_z $ or $ \phi\otimes\So_{ij}\Go_z $ in~\eqref{e.resolvent.in}
the \textbf{limiting incoming operators},
and the operators $ \Go_z\So^*_{ij} $ or $ \Go_z\So^*_{ij}\Po $ in~\eqref{e.resolvent.out} the \textbf{limiting outgoing  operators}.
Slightly abusing language, we will use these phrases interchangeably to infer operators \emph{with and without} the action by $ \phi\otimes\Cdot $ or $ \Po $.
Similarly, we call the operators in~\eqref{e.resolvent.e.in} the \textbf{pre-limiting incoming  operators},
and the operators in~\eqref{e.resolvent.e.out} the \textbf{pre-limiting outgoing  operators}.
Next, with $ \Jo_z $ defined in~\eqref{e.Jo} in the following,
we refer to $ (\Jo_z - \betaC\Io) $ and $ (\beta_\e^{-1}\Io - \So_{\e 12}\Go_z\So^*_{\e 12}) $ 
as the \textbf{limiting} and \textbf{pre-limiting} \textbf{diagonal mediating operators}, respectively,
and refer to $ \So_{ij} \Go_z \So^*_{k \ell} $ and $ \So_{\e ij} \Go_z \So^*_{\e k \ell} $, with $ (i<j)\neq(k<\ell) $, as the the \textbf{limiting} and \textbf{pre-limiting} \textbf{off-diagonal mediating operators}.

As we will show in Section~\ref{sect.in.out}, 
each pre-limiting incoming and outgoing operator converges to its limiting counterpart, and,
as will show in Section~\ref{sect.offdiagonal}, 
each off-diagonal mediating operator converges to its limiting counterpart.
Diagonal mediating operators require a more delicate treatment because $ \beta_\e^{-1}\Io $ and $ \So_{\e ij}\Go_z\So^*_{\e ij} $ both diverge on their own,
and we need to cancel the divergence (and also to take an inverse) to obtain a limit.
This procedure, sometimes referred to as \emph{renormalization} in the physics literature, will be carried out in Section~\ref{sect.diagonal}.


\section{Incoming and outgoing operators}
\label{sect.in.out}
In this section we obtain the $ \e\to 0 $ limit of $ \phi\So_{\e ij}\Go_z $ and $ \Go_z\So^*_{\e ij}\phi $
to $\phi\otimes (\So_{ij}\Go_{z})$ and $\Go_{z}\So^*_{ij}\Po $.  The main result is
	stated in Lemma \ref{l.BGconvg}.
	
Recall the linear transformation $ S_{ij} $ and its induced operator $ \So_{ij} $ from~\eqref{e.S}--\eqref{e.So}.
Comparing \eqref{e.Se} and \eqref{e.S},
we see that $ S_{\e ij}(y_1,\ldots,y_n) \to S_{ij}(y_2,\ldots,y_n) $ as $ \e\to 0 $.
Namely, $ S_{ij} $ is the pointwise limit of $ S_{\e ij} $. 
This observation hints that $ \So_{ij} $ should be the limit of $ \So_{\e ij} $,
and the $ \e\to 0 $ limit of the incoming operator $ \phi\So_{\e ij}\Go_z $ 
should be obtained by replacing $ \So_{ij} $ with $ \So_{\e ij} $.
Note that, however, the operator $ \So_{ij} $ is \emph{unbounded},
 because, unlike $ S_{\e ij} $, $ S_{ij} $, maps between spaces of \emph{different} dimensions;
 the $ y_1 $ dependence in $ S_{\e ij}(y_1,\ldots,y_n) $ `vanishes' as $ \e\to 0 $ (c.f., \eqref{e.Se}). 

As the first step of building the limiting operators, we construct the domain of $ \So_{ij} $, along with its adjoint $ \So^{*}_{ij} $.
In the following we will often work in the Fourier domain.
Let $ \hat{f}(q) := \int_{\R^{d}} e^{-\img y\cdot q} f(y) \frac{\d q}{(2\pi)^{d/2}} $ 
denote Fourier transform of functions on $ \R^{d} $;
 the inverse Fourier transform then reads 
$ f(y) = \int_{\R^{d}} e^{\img y\cdot q} \hat{f}(q) \frac{\d q}{(2\pi)^{d/2}} $. 
Let $ \Ssp(\R^{d}) $ denote the space of Schwartz functions, namely the space of $ \Csp^{\infty} $ functions on $ \R^{d} $ with derivatives decaying at super-polynomial rates, c.f., \cite[Definition~7.3]{rudin91}.
In our subsequential analysis, $ d $ is typically $ 2n $ or $ 2(n-1) $.
Consider the (invertible) linear transformation $ \R^{2n}\to\R^{2n} $:
\begin{align}
	\label{e.M}
	\M_{ij}q := (q_3,\ldots,\underbrace{\tfrac12 q_2+q_1}_{i\text{-th}},\ldots,\underbrace{\tfrac12 q_2-q_1}_{j\text{-th}},\ldots,q_n).
\end{align}
For $ q\in\R^{2n} $, we write $ q_{i-j}:=(q_i,\ldots,q_j)\in\R^{2(j-i+1)}$,
and recall that $ q_{\bar{ij}}\in\R^{2n-4} $ is obtained from removing the $ i $-th and $ j $-th components of $ q $.
\begin{lemma}\label{l.So}
\begin{enumerate}[label=(\alph*), leftmargin=20pt]
\item []
\item \label{l.So.}
The operator $\So_{ij}$, given by equation~\eqref{e.So}, is unbounded from $ \Lsp^2(\R^{2n})$ to $\Lsp^2(\R^{2n-2}) $, with
\begin{align}
	\label{e.domSo}
	\Dom(\So_{ij}) := \Big\{ f\in\Lsp^2(\R^{2n}) \, : \, \int_{\R^2} \big|\hat{f}(\M_{ij}(q_1,\Cdot)) \big| \d q_1 \in \Lsp^2(\R^{2n-2}) \Big\}
	\subset
	\Lsp^2(\R^{2n}),
\end{align}
and for $ f\in \Dom(\So_{ij}) $, we have
\begin{align}
	\label{e.S.Four}
	\hat{\So_{ij} f}(q_{2-n})
	= 
	\int_{\R^2} \hat{f}(\M_{ij}q) \frac{d q_1}{2\pi}.
\end{align}
In addition, for all $ a>1 $, we have $ \Hsp^{a}(\R^{2n})\subset\Dom(\So_{ij}) $.
\item \label{l.So*}
The operator
\begin{align}
	\label{e.So*.Four}
	\hat{ \So^*_{ij} g}(p) := \tfrac{1}{2\pi} \hat{g}(p_i+p_j,p_{\bar{ij}})
\end{align}
maps $  \Lsp^2(\R^{2n-2}) \to \cap_{a>1} \Hsp^{-a}(\R^{2n}) $,
and is adjoint to $ \So_{ij} $ in the sense that
\begin{align}
	\label{e.So*}
	\ip{ \So^*_{ij} g }{ f } = \ip{g}{\So_{ij}f},
	\quad
	g \in \Lsp^2(\R^{2n-2}),
	\
	f \in \Hsp^a(\R^{2n}),
	\
	a>1.
\end{align}
\end{enumerate}
\end{lemma}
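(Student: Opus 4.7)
The plan is to work entirely on the Fourier side, where the action of $S_{ij}$ becomes a linear constraint ($p_i + p_j$ merges into a single new momentum) and the adjoint becomes a trivial insertion. All boundedness assertions will reduce to the single integrability $\int_{\R^2}(1+|q_1|^2)^{-a}\,dq_1 < \infty$ for $a>1$, which is the source of the loss of one derivative.

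First I would derive the formula~\eqref{e.S.Four} formally for Schwartz $f$. Writing $f(S_{ij}y) = \int e^{\mathbf{i}(S_{ij}y)\cdot p}\hat{f}(p)\,dp/(2\pi)^n$ and reading off that $(S_{ij}y)\cdot p = y_2\cdot(p_i+p_j) + \sum_{k\geq 3} y_k\cdot p_{\sigma(k)}$ for the appropriate reindexing $\sigma$, the Fourier integral over $y\in\R^{2n-2}$ produces $\delta$-functions enforcing $p_i+p_j = q_2$ and $p_{\sigma(k)} = q_k$ for $k\geq 3$. Parametrising the one remaining free momentum by $p_i = \tfrac12 q_2 + q_1$, $p_j = \tfrac12 q_2 - q_1$ (i.e., by $M_{ij}$ in~\eqref{e.M}), and tracking the $(2\pi)^{\pm d/2}$ prefactors, one recovers~\eqref{e.S.Four} with the stated $\tfrac{1}{2\pi}$ normalisation. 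I would then take~\eqref{e.S.Four} as the \emph{definition} of $\So_{ij}f$ on the domain~\eqref{e.domSo}, noting consistency with the original definition $(\So_{ij}u)(y)=u(S_{ij}y)$ via density of Schwartz functions.

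For the inclusion $\Hsp^a(\R^{2n})\subset\Dom(\So_{ij})$ ($a>1$), I would apply Cauchy--Schwarz pointwise in $q_{2-n}$:
\begin{align*}
\Big|\int_{\R^2}\hat{f}(M_{ij}q)\,dq_1\Big|^2 \leq \Big(\int_{\R^2}(1+|M_{ij}q|^2)^a|\hat{f}(M_{ij}q)|^2\,dq_1\Big)\Big(\int_{\R^2}(1+|M_{ij}q|^2)^{-a}\,dq_1\Big).
\end{align*}
Because $|M_{ij}q|^2 \geq 2|q_1|^2$, the second factor is bounded by a constant uniformly in $q_{2-n}$ exactly when $a>1$. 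Integrating the first factor in $q_{2-n}$ and performing the linear change of variables $p = M_{ij}q$ (whose Jacobian is a universal constant) produces a bound by $C\|f\|_{\Hsp^a}^2$, which proves (a).

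For part (b) I would verify~\eqref{e.So*} directly by Plancherel: starting from~\eqref{e.S.Four} and changing variables $(q_1,q_2)\mapsto(p_i,p_j)$ via $p_i=\tfrac12 q_2+q_1$, $p_j=\tfrac12 q_2-q_1$ (unit Jacobian in each $2$D coordinate block),
\begin{align*}
\langle g,\So_{ij}f\rangle_{\R^{2n-2}}=\tfrac{1}{2\pi}\int_{\R^{2n}}\overline{\hat{g}(p_i+p_j,p_{\bar{ij}})}\hat{f}(p)\,dp,
\end{align*}
which matches $\langle \So^*_{ij}g,f\rangle$ for $\So^*_{ij}$ given by~\eqref{e.So*.Four}. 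Boundedness $\Lsp^2 \to \Hsp^{-a}$ follows from the same change of variables:
$\|\So^*_{ij}g\|_{\Hsp^{-a}}^2 = \tfrac{1}{4\pi^2}\int(1+|p|^2)^{-a}|\hat{g}(p_i+p_j,p_{\bar{ij}})|^2\,dp$ becomes, after the substitution, $\int|\hat{g}(q_{2-n})|^2[\int_{\R^2}(1+2|q_1|^2+\tfrac12|q_2|^2+|q_{3-n}|^2)^{-a}\,dq_1]\,dq_{2-n}$, and the inner integral is bounded uniformly in $q_{2-n}$ for $a>1$. The only real obstacle is bookkeeping — keeping the reindexing $\sigma$, the coordinate transformations $M_{ij}$, and the Fourier normalisations consistent — but no deep analytic input is needed beyond the integrability $\int_{\R^2}(1+|q_1|^2)^{-a}dq_1<\infty$ for $a>1$, which transparently reflects that $\So_{ij}$ kills a two-dimensional direction.
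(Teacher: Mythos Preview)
Your proposal is correct and follows essentially the same approach as the paper: derive~\eqref{e.S.Four} for Schwartz functions via the Fourier transform, then prove $\Hsp^a\subset\Dom(\So_{ij})$ and $\So^*_{ij}:\Lsp^2\to\Hsp^{-a}$ by Cauchy--Schwarz together with the bound $|M_{ij}q|^2\geq 2|q_1|^2$, and verify~\eqref{e.So*} by the same change of variables $p=M_{ij}q$. The only cosmetic difference is that the paper derives~\eqref{e.S.Four} by first passing through the invertible map $S_{1ij}:=S_{\e ij}|_{\e=1}$ (so that $(S_{1ij}y)|_{y_1=0}=S_{ij}y_{2-n}$) rather than computing the phase $(S_{ij}y)\cdot p$ directly, but the content is the same.
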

\begin{proof}
\ref{l.So.}~Let us first show~\eqref{e.S.Four} for $ f\in \Ssp(\R^{2n}) $. 
On the Fourier transform of $ f $, perform the change of variables $ x=S_{1ij}y $, where $ S_{1ij}= S_{\e ij}|_{\e=1} $,
and the substitute $ p=\M_{ij}q $.
From~\eqref{e.Se}, it is readily checked that $ |\det(S_{1ij})| =1 $,
and from \eqref{e.M}, we have $ (S_{1ij}y)\cdot(\M_{ij}q) = y\cdot q $, so
\begin{align}
	\label{e.SoFour2}
	\hat{f}(M_{ij}q) 
	=  
	\int_{\R^{2n}} e^{-\img y\cdot q} f(S_{1ij}y) \frac{\d y}{(2\pi)^{n}}.
\end{align}
Our goal is to calculate the Fourier transform of $ f(S_{ij}\Cdot) $.
Comparing~\eqref{e.S} and \eqref{e.Se} for $ \e=1 $, we see that $ (S_{1ij}y)|_{y_1=0}=S_{ij}(y_{2-n}) $.
It is hence desirable to `remove' the $ y_1 $ variable on the r.h.s.\ of~\eqref{e.SoFour2}.
To this end, apply the identity
\begin{align*}
	\int_{\R^{2n-2}} g(0,y_{2-n}) e^{-\img q_{2-n}\cdot y_{2-n}} \frac{\d y_{2-n}}{(2\pi)^{n-1}}
	=
	\int_{\R^{2}} \hat{g}(q) \frac{\d q_1}{2\pi},
	\quad
	g\in\Ssp(\R^{2n})
\end{align*}
with $ g(\Cdot) = f(S_{1ij}\Cdot) $ to obtain
\begin{align*}
	\int_{\R^2} \hat{f}(M_{ij}q) \frac{\d q_1}{2\pi}
	&=  
	\int_{\R^{2n-2}} e^{-\img y_{2-n}\cdot q_{2-n}} \, f(S_{1ij}y)\big|_{y_1=0} \frac{\d y_{2-n}}{(2\pi)^{n-1}}
=  
	\int_{\R^{2n-2}} e^{-\img y_{2-n}\cdot q_{2-n}} \, f(S_{ij}y_{2-n}) \frac{\d y_{2-n}}{(2\pi)^{n-1}}.
\end{align*}
The last expression is $ \hat{\So_{ij}f}(q_{2-n}) $ by definition.
We hence conclude~\eqref{e.S.Four} for $ f\in\Ssp(\R^{2n}) $.
By approximation, it follows that $ \So_{ij} $ extends to an unbounded operator with domain~\eqref{e.domSo},
and the identity~\eqref{e.S.Four} extends to $ f\in\Dom(\So_{ij}) $.

Fix $ a>1 $, we proceed to show $ \Hsp^a(\R^{2n})\subset\Dom(\So_{ij}) $.
For $ f\in\Hsp^a(\R^{2n}) $, it suffices to bound
\begin{align}
	\label{e.domSo.check}
	\int_{\R^{2n-2}} \Big| \int_{\R^2} \big|\hat{f}(\M_{ij}q) \big| \d q_1 \Big|^2 \d q_{2-n}.
\end{align}
Within the integrals, multiply and divide by $ (\frac12|\M_{ij}q|^2+1)^{\frac{a}{2}} $.
Use $ \frac12|\M_{ij}q|^{2} \geq |q_1|^2 $ (as readily checked from~\eqref{e.M})
and apply the Cauchy--Schwarz inequality over the integral in $q_1$.
We then obtain
\begin{align}
\label{e.domSo.check.}
\begin{split}
	\eqref{e.domSo.check}
	&=
	\int_{\R^{2n-2}} \Big| \int_{\R^2} \frac{1}{(\frac12|\M_{ij}q|^2+1)^{\frac{a}{2}}}(\tfrac12|\M_{ij}q|^2+1)^{\frac{a}{2}}\big|\hat{f}(\M_{ij}q) \big| \d q_1 \Big|^2 \d q_{2-n}
\\
	&\leq
	\int_{\R^2} \Big( \frac{1}{|q_1|^2+1} \Big)^{a} \, \d q_1
	\,
	\norm{f}_{\Hsp^{a}(\R^{2n})}
	\leq 
	\frac{C}{a-1} \norm{f}_{\Hsp^{a}(\R^{2n})}.
\end{split}
\end{align}
This verifies $ \Hsp^a(\R^{2n})\subset\Dom(\So_{ij}) $.

\medskip

\noindent\ref{l.So*}~
That $ \So^*_{ij} $ maps $ \Lsp^2(\R^{2n-2}) $ to $ \cap_{a>1}\Hsp^{-a}(\R^{2n}) $
is checked by similar calculations as in~\eqref{e.domSo.check.}.
To check~\eqref{e.So*}, calculate the inner product $ \ip{ \So^*_{ij} g }{ f } $ in Fourier variables from~\eqref{e.So*.Four}.
Within the resulting integral, perform a change of variable $ p=\M_{ij}q $,
and use $ |\det(\M_{ij})|=1 $ and $ (p_i+p_{j},p_{\bar{ij}})=(\M_{ij}^{-1}p)_{2-n} $ (as readily checked from~\eqref{e.M}).
In the last expression $ (\M_{ij}^{-1}p)_{2-n} $ denotes the last $ (n-1) $ components of the vector $ \M_{ij}^{-1}p\in(\R^{2})^n $.
We then obtain
\begin{align*}
	\ip{ \So^*_{ij} g }{ f }
	=
	\int_{\R^{2n}} \bar{\hat{g}(p_i+p_j,p_{\bar{ij}})} \hat{f}(p) \frac{\d p}{2\pi}
	=
	\int_{\R^{2n}} \bar{\hat{g}(q_{2-n})} \hat{f}(\M_{ij}q) \frac{\d q}{2\pi}.
\end{align*}
From~\eqref{e.S.Four}, we see that the last expression matches $ \ip{ g }{ \So_{ij} f } $.
\end{proof}

Recall that, for each $ \Re(z)<0 $, $ \Go_z(\Lsp^2(\R^{2n})) = \Hsp^2(\R^{2n}) $.
This together with Lemma~\ref{l.So} implies that $ \So_{ij}\Go_z $ is defined on the entire $ \Lsp^{2}(\R^{2n}) $, with image in $ \Lsp^{2}(\R^{2n-2}) $, 
and that $ \Go_z\So^*_{ij} $ is defined on $ \Lsp^{2}(\R^{2n-2}) $, with image in $ \Lsp^{2}(\R^{2n}) $.
Informally,  $ \Go_z $ increases regularity by $ 2 $,
while  $ \So_{ij} $ and $ \So^*_{ij} $ both decrease regularity by $ -(1^+) $, as seen from Lemma~\ref{l.So}.
In total $ \So_{ij}\Go_z $ and $ \Go_z\So^*_{ij} $ have regularity exponent $ 2-(1^+)=1^->0 $.

We now establish a quantitative bound on the operator norm of the limiting operators $ \So_{ij}\Go_z $ and $ \Go_z\So^*_{ij} $.
\begin{lemma}\label{l.Goin}
For $ 1\leq i<j \leq n $ and $ \Re(z)<0 $,
$
	\normo{\So_{ij}\Go_z}=\normo{\Go_{\bar{z}}\So^*_{ij}} \leq C\,(\Re(-z))^{-1/2}.
$
\end{lemma}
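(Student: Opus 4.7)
The equality $\normo{\So_{ij}\Go_z}=\normo{\Go_{\bar z}\So^*_{ij}}$ should follow immediately from the fact that $\Go_{\bar z}\So^*_{ij}=(\So_{ij}\Go_z)^*$. Indeed, $\Go_z$ is Fourier multiplication by $(\frac12|p|^2-z)^{-1}$, whose Hilbert-space adjoint is multiplication by $(\frac12|p|^2-\bar z)^{-1}=\hat{\Go_{\bar z}}$, so that $\Go_z^{*}=\Go_{\bar z}$. Combined with Lemma~\ref{l.So}\ref{l.So*}, whose identity \eqref{e.So*} extends by density from $\Hsp^{a}(\R^{2n})$ to all of $\Lsp^{2}(\R^{2n})$ after composition with $\Go_z$ (whose image lies in $\Hsp^{2}\subset\Hsp^{a}$ for $a\in(1,2]$), the adjoint relation becomes a bona fide operator identity and the norm equality follows.

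For the quantitative bound, the plan is to pass to the Fourier side and apply Cauchy--Schwarz in the $q_1$ integration. Combining \eqref{e.S.Four} with $\widehat{\Go_z f}(p)=(\frac12|p|^2-z)^{-1}\hat f(p)$, I would write
\begin{align*}
  \widehat{\So_{ij}\Go_z f}(q_{2-n})
  =\int_{\R^{2}}\frac{\hat f(\M_{ij}q)}{\tfrac12|\M_{ij}q|^2-z}\,\frac{\d q_1}{2\pi}.
\end{align*}
A direct computation from \eqref{e.M} gives $\tfrac12|\M_{ij}q|^2=|q_1|^2+\tfrac12|p|^{2}_{2-n}$ where $p=(q_2,\ldots,q_n)$, so setting $\lambda:=-\Re(z)>0$ and $B:=\tfrac12|p|^{2}_{2-n}+\lambda\ge\lambda$ I get the pointwise bound $|\tfrac12|\M_{ij}q|^2-z|^{2}\ge(|q_1|^2+B)^{2}$.

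The Cauchy--Schwarz step then reads
\begin{align*}
  \bigl|\widehat{\So_{ij}\Go_z f}(q_{2-n})\bigr|^{2}
  \le\frac{1}{(2\pi)^{2}}
  \Bigl(\int_{\R^{2}}\frac{\d q_1}{(|q_1|^2+B)^{2}}\Bigr)
  \Bigl(\int_{\R^{2}}|\hat f(\M_{ij}q)|^{2}\d q_1\Bigr),
\end{align*}
and the $q_1$ integral in the first factor evaluates (via polar coordinates and $u=r^{2}$) to $\pi/B\le\pi/\lambda$, uniformly in $q_{2-n}$. Integrating in $q_{2-n}$ and performing the change of variable $p=\M_{ij}q$, whose Jacobian is $1$ as already noted in the proof of Lemma~\ref{l.So}, collapses the remaining integral to $\|\hat f\|_{\Lsp^{2}}^{2}=\|f\|_{\Lsp^{2}}^{2}$. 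This yields $\|\So_{ij}\Go_z f\|_{\Lsp^{2}}^{2}\le (4\pi\lambda)^{-1}\|f\|_{\Lsp^{2}}^{2}$, i.e.\ the desired bound with $C=(4\pi)^{-1/2}$.

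There is no real obstacle here: the only point that needs care is the reduction $|\tfrac12|\M_{ij}q|^2-z|\ge|q_1|^{2}+B$, which uses that the complex quantity $\tfrac12|\M_{ij}q|^2-z$ has real part bounded below by $|q_1|^2+B$, and that $|\zeta|\ge\Re(\zeta)$ whenever $\Re(\zeta)\ge 0$. Once this is in hand, the bound is dimension-independent in $n$ because all $n$-dependence of the integrand is absorbed into the parameter $B$, which only improves the estimate.
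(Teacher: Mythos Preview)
Your proof is correct and follows essentially the same route as the paper: both pass to the Fourier side via \eqref{e.S.Four}, apply Cauchy--Schwarz in the $q_1$ integral, and evaluate the resulting radial integral. The only cosmetic difference is that you compute the exact identity $\tfrac12|\M_{ij}q|^2=|q_1|^2+\tfrac12|p|^2_{2-n}$ and carry the extra term as $B\ge\lambda$, whereas the paper uses only the cruder lower bound $\tfrac12|\M_{ij}q|^2\ge|q_1|^2$; both yield the same final estimate.
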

\begin{proof}
That $ \normo{\So_{ij}\Go_z}=\normo{\Go_{\bar{z}}\So^*_{ij}} $ follows by~\eqref{e.So*}, so it is enough to bound $ \normo{\So_{ij}\Go_z} $.
Fix $ u\in\Lsp^2(\R^{2n}) $ and apply~\eqref{e.S.Four} for $ f=\Go_zu $ to get
\begin{align}
	\label{e.SG.fourier}
	\hat{ \So_{ij}\Go_z u }(q_{2-n}) = \int_{\R^2} \frac{\hat{u}(\M_{ij}q)}{\frac12|\M_{ij}q|^2-z} \frac{\d q_1}{2\pi}.
\end{align}
Calculate the norm of $ \So_{ij}\Go_z u $ from~\eqref{e.SG.fourier} gives
\begin{align*}
	\| \So_{ij}\Go_z u \|^2 
	= 
	\int_{\R^{2n-2}} \Big| \int_{\R^2} \frac{\hat{u}(\M_{ij}q)}{\frac12|\M_{ij}q|^2-z} \frac{\d q_1}{2\pi} \Big|^2 \d q_{2-n}.
\end{align*}
Apply the Cauchy--Schwarz inequality over the $ q_1 $ integration,
and within the result use $ \frac12|\M_{ij}q|^{2} \geq |q_1|^2 $ (as readily checked from~\eqref{e.M}) and $ \Re(z)<0 $.
We get 
\begin{align*}
	\| \So_{ij}\Go_z u \|^2 
	\leq
	\Big( \int_{\R^2} \frac{1}{(|q_1|^2+\Re(-z))^2} \frac{\d q_1}{(2\pi)^2} \Big)\, \norm{u}^2.
\end{align*}
The last integral over $ q_1 $ can be evaluated in polar coordinate to be $ \frac1{4\pi}\Re(-z) $ .
This completes the proof.
\end{proof}

Having built the limiting operator, our next step is to show the convergence.
In the course of doing so, we will often use a partial Fourier transform in the last $ n-1 $ components:
\begin{align}\label{pft}
	\pFou{f}(y_1,q_{2-n}) 
	:= 
	\int_{\R^{2n-2}} e^{-\img (y_2,\ldots,y_d)\cdot (q_2,\ldots,q_n)} f(y_1,\ldots,y_n) \prod_{i=2}^n \frac{\d y_i}{2\pi}.
\end{align}
Recall $ \So_{\e ij} $ from~\eqref{e.Soe}.
To prepare for the proof of the convergence, we establish an expression of $ \So_{\e ij}u $ in partial Fourier variables.

\begin{lemma}\label{l.Soe.Four}
For every $ 1\leq i < j \leq n $ and $ u\in\Ssp(\R^{2n}) $, we have
\begin{align}
	\label{e.Se.Four}
	\pFou{\So_{\e ij} u}(y_1,q_{2-n})
	=
	\int_{\R^2} e^{\img \e q_1\cdot y_1}\hat{u}(\M_{ij}q) \frac{d q_1}{2\pi}.
\end{align}
\end{lemma}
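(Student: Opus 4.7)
The plan is a direct verification by Fourier inversion, change of variables, and collapsing the partial Fourier transform into a Dirac mass.

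First I would write $u$ via Fourier inversion and compose with $S_{\e ij}$:
\[
(\So_{\e ij}u)(y) = u(S_{\e ij}y) = \int_{\R^{2n}} e^{\img (S_{\e ij}y)\cdot p}\,\hat u(p)\,\frac{\d p}{(2\pi)^n}.
\]
Then I would change variables $p=\M_{ij}q$. Two ingredients are needed: the Jacobian and the phase. A quick inspection of \eqref{e.M} shows $|\det \M_{ij}|=1$. The key algebraic identity is
\[
(S_{\e ij}y)\cdot(\M_{ij}q) \;=\; y_{2\text{-}n}\cdot q_{2\text{-}n} + \e\, y_1\cdot q_1,
\]
which I would verify component by component: the $i$-th and $j$-th contributions combine as
\[
(y_2+\tfrac{\e y_1}{2})(\tfrac{q_2}{2}+q_1)+(y_2-\tfrac{\e y_1}{2})(\tfrac{q_2}{2}-q_1) = y_2\cdot q_2 + \e y_1\cdot q_1,
\]
while the remaining components of $S_{\e ij}y$ and $\M_{ij}q$ pair $y_\ell$ with $q_\ell$ for $\ell=3,\ldots,n$.

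Plugging these into the integral gives
\[
(\So_{\e ij}u)(y) \;=\; \int_{\R^{2n}} e^{\img y_{2\text{-}n}\cdot q_{2\text{-}n}}\,e^{\img \e q_1\cdot y_1}\,\hat u(\M_{ij}q)\,\frac{\d q}{(2\pi)^n}.
\]
I would then apply the partial Fourier transform $\pFou{(\,\cdot\,)}(y_1,q'_{2\text{-}n})$ defined in \eqref{pft}, exchange the order of integration (justified by the Schwartz assumption on $u$, which makes $\hat u(\M_{ij}q)$ rapidly decaying in $q$), and recognize the inner integral in $y_{2\text{-}n}$ as
\[
\int_{\R^{2n-2}} e^{\img y_{2\text{-}n}\cdot(q_{2\text{-}n}-q'_{2\text{-}n})}\,\frac{\d y_{2\text{-}n}}{(2\pi)^{n-1}} \;=\; (2\pi)^{n-1}\,\delta(q_{2\text{-}n}-q'_{2\text{-}n}).
\]
Integrating out the delta function in $q_{2\text{-}n}$ yields exactly~\eqref{e.Se.Four}.

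There is no substantive obstacle; the whole content is the vector identity $(S_{\e ij}y)\cdot(\M_{ij}q)=y_{2\text{-}n}\cdot q_{2\text{-}n}+\e y_1\cdot q_1$, which is precisely the reason $\M_{ij}$ was chosen as the dual transformation to $S_{\e ij}$ (and reduces, at $\e=0$, to the computation already performed in the proof of Lemma~\ref{l.So}\ref{l.So.}, cf.~\eqref{e.SoFour2}).
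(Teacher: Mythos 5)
Your proof is correct, and it is essentially the same argument as the paper's, organized in the opposite direction: the paper first computes the full Fourier transform $\hat{\So_{\e ij}u}(q)=\e^{-2}\hat u(\M_{\e ij}q)$ by changing variables $x=S_{\e ij}y$, then partially inverts in $q_1$ and rescales $q_1\mapsto\e q_1$, whereas you start from Fourier inversion of $u$, change variables $p=\M_{ij}q$ directly, and then collapse the $y_{2\text{-}n}$ integral into a delta. The algebraic core is the same dual-pairing identity between $S_{\e ij}$ and $\M_{ij}$ (the paper writes it as $y\cdot q=(\M_{\e ij}q)\cdot(S_{\e ij}y)$, which becomes your $(S_{\e ij}y)\cdot(\M_{ij}q)=y_{2\text{-}n}\cdot q_{2\text{-}n}+\e\,y_1\cdot q_1$ after the rescaling), and the delta-function step in your version is just the Fourier inversion that the paper encodes in its formula for $\pFou{\So_{\e ij}f}$.
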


\begin{proof}
A partial Fourier transform can be obtained by inverting a full transform in the first component:
\begin{align}
	\label{e.pinverse}
	\pFou{\So_{\e ij} f}(y_1,q_{2-n}) = \int_{\R^{2}} \hat{\So_{\e ij} f}(q) e^{\img y_1\cdot q_1} \frac{\d q_1}{2\pi}. 
\end{align}
We write the full Fourier transform as $ \hat{\So_{\e ij} f}(q) = \int_{\R^{2n}} e^{-\img y\cdot q} f(S_{\e ij}y) \frac{\d y}{(2\pi)^n} $.
We wish to perform a change of variable $ x=S_{\e ij}y $.
Doing so requires understanding how $ (y\cdot q) $ transform accordingly.
Defining
\begin{align*}
	\M_{\e ij}q := (q_3,\ldots,\underbrace{\tfrac12 q_2+\e^{-1}q_1}_{i\text{-th}},\ldots,\underbrace{\tfrac12 q_2-\e^{-1}q_1}_{j\text{-th}},\ldots,q_n),
\end{align*}
it is readily checked that $ y\cdot q = (\M_{\e ij} q)\cdot(S_{\e ij}y) $.
Given this, we perform the change of variable $ x=S_{\e ij}y $. 
With $ |\det(S_{\e ij})|=\e^2 $, we now have
\begin{align}
	\label{e.fourier1}
	\hat{\So_{\e ij} f}(q) = \e^{-2} \int_{\R^{2n}} e^{-\img (\M_{\e ij} q)\cdot x} f(x) \frac{\d x}{(2\pi)^n} = \e^{-2} \hat{f}(\M_{\e ij} q).
\end{align}
Inserting~\eqref{e.fourier1} into the r.h.s.\ of~\eqref{e.pinverse},
and performing a change of variable $ q_1 \mapsto \e q_1 $, under which $M_{\eps ij }q \mapsto M_{ij}q$,
we conclude the desired result \eqref{e.Se.Four}.
\end{proof}

We now show the convergence. Recall $ \Po $ from~\eqref{e.Po}.
\begin{lemma}\label{l.BGconvg}
For each $ i<j $ and $ \Re(z)<0 $, we have
\begin{align*}%
	\NOrmo{ \phi\So_{\e ij}\Go_z - \phi\otimes (\So_{ij}\Go_{z}) }
	+
	\NOrmo{ \Go_z\So^*_{\e ij}\phi - \Go_{z}\So^*_{ij}\Po }
	\leq
	C\,%
	\e^\frac12 \, (-\Re(z))^{-1/4}
	\longrightarrow 0,
	\quad \text{as } \e \to 0.
\end{align*}
\end{lemma}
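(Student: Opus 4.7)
\medskip

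\noindent\textbf{Proof plan for Lemma~\ref{l.BGconvg}.}
The plan is to compute both operators in partial Fourier variables using Lemmas~\ref{l.So} and~\ref{l.Soe.Four}, exhibit the difference as a single Fourier integral with the simple factor $(e^{\img\eps q_1\cdot y_1}-1)$, and then extract the $\eps^{1/2}$ rate via an elementary interpolation. For the first difference, fix $u\in\Ssp(\R^{2n})$ and apply~\eqref{e.Se.Four} together with~\eqref{e.S.Four} to write
\begin{align*}
	\pFou{\big(\phi\So_{\e ij}\Go_z - \phi\otimes\So_{ij}\Go_z\big)u}(y_1,q_{2-n})
	=
	\phi(y_1)\int_{\R^2} \big(e^{\img\eps q_1\cdot y_1}-1\big) \frac{\hat u(\M_{ij}q)}{\tfrac12|\M_{ij}q|^2-z}\,\frac{\d q_1}{2\pi}.
\end{align*}
I would then apply Cauchy--Schwarz in $q_1$ against the factor $(\tfrac12|\M_{ij}q|^2-z)^{-1}$, use the pointwise bound $|e^{\img x}-1|^2\le 2|x|$ (an interpolation between $|e^{\img x}-1|\le 2$ and $\le |x|$) to get $|e^{\img\e q_1\cdot y_1}-1|^2\le 2\eps|q_1||y_1|$, and bound $\tfrac12|\M_{ij}q|^2\geq|q_1|^2$, so that
\begin{align*}
	\int_{\R^2} \frac{|e^{\img\eps q_1\cdot y_1}-1|^2}{(\tfrac12|\M_{ij}q|^2-\Re z)^2}\,\d q_1
	\leq
	C\eps\,|y_1|\int_{\R^2}\frac{|q_1|}{(|q_1|^2-\Re z)^2}\,\d q_1
	\leq
	C\eps\,|y_1|\,(-\Re z)^{-1/2}.
\end{align*}

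Integrating the squared partial Fourier difference in $(y_1,q_{2-n})$ and using Plancherel in $y_{2-n}\leftrightarrow q_{2-n}$, the remaining factor is $\int_{\R^{2n}}|\hat u(\M_{ij}q)|^2\,\d q$, which equals $\|u\|^2$ via the unimodular change of variables $p=\M_{ij}q$. The $y_1$-integral $\int|\phi(y_1)|^2|y_1|\,\d y_1$ is finite since $\phi\in\Csp^\infty_\cmp$. Combining these bounds and taking the square root yields
\begin{align*}
	\Normo{\phi\So_{\e ij}\Go_z-\phi\otimes(\So_{ij}\Go_z)}
	\leq
	C\eps^{1/2}(-\Re z)^{-1/4},
\end{align*}
and this extends to all $u\in\Lsp^2(\R^{2n})$ by density of $\Ssp(\R^{2n})$.

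For the adjoint difference, I would observe that $(\phi\otimes\,\Cdot\,)^*=\Po$ (an immediate check from the definition~\eqref{e.Po}) and $(\phi\,\Cdot\,)^*=\phi\,\Cdot$, so that
\begin{align*}
	\big(\phi\So_{\e ij}\Go_z-\phi\otimes(\So_{ij}\Go_z)\big)^*
	=
	\Go_{\bar z}\So^*_{\e ij}\phi-\Go_{\bar z}\So^*_{ij}\Po.
\end{align*}
Since the operator norm is invariant under taking adjoints and $\Re(\bar z)=\Re(z)$, the same bound applies, giving the second half of the claim.

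The main obstacle, really the only delicate point, is the choice of interpolation exponent between the two bounds on $|e^{\img x}-1|$: one needs exactly the power that makes the resulting $q_1$-integral $z$-integrable at infinity while still yielding a fractional power of $\eps$. The exponent $1$ balances these: any smaller power loses convergence of the $q_1$ integral near infinity, any larger power loses the full rate $\eps^{1/2}$. Everything else is an application of Cauchy--Schwarz and Plancherel.
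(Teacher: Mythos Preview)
Your proof is correct and follows essentially the same route as the paper: compute the partial Fourier transform of the difference via~\eqref{e.Se.Four} and~\eqref{e.S.Four}, apply Cauchy--Schwarz in $q_1$, use $\tfrac12|\M_{ij}q|^2\geq|q_1|^2$, and reduce the adjoint case by duality. The only cosmetic difference is that the paper bounds $|e^{\img\e y_1\cdot q_1}-1|\leq C((\e|q_1|)\wedge 1)$ (using compact support of $\phi$) and then splits the resulting $q_1$-integral into two regions, whereas you use the single interpolation inequality $|e^{\img x}-1|^2\leq 2|x|$ to arrive directly at $\int_{\R^2}\frac{|q_1|}{(|q_1|^2+a)^2}\,\d q_1\leq Ca^{-1/2}$; both yield the same bound $C\e(-\Re z)^{-1/2}$ on $\norm{u'}^2$.
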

\begin{proof}
It suffices to consider %
$ \phi\So_{\e ij}\Go_z $ since $ \Go_z\So^*_{\e ij}\phi = (\phi\So_{\e ij}\Go_{\bar{z}})^* $ 
and $\Go_{z} \So^*_{ij}\Po = (\phi\otimes(\So_{ij}\Go_{\bar{z}}))^* $. 
 Fix $ u\in\Ssp(\R^{2n}) $, and, to simplify notation, let $ u' := (\phi\So_{\e ij} \Go_z -\phi\otimes(\So_{ij}\Go_{z})) u $.
We use~\eqref{e.SG.fourier} and \eqref{e.Se.Four} to calculate the partial Fourier transform of $ u' $ as
\begin{align*}
	\pFou{u'}(y_1,q_{2-n})
	=
	\phi(y_1) \int_{\R^2} \frac{e^{\img \e y_1\cdot q_1}-1}{\frac12|\M_{ij}q|^2-z} \hat{u}(M_{ij}q) \frac{\d q_1}{2\pi}.
\end{align*}
From this we calculate the norm of $ u' $ as
\begin{align*}
	\norm{u'}^2 = \int_{\R^{2n}} |\pFou{u'}(y_1,q_{2-n})|^2 \d y_1 \d q_{2-n}
	=
	\int_{\R^{2n}} \Big| \phi(y_1)\int_{\R^2} 
	\frac{ e^{\img \e y_1\cdot q_1}-1 }{ \frac12|\M_{ij}q|^2-z } \hat{u}(M_{ij}q) \frac{\d q_1}{2\pi} \Big|^2 
	\d y_1 \d q_{2-n}.
\end{align*}
Recall that, by assumption, $ \phi \in \Csp^\infty_\cmp(\R^{2}) $ is fixed, so $ |\phi(y_1)| \leq C \1_{\{|y_1|\leq C\}} $.
For $ |y_1|\leq C $ we have $ |e^{\img \e y_1\cdot q_1}-1| \leq C \, ((\e|q_1|)\wedge 1) $.
Using this and $ |\M_{ij}q|^2 \geq 2 |q_1|^2 $ (as verified from~\eqref{e.M}), we have
\begin{align*}
	\norm{u'}^2
	\leq
	C
	\int_{\R^{2n-2}} \Big( \int_{\R^2} \frac{ (\e|q_1|)\wedge 1 }{ |q_1|^2-\Re(z) } |\hat{u}(M_{ij}q)| \frac{\d q_1}{2\pi} \Big)^2 \d q_{2-n}
	\leq
	C
	\norm{u}^2 \, \int_{\R^2} \Big( \frac{ (\e|q_1|)\wedge 1 }{ |q_1|^2-\Re(z) } \Big)^{2} \, \d q_1.	
\end{align*}
Set $ -\Re(z)=a>0 $ to simplify notation.
We perform a change of variable $ q_1\mapsto \sqrt{a}q_1 $ in the last integral to get
$ \frac{1}{a}\int_{\R^2} \frac{(\eps\sqrt{a}|q_1|)^2\wedge 1}{(|q_1|^2+1)^2} \, \d q_1$.
Decompose it according to $ |q_1| < \e^{1/2}a^{1/4} $ and $ |q_1| > \e^{1/2}a^{1/4} $.
For the former use $ \frac{(\eps\sqrt{a}|q_1|)^2\wedge 1}{(|q_1|^2+1)^2} \leq 1 $,
and for the latter use $ (\eps\sqrt{a}|q_1|)^2\wedge 1 \leq (\eps\sqrt{a}|q_1|)^2 $.
It is readily checked that the integrals are both bounded by $ C\eps a^{-1/2} $.
\end{proof}

\section{Off-diagonal mediating operators}
\label{sect.offdiagonal}
To get a rough idea of how the mediating operators (those in~\eqref{e.resolvent.e.med}) should behave as $ \e\to 0 $,
we perform a regularity exponent count similar to the discussion just before Lemma~\ref{l.Goin}.
Recall that $ \Go_z $ increases regularity by  $ 2 $,
while  $ \So_{ij} $ and $ \So^*_{k\ell} $ \emph{decrease} regularity by $ -(1^+) $.
\emph{Formally} the regularity of $ \So_{ij} \Go_z \So^*_{k\ell} $ adds up to $ 2-(1^+)-(1^+)=0^-<0 $.
This being the case, one might expect $ \So_{\e ij} \Go_z \So^*_{\e k\ell} $ to diverge, in a somewhat marginal way, as $ \e\to 0 $.%

As we will show in the next section, the diagonal operator $ \So_{\e 12} \Go_z \So^*_{\e 12} $ diverges logarithmically in $ \e $. 
This divergence, after a suitable manipulation, cancels the relevant, leading order divergence in $ \beta_\e^{-1}\Io $
(recall from~\eqref{e.betaeps} that $ \beta_\e^{-1}\to\infty $).
On the other hand, for each $ (i<j)\neq(k<\ell) $, the off-diagonal operator $ \So_{\e ij} \Go_z \So^*_{\e k\ell} $ converges.
This is not an obvious fact, cannot be teased out from the preceding regularity counting,
and is ultimately due to an inequality derived in \cite[Equation~(3.2)]{dell94}.
We treat the off-diagonal terms in this section.

We begin by building the limiting operator.

\begin{lemma}\label{l.off.diagonal}
Fix $ (i<j)\neq (k<\ell) $ and $ \Re(z)<0 $.
We have that $ \Go_z\So^{*}_{k\ell}(\Lsp^2(\R^{2n-2})) \subset \Dom(\So_{ij}) $, 
so $ \So_{ij}\Go_z\So^*_{k\ell} $ maps $ \Lsp^2(\R^{2n-2})$ to $\Lsp^2(\R^{2n-2}) $.
Furthermore, $ \normo{ \So_{ij}\Go_z\So^*_{k\ell} } \leq C $ and
\begin{align}
	\label{e.GbiQ.Four}
	 \Ip{g}{ \So_{ij}\Go_z\So^*_{k\ell} f}
	&=
	\int_{\R^{2n}} \bar{\hat{g}(p_i+p_j,p_{\bar{ij}})}
	\frac{ 1 }{\frac12|p|^2-z}
	\hat{f}(p_{k}+p_\ell,p_{\bar{k\ell}}) 
	\ \frac{\d p}{(2\pi)^2},
\end{align}
for $ f,g \in \Lsp^2(\R^{2n-2}) $.
\end{lemma}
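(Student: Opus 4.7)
My plan is to establish the Fourier identity \eqref{e.GbiQ.Four} and the operator-norm bound simultaneously, and then to deduce the domain inclusion $\Go_z\So^*_{k\ell}(\Lsp^2)\subset\Dom(\So_{ij})$ by density.

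For $f\in\Ssp(\R^{2n-2})$, Lemma~\ref{l.So}\ref{l.So*} yields
\[
	\hat{\Go_z\So^*_{k\ell}f}(p) = \frac{1}{2\pi}\cdot\frac{\hat{f}(p_k+p_\ell,\,p_{\bar{k\ell}})}{\tfrac12|p|^2-z}.
\]
To verify $\Go_z\So^*_{k\ell}f\in\Dom(\So_{ij})$ via the criterion \eqref{e.domSo}, I would substitute $p = M_{ij}q$ and use $\tfrac12|M_{ij}q|^2 = |q_1|^2 + \tfrac14|q_2|^2 + \tfrac12\sum_{m\geq 3}|q_m|^2$. The structural input is the hypothesis $(i<j)\neq(k<\ell)$: it forces the vector $((M_{ij}q)_k+(M_{ij}q)_\ell,(M_{ij}q)_{\bar{k\ell}})$ to depend on $q_1$ at most affine-linearly (since at most one of $k,\ell$ coincides with one of $i,j$), in contrast to the quadratic dependence of the denominator. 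A Cauchy--Schwarz in $q_1$ with a mild weight then gives \eqref{e.domSo}. Applying $\So_{ij}$ via \eqref{e.S.Four} and pairing against $g$ via \eqref{e.So*} produces the identity \eqref{e.GbiQ.Four} for Schwartz $f,g$.

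For the operator-norm bound, via \eqref{e.GbiQ.Four} it suffices to show that the bilinear form
\[
	B(g,f) := \int_{\R^{2n}}\overline{\hat{g}(p_i+p_j,\,p_{\bar{ij}})}\,\frac{\hat{f}(p_k+p_\ell,\,p_{\bar{k\ell}})}{\tfrac12|p|^2-z}\,\frac{dp}{(2\pi)^2}
\]
satisfies $|B(g,f)|\leq C\|g\|_2\|f\|_2$. I would split the kernel as $(\tfrac12|p|^2-z)^{-1} = \omega(p)^{1/2}\cdot\omega(p)^{-1/2}(\tfrac12|p|^2-z)^{-1}$ for a carefully chosen positive weight $\omega$, apply Cauchy--Schwarz, and then change variables via $p\mapsto M_{ij}q$ on the $g$-side and $p\mapsto M_{k\ell}q'$ on the $f$-side. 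This reduces matters to showing that $\omega$ can be chosen so that the resulting ``fiber'' integrals in the respective transverse directions $q_1$ and $q'_1$ are uniformly bounded in $q_{2-n}$ and $q'_{2-n}$. The hypothesis $(i<j)\neq(k<\ell)$ ensures that these two fiber directions are genuinely distinct in $\R^{2n}$, which is the structural freedom needed to design such a weight. The construction is the two-dimensional analogue of the Schur-type estimate \cite[Eq.~(3.2)]{dell94} carried out in \cite[Sec.~3]{dimock04}, with a small case distinction according to whether $|\{i,j\}\cap\{k,\ell\}| = 0$ or $1$.

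Given the bound, density of $\Ssp$ in $\Lsp^2$ extends \eqref{e.GbiQ.Four} to arbitrary $f,g\in\Lsp^2(\R^{2n-2})$ and promotes $\So_{ij}\Go_z\So^*_{k\ell}$ to a bounded operator on $\Lsp^2(\R^{2n-2})$; in particular $\Go_z\So^*_{k\ell}f\in\Dom(\So_{ij})$ for every $f\in\Lsp^2$. The main obstacle is the design of the weight $\omega$ in the Schur step, because in two dimensions the naive choice $\omega\equiv 1$ produces a logarithmic divergence $\int dq_1/(|q_1|^2+a)$ in the fiber integral; one must simultaneously tame this divergence on both sides, and this is precisely the step at which the off-diagonal hypothesis $(i<j)\neq(k<\ell)$ (as opposed to equality) becomes essential and distinguishes the present lemma from the diagonal case handled in the next section.
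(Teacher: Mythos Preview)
Your approach coincides with the paper's: the paper cites the key inequality
\[
\sup_{\alpha>0}\int_{\R^{2n}}\frac{|\hat g(p_i+p_j,p_{\bar{ij}})|\,|\hat f(p_k+p_\ell,p_{\bar{k\ell}})|}{|p|^2+\alpha}\,\d p\le C\,\|g\|\,\|f\|
\]
from \cite[Eqs.~(3.1),(3.3),(3.4),(3.6)]{dell94} as a black box (this is \eqref{e.dell}), then reads off domain inclusion and \eqref{e.GbiQ.Four} from the Fourier formula \eqref{e.So*.Four}; you instead sketch how to re-derive that inequality via the weighted Schur test, which is precisely what \cite{dell94} does.

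One small logical point: the domain inclusion $\Go_z\So^*_{k\ell}f\in\Dom(\So_{ij})$ for general $f\in\Lsp^2$ does not follow from ``density plus bounded extension'' alone, since $\Dom(\So_{ij})$ is an \emph{absolute} integrability condition and a bounded extension of $\So_{ij}\Go_z\So^*_{k\ell}$ need not a priori agree with the unbounded $\So_{ij}$ applied to $\Go_z\So^*_{k\ell}f$. Rather, it follows because your Schur argument actually bounds the integral with absolute values inside (this is what Cauchy--Schwarz with a positive weight produces), i.e.\ you recover \eqref{e.dell} itself, from which the criterion \eqref{e.domSo} is verified directly for every $f\in\Lsp^2$---exactly as the paper argues.
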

\begin{proof}
The inequalities derived in \cite[Equations~(3.1), (3.3), (3.4), (3.6)]{dell94} translate, under our notation, into
\begin{align}
	\label{e.dell}
	\sup_{\alpha> 0} \int_{\R^{2n}} \frac{|\hat{g}(p_i+p_j,p_{\bar{ij}})|\,|\hat{f}(p_k+p_\ell,p_{\bar{k\ell}})|}{|p|^2+\alpha} \d p
	\leq
	C\, 
	\norm{g} 
	\,
	\norm{f}, 
\end{align}
for all $ (i<j)\neq(k<\ell) $ and $ f,g \in \Lsp^2(\R^{2n-2}) $. Also, from \eqref{e.So*.Four} we have
\begin{align}
	\label{e.S*ijS*kl}
	\hat{\So^*_{ij}g}(p) = \tfrac{1}{2\pi} \hat{g}(p_i+p_j,p_{\bar{ij}}),
	\qquad
	\hat{\So^*_{k\ell}f}(p) = \tfrac{1}{2\pi} \hat{f}(p_{k}+p_\ell,p_{\bar{k\ell}}).
\end{align}
A priori, we only have $ \Go_z\So^*_{k\ell}f\in\Lsp^2(\R^{2n}) $ from Lemma~\ref{l.So}.
Given \eqref{e.dell}--\eqref{e.S*ijS*kl} together with $ \Re(z)<0 $, we further obtain
\begin{align}
	\label{e.dell.conseq}
	\int_{\R^{2n}} \Big| \hat{g}(p_i+p_j,p_{\bar{ij}}) \frac{1}{\frac12|p|^2-z}\hat{\So_{k\ell}^*f}(p) \Big| \, \d p
	=
	\int_{\R^{2n}} \Big| \hat{g}(q_{2-n}) \frac{1}{\frac12|\M_{ij}q|^2-z}\hat{\So_{k\ell}^*f}(M_{ij}q) \Big| \, \d q
	\leq
	C\, 
	\norm{g}
	\,
	\norm{f},
\end{align}
where, in deriving the equality, we apply a change of variable $ q=M_{ij}^{-1}p $,
together with $ (p_i+p_j,p_{\bar{ij}})=(\M_{ij}^{-1}p)_{2-n} $
and $ |\det(M_{ij})|=1 $ (as readily verified from~\eqref{e.M}).
Referring to the definition~\eqref{e.domSo} of $ \Dom(\So_{ij}) $,
since~\eqref{e.dell.conseq} holds for all $ g\in\Lsp^2(\R^{2n-2}) $,
we conclude $ \Go_z\So^*_{k\ell}f \in \Dom(\So_{ij}) $
and further  that 
$ |\ip{ g }{ \So_{ij}\Go_z\So^*_{k\ell}f }| $
$=$
$ |\ip{ \So^*_{ij}g }{ \Go_z\So^*_{k\ell}f }|$
$ \leq $
$C \norm{g}\,\norm{f} $.
The desired identity \eqref{e.GbiQ.Four} now follows from~\eqref{e.S*ijS*kl}.
\end{proof}

We next derive the $ \e>0 $ analog of~\eqref{e.GbiQ.Four}.
Recall that $ \pFou{v}(y_1,q_{2-n}) $ denotes partial Fourier transform in the last $ n-1 $ components.

\begin{lemma}\label{l.Gobi.bilinear}
For (not necessarily distinct) $ (i<j), (k<\ell) $, $ \Re(z)<0 $, and $ v,w\in\Ssp(\R^{2n}) $,
\begin{subequations}
\begin{align}
	\label{e.GbieQ.Four}
	&\Ip{w}{ \So_{\e ij}\Go_z\So^*_{\e k\ell} v}
	=
	\int_{\R^{2n}} \bar{\hat{w}(\tfrac{\e}{2}(p_i-p_j),p_i+p_j,p_{\bar{ij}})}
	\frac{ 1 }{\frac12|p|^2-z}
	\hat{v}(\tfrac{\e}{2}(p_k-p_\ell),p_{k}+p_\ell,p_{\bar{k\ell}}) 
	\ \d p
\\
	\label{e.GbieQ.pFour}
	&\quad=
	\int_{\R^{2+2+2n}} \bar{\pFou{w}(y'_1,p_i+p_j,p_{\bar{ij}})}
	\frac{ e^{\frac12\img\e ((p_i-p_j)\cdot y'_1-(p_k-p_\ell)\cdot y_1)} }{\frac12|p|^2-z}
	\pFou{v}(y_1,p_{k}+p_\ell,p_{\bar{k\ell}}) 
	\ \frac{\d y_1 dy'_1 \d p}{(2\pi)^2}.
\end{align}
\end{subequations}
\end{lemma}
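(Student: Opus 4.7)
\medskip
\noindent\textbf{Proof plan for Lemma~\ref{l.Gobi.bilinear}.}
The statement is a computation in Fourier variables, so the plan is to move all three operators $\So_{\e ij}$, $\Go_z$, $\So^*_{\e k\ell}$ into Fourier space using equation~\eqref{e.fourier1} and Parseval, then convert to partial Fourier variables in a second step. The only nontrivial ingredient is tracking the affine changes of variables induced by $M_{\e ij}$ and $M_{\e k\ell}$.

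First I would use \eqref{e.fourier1} to get the Fourier representation of $\So^*_{\e k\ell}$. Since the adjoint relation gives $\ip{\So^*_{\e k\ell} v}{u}=\ip{v}{\So_{\e k\ell} u}$, applying Parseval on the right side and substituting $\hat{\So_{\e k\ell}u}(q)=\e^{-2}\hat{u}(M_{\e k\ell}q)$ converts the right side to an integral in $q$. Changing variables $p=M_{\e k\ell}q$, with Jacobian $|\det M_{\e k\ell}|=\e^{-2}$ (which cancels the $\e^{-2}$), and inverting the formula \eqref{e.M} (adapted to the $\e$-scaling) to read off $q_1=\tfrac{\e}{2}(p_k-p_\ell)$, $q_2=p_k+p_\ell$, $q_m=p_m$ for $m\neq k,\ell$, yields
\begin{align*}
	\hat{\So^*_{\e k\ell} v}(p) \;=\; \hat{v}\big(\tfrac{\e}{2}(p_k-p_\ell),\,p_k+p_\ell,\,p_{\bar{k\ell}}\big).
\end{align*}
An identical manipulation applies to $\So^*_{\e ij} w$. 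Since $\Go_z$ acts in Fourier as multiplication by $(\tfrac12|p|^2-z)^{-1}$, writing $\ip{w}{\So_{\e ij}\Go_z\So^*_{\e k\ell} v}=\ip{\So^*_{\e ij}w}{\Go_z \So^*_{\e k\ell}v}$ and applying Parseval produces exactly the right-hand side of~\eqref{e.GbieQ.Four}.

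For \eqref{e.GbieQ.pFour}, I would simply re-express each of the two Fourier-transformed functions in terms of the partial Fourier transform~\eqref{pft} by Fourier-inverting only in the first component:
\begin{align*}
	\hat{w}\big(\tfrac{\e}{2}(p_i-p_j),\,p_i+p_j,\,p_{\bar{ij}}\big)
	\;=\;
	\int_{\R^2} e^{-\frac{\img\e}{2}y'_1\cdot(p_i-p_j)}\,\pFou{w}(y'_1,p_i+p_j,p_{\bar{ij}})\,\frac{\d y'_1}{2\pi},
\end{align*}
and analogously for $\hat v$. Substituting these into~\eqref{e.GbieQ.Four} and taking the complex conjugate of the $w$-factor produces the exponential $e^{\frac{\img\e}{2}(y'_1\cdot(p_i-p_j)-y_1\cdot(p_k-p_\ell))}$ and the claimed identity~\eqref{e.GbieQ.pFour}.

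The Schwartz assumption $v,w\in\Ssp(\R^{2n})$ ensures that every integral in sight is absolutely convergent (via rapid decay of $\hat v,\hat w$ and the resolvent bound $|(\tfrac12|p|^2-z)^{-1}|\le(-\Re z)^{-1}$), so Fubini is automatic and the changes of variables are legitimate. The only substantive point to be careful about is the correct identification of the inverse transformation $M_{\e k\ell}^{-1}$, i.e.\ tracking the factor $\tfrac{\e}{2}(p_k-p_\ell)$ in the first slot; everything else is bookkeeping of $2\pi$'s and $\e$-powers.
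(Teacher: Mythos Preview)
Your proposal is correct and takes essentially the same approach as the paper. The only cosmetic difference is that you derive $\hat{\So^*_{\e k\ell}v}(p)=\hat v(\tfrac{\e}{2}(p_k-p_\ell),p_k+p_\ell,p_{\bar{k\ell}})$ via duality from the already-established formula~\eqref{e.fourier1} for $\So_{\e k\ell}$, whereas the paper computes it directly from the definition $\So^*_{\e k\ell}v(x)=\e^{-2}v(T_{\e k\ell}x)$ and the change of variables $y=T_{\e k\ell}x$; the remaining steps (Parseval, Fourier-inverting the first slot to pass to partial Fourier) are identical.
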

\begin{proof}
Fixing $ v,w\in\Ssp(\R^{2n}) $,
we write $ \ip{w}{ \So_{\e ij}\Go_z\So^*_{\e k\ell}v} = \ip{\So^*_{\e ij}w}{ \Go_z\So^*_{\e k\ell}v} $.
Our goal is to express the last quantity in Fourier variables,
which amounts to expressing $ \So^*_{\e k\ell}v $ and $ \So^*_{\e ij}w $ in Fourier variables.
Recall (from~\eqref{e.Soe}) that $ \So^*_{\e ij} $ acts on $ \Lsp(\R^{2n}) $ by 
$ v(\Cdot)\mapsto \e^{-2}v(T_{\e ij}\Cdot) $, where $ T_{\e ij} $ is the invertible linear transformation defined in~\eqref{e.T}.
Write
\begin{align*}
	\hat{\So^*_{\e ij}w} (p) 
	= 
	\int_{\R^{2n}} e^{-\img p\cdot x} \e^{-2} w(T_{\e ij}x) \frac{\d x}{(2\pi)^n}.
\end{align*}
We wish to perform a change of variable $ T_{\e ij}x=y $.
Doing so requires understanding how $ (p\cdot x) $ transform accordingly.
Defining
$
	\tilde{\M}_{\e ij}p := (\tfrac{\e}{2}(p_i-p_j),p_i+p_j,p_{\bar{ij}}),
$
it is readily checked that $ p \cdot x = \tilde{\M}_{\e ij} p \cdot(T_{\e ij}x) $.
Given this, we perform the change of variable $ T_{\e ij}x=y $. 
With $ |\det(T_{\e ij})|=\e^{-2} $, we now have
\begin{align*}
	\hat{\So^*_{\e ij}w} (p) 
	= 
	\int_{\R^{2n}} e^{-\img (\tilde{\M}_{\e ij} p)\cdot y}  w(y) \frac{\d y}{(2\pi)^n}
	=
	\hat{w}(\tilde{\M}_{\e ij}p)
	=
	\hat{w}(\tfrac{\e}{2}(p_i-p_j),p_i+p_j,p_{\bar{ij}}),
\end{align*}
and similarly $ \hat{\So^*_{\e k\ell }v} (p) = \hat{v}(\frac{\e}{2}(p_k-p_\ell),p_k+p_\ell,p_{\bar{k\ell}}) $.
From these expressions of $ \So^*_{\e k\ell}v $ and $ \So^*_{\e ij}w $ 
we conclude~\eqref{e.GbieQ.Four}.
The identity~\eqref{e.GbieQ.pFour} follows from~\eqref{e.GbieQ.Four}
by writing $ \pFou{v}(y_1,p_{2-n}) = \int_{\R^2} e^{\img y_1\cdot p_1} \hat{v}(p) \frac{\d p_1}{2\pi} $
(and similarly for $ \pFou{w} $).
\end{proof}

A useful consequence of Lemma~\ref{l.Gobi.bilinear} is the following norm bound.
\begin{lemma}\label{l.off.diagonal.bdd}
For  distinct $ (i<j) \neq (k<\ell) $, $ \Re(z)<0 $, and $ \e\in(0,1) $,
$
	\normo{\phi\So_{\e ij}\Go_z\So^*_{\e k\ell}\phi} \leq C.
$
\end{lemma}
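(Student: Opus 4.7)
The strategy is to reduce the $\e>0$ bilinear form to the same Dell'Antonio--Figari--Teta inequality~\eqref{e.dell} that powered the proof of Lemma~\ref{l.off.diagonal}, exploiting the fact that the only $\e$-dependence appears through a unimodular phase.

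First, I would fix $v,w\in\Ssp(\R^{2n})$ and apply Lemma~\ref{l.Gobi.bilinear} (specifically~\eqref{e.GbieQ.pFour}) to the pair $(\phi v, \phi w)$.  Using $\pFou{\phi v}(y_1,q_{2-n})=\phi(y_1)\pFou{v}(y_1,q_{2-n})$ and likewise for $w$, this yields
\begin{align*}
	\Ip{w}{\phi\So_{\e ij}\Go_z\So^*_{\e k\ell}\phi v}
	=
	\int \overline{\phi(y'_1)\pFou{w}(y'_1,p_i+p_j,p_{\bar{ij}})}\,
	\frac{e^{\frac{\img\e}{2}((p_i-p_j)\cdot y'_1 - (p_k-p_\ell)\cdot y_1)}}{\tfrac12|p|^2-z}\,
	\phi(y_1)\pFou{v}(y_1,p_k+p_\ell,p_{\bar{k\ell}})\,\frac{\d y_1\,\d y'_1\,\d p}{(2\pi)^2}.
\end{align*}
Since the phase has modulus one, taking absolute values removes \emph{all} dependence on $\e$.

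Second, for each fixed pair $(y_1,y'_1)$, I would apply the off-diagonal Dell'Antonio--Figari--Teta estimate~\eqref{e.dell} with
$g(q_{2-n}):=\pFou{w}(y'_1,q_{2-n})$ and $f(q_{2-n}):=\pFou{v}(y_1,q_{2-n})$ (both in $\Lsp^2(\R^{2n-2})$), noting that $\Re(z)<0$ forces $|\tfrac12|p|^2-z|\ge \tfrac12(|p|^2+|\Re z|)$, so the denominator fits the $|p|^2+\alpha$ form required.  This bounds the inner $\d p$ integral by $C\,\|\pFou{w}(y'_1,\Cdot)\|_{\Lsp^2(\R^{2n-2})}\,\|\pFou{v}(y_1,\Cdot)\|_{\Lsp^2(\R^{2n-2})}$.

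Third, I would integrate in $(y_1,y'_1)$ against $|\phi(y_1)|\,|\phi(y'_1)|$ and apply Cauchy--Schwarz to each factor separately, which gives
\begin{align*}
	\int_{\R^2} |\phi(y_1)|\,\|\pFou{v}(y_1,\Cdot)\|_{\Lsp^2(\R^{2n-2})}\,\d y_1
	\le
	\|\phi\|_{\Lsp^2(\R^2)}\,\|v\|_{\Lsp^2(\R^{2n})}
	=
	\|v\|_{\Lsp^2(\R^{2n})},
\end{align*}
by Plancherel and the normalization $\|\phi\|_{\Lsp^2}^2=\int \Phi=(\int\varphi)^2=1$.  The analogous estimate holds for $w$, yielding $|\ip{w}{\phi\So_{\e ij}\Go_z\So^*_{\e k\ell}\phi v}|\le C\,\|v\|\,\|w\|$, and the claimed operator norm bound follows by density of $\Ssp$ in $\Lsp^2$.

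The main step is the second one, the invocation of the off-diagonal inequality~\eqref{e.dell}, which is the content of \cite{dell94} and is precisely the non-trivial input that distinguishes the off-diagonal case $(i<j)\ne(k<\ell)$ from the diagonal case.  Everything else — the unimodular phase cancellation and the Cauchy--Schwarz reduction in the $\phi$-variables — is routine and, crucially, produces constants that depend only on $\phi$ and not on $\e$.
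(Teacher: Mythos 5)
Your proof is correct and follows essentially the same route as the paper: invoke the bilinear representation~\eqref{e.GbieQ.pFour}, discard the unimodular phase, apply the Dell'Antonio--Figari--Teta inequality~\eqref{e.dell} with $f$ and $g$ given by the partial Fourier transforms of $v$ and $w$ at fixed $(y_1,y'_1)$, and finish with Cauchy--Schwarz in the $\phi$-variables. The only cosmetic difference is that you carry the $\phi$-factors outside $f,g$ and bound them afterward, while the paper absorbs them into $f,g$ directly; the argument is the same.
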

\begin{proof}
In~\eqref{e.GbieQ.pFour}, 
apply~\eqref{e.dell} with $ f(\Cdot) =\phi(y_1) \pFou{v}(y_1,\Cdot) $ and $ g(\Cdot) = \phi(y'_1)\pFou{w}(y'_1,\Cdot) $.
and integrate the result over $ y_1 ,y'_1 $.
We have
\begin{align*}
	|\ip{\phi w}{\So_{\e ij}\Go_z\So^*_{\e k\ell} ( \phi v)}|	
	\leq
	C\,
	\int_{\R^{2}}
	\norm{v(y_1,\Cdot)}\phi(y_1) \d y_1
	\,
	\int_{\R^{2}}
	\norm{w(y'_1,\Cdot)} \phi(y'_1) \d y'_1.
\end{align*}
The last expression, upon an application of the Cauchy--Swchwarz inequality in $ y_1 $ and in $ y'_1 $,
is bounded by $ C\norm{v} \, \norm{w} $.
From this we conclude $ \normo{\phi\So_{\e ij}\Go_z\So^*_{\e k\ell}\phi} \leq C $.
\end{proof}

We are now ready to establish the convergence of the operator $ \phi \, \So_{\e ij}\Go_z\So^*_{\e k\ell} \,\phi $ for distinct pairs.
Recall $ \Po $ from~\eqref{e.Po}.
\begin{lemma}\label{l.Gobi.cnvg}
For each $ (i<j)\neq(k<\ell) $, and $ \Re(z)<0 $,
we have $ \phi \, \So_{\e ij}\Go_z\So^*_{\e k\ell} \,\phi \to \phi\otimes (\So_{ij}\Go_z\So^*_{k\ell} \Po) $ strongly as $ \e\to 0 $.
\end{lemma}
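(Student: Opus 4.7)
The plan is to combine a uniform-bound / density reduction with an explicit dominated convergence computation in the partial Fourier representation.

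\textbf{Step 1 (reduction to Schwartz data).} By Lemma~\ref{l.off.diagonal.bdd}, $\sup_{\e\in(0,1)}\normo{\phi\So_{\e ij}\Go_z\So^*_{\e k\ell}\phi}\le C$, and by Lemma~\ref{l.off.diagonal} together with the boundedness of $\Po$ and of $\phi\otimes\Cdot$, the candidate limit $\phi\otimes(\So_{ij}\Go_z\So^*_{k\ell}\Po)$ is bounded. Hence by density of $\Ssp(\R^{2n})\subset\Lsp^2(\R^{2n})$ and a standard $3\e$-argument, it suffices to prove $\|\Xi_\e\|_{\Lsp^2(\R^{2n})}\to 0$ for each fixed $v\in\Ssp(\R^{2n})$, where
\[
\Xi_\e:=\phi\,\So_{\e ij}\Go_z\So^*_{\e k\ell}(\phi v) - \phi\otimes(\So_{ij}\Go_z\So^*_{k\ell}\Po v).
\]

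\textbf{Step 2 (partial-Fourier representation).} For $v\in\Ssp(\R^{2n})$, I would compute $\pFou{\Xi_\e}(y_1,q_{2-n})$ by composing Lemma~\ref{l.Soe.Four} with $\widehat{\Go_z f}(p)=\hat f(p)/(\tfrac12|p|^2-z)$ and with the formula $\widehat{\So^*_{\e k\ell}(\phi v)}(p)=\widehat{\phi v}(\tfrac\e2(p_k-p_\ell),p_k+p_\ell,p_{\bar{k\ell}})$ that appears in the proof of Lemma~\ref{l.Gobi.bilinear}. Writing $Q(q):=((\M_{ij}q)_k+(\M_{ij}q)_\ell,(\M_{ij}q)_{\bar{k\ell}})$ and $P_\e(q):=\tfrac\e2((\M_{ij}q)_k-(\M_{ij}q)_\ell)$, and using the identity $\widehat{\phi v}(0,p_{2-n})=(2\pi)^{-1}\widehat{\Po v}(p_{2-n})$ (by unpacking definitions), one obtains
\[
\pFou{\Xi_\e}(y_1,q_{2-n})
=\phi(y_1)\int_{\R^2}\frac{e^{\img\e q_1\cdot y_1}\,\widehat{\phi v}\!\left(P_\e(q),Q(q)\right) - \widehat{\phi v}\!\left(0,Q(q)\right)}{\tfrac12|\M_{ij}q|^2-z}\,\frac{\d q_1}{2\pi}.
\]
The integrand vanishes pointwise as $\e\to0$ since $e^{\img\e q_1\cdot y_1}\to 1$ and $\widehat{\phi v}$ is continuous.

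\textbf{Step 3 (dominated convergence).} I would then express $\|\Xi_\e\|^2$ via Plancherel in the $y_{2-n}$ variable as $\int\phi(y_1)^2\int|\pFou{\Xi_\e}(y_1,q_{2-n})/\phi(y_1)|^2\,\d q_{2-n}\,\d y_1$ and apply dominated convergence. Compact support of $\phi$ confines $y_1$ to a bounded set, so the exponential factor is uniformly bounded; Schwartz regularity of $\widehat{\phi v}$ yields a fixed Schwartz envelope $\Psi\colon\R^{2n-2}\to\R_+$ satisfying $|\widehat{\phi v}(P_\e(q),Q(q))|+|\widehat{\phi v}(0,Q(q))|\le 2\Psi(Q(q))$ uniformly in $\e$ and $q_1$. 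The resulting quadratic form in $q$ is finite, uniformly in $\e$, by the Dell'Antonio-type bound \eqref{e.dell} (applied to $\Psi$ or a suitable $\Lsp^2$ majorant of it).

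\textbf{Main obstacle.} The crux is producing the $\e$-uniform $\Lsp^1$ majorant. Because the kernel $(\tfrac12|p|^2-z)^{-1}$ decays only like $|p|^{-2}$ globally and not coordinatewise, naive Fubini-based estimates fail; one genuinely needs \eqref{e.dell}, which is valid precisely under the off-diagonal hypothesis $(i<j)\neq(k<\ell)$. This is the same ingredient that underlies Lemmas~\ref{l.off.diagonal}--\ref{l.off.diagonal.bdd}, and it is exactly what the diagonal case lacks, necessitating the renormalization developed in the following section.
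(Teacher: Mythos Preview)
Your proposal is correct and follows essentially the same route as the paper: reduce to Schwartz data via the uniform bound, pass to partial Fourier variables, and run dominated convergence with the integrable majorant supplied by the Dell'Antonio inequality~\eqref{e.dell} (equivalently, by the boundedness of the limiting operator in Lemma~\ref{l.off.diagonal}). The only difference is packaging: the paper builds the dominant as the squared norm of $(\phi\otimes\So_{ij}\Go_{-\Re(z)}\So^*_{k\ell}\Po)v'$ for an auxiliary $v'$ with $\pFou{v'}=|\pFou{v}|$, whereas you use a rapidly decaying envelope of $\widehat{\phi v}$ in its first argument; both reduce to the same estimate.
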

\begin{proof}
Our goal is to show $ \phi\,\So_{\e ij}\Go_z\So^*_{\e k\ell}\phi v \to \phi\otimes \So_{ij}\Go_z\So^*_{k\ell}\Po v  $,
for each $ v\in\Lsp^2(\R^{2n}) $.
As shown in Lemmas~\ref{l.off.diagonal} and \ref{l.off.diagonal.bdd},
the operators $ (\So_{\e ij}\Go_z\So^*_{\e k\ell}) $ and $ (\So_{ij}\Go_z\So^*_{k\ell}) $ are norm-bounded, uniformly in $ \e $.
Hence it suffices to consider $ v\in \Ssp(\R^{2n}) $, the Schwartz space.
To simplify notation, set
$
	u_{\e}:= (\phi\,\So_{\e ij}\Go_z\So^*_{\e k\ell}\phi) v
$
and
$
	u:=(\phi\otimes \So_{ij}\Go_z\So^*_{k\ell}\Po) v.
$
The strategy of the proof is to express $ \norm{u_\e-u}^2 $ as an integral, and use the dominated convergence theorem.

The first step is to obtain expressions for the partial Fourier transforms of $ u_\e=(\phi\,\So_{\e ij}\Go_z\So^*_{\e k\ell}\phi) v $ 
and $ u=(\phi\otimes \So_{ij}\Go_z\So^*_{k\ell}\Po) v $.
To this end, fix $ v,w\in \Ssp(\R^{2n}) $,
in~\eqref{e.GbiQ.Four}, set $ (f(\Cdot),g(\Cdot)) = (\phi(y_1) v(y_1,\Cdot),\phi(y'_1)w(y'_1,\Cdot)) $,
and integrate over $ y_1,y'_1 $.
Note that $ \hat{f}(p_{2-n}) = \phi(y_1) \pFou{v}(y_1,p_{2-n}) $ (and similar for $ g $).
We have
\begin{align}
	\tag{\ref*{e.GbiQ.Four}'}
	\label{e.GbiQ.Four.}
	\Ip{ w }{ u } 
	&=
	\int_{\R^{2+2+2n}} \bar{\pFou{w}(y'_1,p_i+p_j,p_{\bar{ij}})} \phi(y'_1)
	\frac{ 1 }{\frac12|p|^2-z}
	\phi(y_1)
	\pFou{v}(y_1,p_k+p_\ell,p_{\bar{k \ell}}) 
	\ \frac{\d y_1 \d y'_1\d p}{(2\pi)^2}.
\end{align}
Similarly, in~\eqref{e.GbieQ.pFour}, substitute $ (v,w)=(\phi v, \phi w) $ to get
\begin{align}
\tag{\ref*{e.GbieQ.pFour}'}
\label{e.GbieQ.pFour.}
	\Ip{ w }{ u_\e } 
	=
	\int_{\R^{2+2+2n}}& \bar{\pFou{w}(y'_1,p_i+p_j,p_{\bar{ij}})} 
\phi(y'_1)
	\frac{ e^{\frac12\img\e ((p_i-p_j)\cdot y'_1-(p_k-p_\ell)\cdot y_1)} }{\frac12|p|^2-z}
	\phi(y_1)
	\pFou{v}(y_1,p_k+p_\ell,p_{\bar{k \ell}}) 
	\ \frac{\d y_1 \d y'_1\d p}{(2\pi)^2}.
\end{align}
%
Equations~\eqref{e.GbiQ.Four.} and \eqref{e.GbieQ.pFour.} express
the inner product (against a generic $ w $) of $ u_\e $ and $ u$ in partial Fourier variables.
From these expressions we can read off $ \pFou{u_\e}(y'_1,q_{2-n}) $ and $ \pFou{u}(y'_1,q_{2-n}) $.
Specifically, we perform a change of variable $ q=\M^{-1}_{ij}p=(\tfrac{1}{2}(p_i-p_j),p_i+p_j,p_{\bar{ij}}) $ in \eqref{e.GbiQ.Four.} and \eqref{e.GbieQ.pFour.},
so that $ \pFou{w} $  takes variables $ (y'_1,q_{2-n}) $ instead of $ (y'_1,p_i+p_j,p_{\bar{ij}}) $.
From the result we read off
\begin{align}
	\label{e.long.}
	\pFou{u} \big( y'_1,q_{2-n} \big)
	=
	\int_{\R^{4}}  f_{z,v} \, \d y_1 \d q_1,
\quad
	\pFou{u_\e} \big( y'_1,q_{2-n} \big)
	=
	\int_{\R^{4}}E_\e\, f_{z,v} \, \d y_1 \d q_1.
\end{align}
Here $ E_\e $ and $ f_{z,v} $ are (rather complicated-looking) functions of $ q,y_1,y'_1 $,
given in the following.
The precisely functional forms of $ f_{z,v} $ and $ E_\e $ will be irrelevant.
Instead, we will explicitly signify 
what properties of these functions we are using whenever doing so.
We have $ E_\e := e^{\img\e q_1\cdot y'_1-\img\e[\M^{-1}_{k\ell}\M_{ij}q]_1\cdot y_1} $ and
\begin{align*}
	f_{z,v}
	:=
	\phi(y'_1)\frac{ 1 }{\frac12|\M_{ij}q|^2-z} \phi(y_1)
	\pFou{v}(y_1,[\M_{k\ell}^{-1}M_{ij}q]_{2-n})
	\ \frac{1}{(2\pi)^2}.
\end{align*}

Additionally, we will need an auxiliary function $ v'\in\Lsp^2(\R^{2n}) $ such that $ \pFou{v'}(y_1,\tilp) = |\pFou{v}(y_1,\tilp)| $.
Such a function $ v'=v'(y) $ is obtained by taking inverse Fourier of $ |\pFou{v}(y_1,q_{2-n})| $ in $ q_{2-n} $.
Note that $ \norm{v'}=\norm{v}<\infty $.
Set $ a:=-\Re(z)>0 $ and $ u' := (\phi\otimes \So_{ij}\Go_{-a}\So_{k\ell}^*\Po) v' $. 
We have
\begin{align}
	\label{e.short}
	\pFou{u'} \big( y'_1,q_{2-n} \big)
	=
	\int_{\R^{4}} f_{-a,v'} \, \d y_1 \d q_1,
	\qquad
	f_{-a,v'} \geq |f_{z,v}| \geq 0.
\end{align}

Now, use~\eqref{e.long.} and~\eqref{e.short} to write
\begin{align}
	\label{e.cmp1}
	\norm{ u_\e - u }^2
	&\leq
	\int_{\R^{2n}} \Big( \int_{\R^{4}} |f_{z,v}| \, |E_\e-1| \d y_1 \d q_1 \Big)^2 \d y'_1 \d q_{2-n},
\\
	\label{e.cmp2}
	\norm{ u' }^2
	&=
	\int_{\R^{2n}} \Big( \int_{\R^{4}} f_{-a,v'} \d y_1 \d q_1 \Big)^2 \d y'_1 \d q_{2-n}.
\end{align}
View~\eqref{e.cmp1}--\eqref{e.cmp2} as integrals over $ \R^{8+2n} $, i.e.,
\begin{align*}
	\text{r.h.s.\ of \eqref{e.cmp1}}
	:=
	\int_{\R^{8+2n}} g_\e \, \d(\ldots),
\qquad
	\text{r.h.s.\ of \eqref{e.cmp2}}
	:=
	\int_{\R^{8+2n}} g \, \d(\ldots).
\end{align*}
We now wish to apply the dominated convergence theorem on $ g_\e $ and $ g $.
To check the relevant conditions, note that:
since $ |E_\e-1| \leq 1 $ and $ |f_{z,v}|\leq f_{-a,v'} $, we have $ 0 \leq g_\e \leq g $;
since $ |E_\e-1| \to 0 $ pointwisely on $ \R^{8+2n} $, we have $ g_\e \to 0 $ pointwise on $ \R^{8+2n} $;
the integral of $ g $ over $ \R^{8+2n} $ evaluates to $ \norm{u'}^2 = \norm{(\phi\otimes \So_{ij}\Go_z\So^*_{k\ell} \Po) v'}^2 $, 
which is \emph{finite} since the operators $ \So_{ij}\Go_z\So^*_{k\ell} $, $ (\phi\otimes\,\Cdot\,) $, and $ \Omega_\phi$ are bounded.
The desired result $ \int_{\R^{8+2n}} g_\e \, \d(\ldots) = \norm{ w_\e - w }^2 \to 0 $ follows.
\end{proof}

\section{Diagonal mediating operators}
\label{sect.diagonal}
The main task here is to analyze the asymptotic behavior of the diagonal part $ \phi \, \So_{\e 12}\Go_z\So^*_{\e 12} \phi $,
which diverges logarithmically.
We begin by deriving an expression for $ \ip{w}{ \phi \, \So_{\e 12}\Go_z\So^*_{\e 12} \phi \,v} $ that exposes such $ \e \to 0 $ behavior.
Let $ \Gtwo_z(x):=( -\frac12  \nabla^2 - z\Io )^{-1}(0,x) $, $ x\in\R^2 $, denote Green's function in two dimensions.
Recall that $ |p|^2_{2-n} := \frac{1}{2}|p_2|^2 + |p_3|^2 +\ldots+|p_n|^2 $.

\begin{lemma}\label{l.G1212}
For $ v,w\in\Lsp^2(\R^{2n}) $, we have
\begin{align}
\label{e.G1212}
	&\ip{w}{ \phi \, \So_{\e 12}\Go_z\So^*_{\e 12} \phi v}
=
	\int_{\R^{2n}} \bar{\pFou{w}(y'_1,p_{2-n})} 
	\, 
	\phi(y'_1) \,\tfrac12 \Gtwo_{\e^2(\frac12z-\frac14|p|^2_{2-n})}\big(y'_1-y_1\big) 
	\, \phi(y_1) \, \pFou{v}(y_1,p_{2-n})
	\ 
	\d y_1 \d y'_1 \d p_{2-n}.
\end{align}
\end{lemma}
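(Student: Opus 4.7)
\medskip

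The plan is to start from Lemma~\ref{l.Gobi.bilinear} specialized to $(i<j)=(k<\ell)=(1<2)$, perform a linear change of Fourier variables that separates the $p_1$-integration, and then recognize the resulting one-variable Fourier integral as the two-dimensional Green's function $\Gtwo$. I will do the calculation first for $v,w\in\Ssp(\R^{2n})$ where all Fourier manipulations are unambiguous, and then extend to $\Lsp^2(\R^{2n})$ by density, using the fact that for fixed $\e>0$ the operator $\phi\So_{\e 12}\Go_z\So^*_{\e 12}\phi$ is bounded (as the composition of bounded operators).

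First, apply~\eqref{e.GbieQ.pFour} with $(i<j)=(k<\ell)=(1<2)$ and with the replacement $(v,w)\mapsto(\phi v,\phi w)$. The oscillatory factor collapses to
\begin{align*}
	e^{\frac12\img\e ((p_1-p_2)\cdot y'_1-(p_1-p_2)\cdot y_1)}
	=
	e^{\frac12\img\e (p_1-p_2)\cdot (y'_1-y_1)}.
\end{align*}
Next, perform the change of variables $q=\M_{12}^{-1}p$ from~\eqref{e.M}, so that $q_1=\tfrac12(p_1-p_2)$, $q_2=p_1+p_2$, $q_{\bar{12}}=p_{\bar{12}}$; the Jacobian equals one. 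A short computation gives
\begin{align*}
	\tfrac12|p|^2
	=
	|q_1|^2 + \tfrac14|q_2|^2 + \tfrac12|p_{\bar{12}}|^2
	=
	|q_1|^2 + \tfrac12|q|^2_{2-n},
\end{align*}
and the exponential becomes $e^{\img\e q_1\cdot(y'_1-y_1)}$. Relabeling $(q_1,q_{2-n})$ as $(p_1,p_{2-n})$ in the image, the integrand against $\bar{\pFou{w}}\phi(y'_1)\phi(y_1)\pFou{v}$ is
\begin{align*}
	\frac{e^{\img\e p_1\cdot(y'_1-y_1)}}{|p_1|^2+\tfrac12|p|^2_{2-n}-z}\,\frac{1}{(2\pi)^2},
\end{align*}
with $p_{2-n}$ appearing only through $|p|^2_{2-n}$.

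Now I carry out the $p_1$ integration. Scaling $p_1\mapsto p_1/\e$ (so $\d p_1\mapsto \e^{-2}\d p_1$, and the denominator becomes $\e^{-2}|p_1|^2+\tfrac12|p|^2_{2-n}-z$, which after pulling out $\e^{-2}$ gives $|p_1|^2+\e^2(\tfrac12|p|^2_{2-n}-z)$) yields
\begin{align*}
	\int_{\R^2}\frac{e^{\img p_1\cdot(y'_1-y_1)}}{|p_1|^2+\e^2\bigl(\tfrac12|p|^2_{2-n}-z\bigr)}\,\frac{\d p_1}{(2\pi)^2}
	=
	\tfrac12\,\Gtwo_{\e^2(\frac12 z-\frac14|p|^2_{2-n})}(y'_1-y_1),
\end{align*}
where the last equality uses the standard Fourier representation $\Gtwo_w(x)=\int_{\R^2} e^{\img p\cdot x}(\tfrac12|p|^2-w)^{-1}\frac{\d p}{(2\pi)^2}$ together with the rescaling of $|p_1|^2+\alpha = 2(\tfrac12|p_1|^2+\tfrac{\alpha}{2})$ for $\alpha=\e^2(\tfrac12|p|^2_{2-n}-z)$. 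Substituting this back produces the claimed formula~\eqref{e.G1212} for $v,w\in\Ssp(\R^{2n})$, after which the density of $\Ssp$ in $\Lsp^2$ and boundedness of the operator (and of the right-hand-side bilinear form, since $\Gtwo_w$ is an $\Lsp^1_{\mathrm{loc}}$ convolution kernel and $\phi$ is Schwartz) extend the identity to all of $\Lsp^2(\R^{2n})$.

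The proof is essentially bookkeeping once the Green's function interpretation is recognized; the only mildly delicate point is tracking the factor of $\e$ through the change of variables $q=\M_{12}^{-1}p$ and the subsequent rescaling $p_1\mapsto p_1/\e$, which is what moves the $\e^{-2}$ from the definition of $\So^*_{\e 12}$ into the argument of $\Gtwo$. This dependence on $\e^2$ inside $\Gtwo$ is the source of the logarithmic divergence in $\e$ (since $\Gtwo_w(0)$ diverges logarithmically as $w\to 0^-$, by the explicit formula $\Gtwo_w(x)=\frac{1}{\pi}K_0(\sqrt{-2w}|x|)$ invoked in Remark~\ref{rmk.BCmatching}), which is what will be exploited in the renormalization step of Section~\ref{sect.diagonal}.
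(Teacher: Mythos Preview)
Your proof is correct and follows essentially the same route as the paper: apply Lemma~\ref{l.Gobi.bilinear} with $(i<j)=(k<\ell)=(1<2)$ and $(v,w)\mapsto(\phi v,\phi w)$, perform the change of variables $q=\M_{12}^{-1}p$, and recognize the remaining $p_1$-integral as the two-dimensional Green's function. Your treatment is slightly more detailed than the paper's (you spell out the $p_1\mapsto p_1/\e$ rescaling and the density extension from $\Ssp$ to $\Lsp^2$, both of which the paper leaves implicit), but the approach is the same.
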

\begin{proof}
Apply Lemma~\ref{l.Gobi.bilinear} for $ (i<j)=(k<\ell)=(1<2) $
and for $ (v,w)\mapsto (\phi v, \phi w) $,
and perform a change of variable $ (\frac{p_1-p_2}{2},p_1+p_2) \mapsto (p_1,p_2) $ in the result.
We obtain
\begin{align}
	\label{e.G1212.}
	\Ip{w}{\phi\So_{\e 12}\Go_z\So^*_{\e 12}\phi v}
	=
	\int_{\R^{2+2+2n}} \bar{\pFou{w}(y'_1,p_{2-n})}
	\phi(y'_1) \frac{ e^{
	\img\e p_1\cdot (y'_1-y_1)} }{|p_1|^2+\frac12|p|^2_{2-n}-z}
	\phi(y_1) \pFou{v}(y_1,p_{2-n})
	\ \frac{\d y_1 dy'_1 \d p}{(2\pi)^2},
\end{align}
and we recognize $\int_{\R^2} \frac{ 
	e^{\img p_1\cdot x_1}}{\frac12 |p_1|^2-z} \frac{\d p_1}{(2\pi)^2}$ as the Fourier transform of the two-dimensional Green's function $ \Gtwo_z $.
\end{proof}

Lemma~\ref{l.G1212} suggests analyzing the behavior of $ \Gtwo_z(x) $ for small $|z|$:
\begin{lemma}\label{l.Gtwo}
Take the branch cut of the complex-variable functions $ \sqrt{z} $ and $ (\log z) $ to be $ (-\infty,0] $,
let $ \cEM $ denote the Euler--Mascheroni constant.
For all $ x \neq 0 $ and $ z \in \C\setminus [0,\infty) $, we have
\begin{align}
	\label{e.Gtwo}
	\Gtwo_{z}(x)
	=\tfrac1{\pi} K_0(\sqrt{-2z}|x|)
	=
	-\tfrac{1}{\pi} \log\tfrac{\sqrt{-z}|x|}{\sqrt{2}}
	-\tfrac{1}{\pi} \cEM
	+
	\smallt(\sqrt{-z}x),
\end{align}
for some $ \smallt(\Cdot) $ that grows linearly near the origin, i.e., $ \sup_{|z|\leq a} (|z|^{-1}\,|\smallt(z)|) \leq C(a) $, for all $ a<\infty $.
\end{lemma}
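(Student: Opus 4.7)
The plan is to compute $\Gtwo_z(x)$ explicitly via its Fourier representation, identify the result with the Macdonald function $K_0$, and then read off the logarithmic singularity from the classical small-argument expansion of $K_0$.

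First, I would work with real $z<0$, where everything is absolutely convergent. Writing the resolvent kernel in momentum space,
\[
\Gtwo_z(x) = \frac{1}{(2\pi)^2}\int_{\R^2}\frac{e^{\img p\cdot x}}{\tfrac12|p|^2-z}\,\d p,
\]
and passing to polar coordinates, the angular integral yields $2\pi J_0(r|x|)$, leaving
\[
\Gtwo_z(x) = \frac{1}{\pi}\int_0^\infty \frac{r\,J_0(r|x|)}{r^2-2z}\,\d r = \tfrac{1}{\pi}\,K_0\!\big(\sqrt{-2z}\,|x|\big),
\]
by the standard identity $\int_0^\infty \frac{t\,J_0(at)}{t^2+b^2}\,\d t = K_0(ab)$ with $a=|x|$ and $b=\sqrt{-2z}$. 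Both sides are analytic in $z\in\C\setminus[0,\infty)$ under the branch conventions fixed in the statement, so the identity extends to the whole domain by analytic continuation in $z$.

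Second, I would invoke the classical Frobenius series
\[
K_0(w) = -\big(\log(w/2)+\cEM\big)\,I_0(w) + \sum_{k=1}^\infty \frac{(w/2)^{2k}}{(k!)^2}\,H_k, \qquad H_k := 1+\tfrac12+\cdots+\tfrac1k,
\]
with $I_0(w)=1+O(w^2)$. Rearranging gives $K_0(w) = -\log(w/2)-\cEM + r(w)$ with $|r(w)| \leq C\,|w|^2\,|\log w|$ for $|w|\leq 1$. Plugging in $w=\sqrt{-2z}\,|x|$ and simplifying $\log\!\big(\tfrac{\sqrt{-2z}\,|x|}{2}\big)=\log\!\big(\tfrac{\sqrt{-z}\,|x|}{\sqrt 2}\big)$ yields the stated decomposition with $\smallt(\sqrt{-z}\,x) := \tfrac{1}{\pi}\,r\!\big(\sqrt{-2z}\,|x|\big)$.

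Finally, the linear-growth property of $\smallt$ is automatic from $|r(w)| \leq C\,|w|^2\,|\log w|$ near $0$, which dominates any linear bound on $|w|\leq a$. There is no genuinely hard step; the only mild care needed is to confirm that the branch of $K_0$ obtained via analytic continuation of the Bessel integral coincides with the standard one --- this is routine, as both functions are holomorphic on $\C\setminus(-\infty,0]$ and agree on $(0,\infty)$.
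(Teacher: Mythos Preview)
Your argument is correct and complete. The only difference from the paper is in how you identify $\Gtwo_z$ with $\tfrac{1}{\pi}K_0$: the paper argues via the ODE characterization --- writing $(-\tfrac12\nabla^2-z)\Gtwo_z=0$ for $x\neq 0$ in radial coordinates, matching to the modified Bessel equation, selecting $K_0$ by decay at infinity, and fixing the constant $c=\tfrac{1}{\pi}$ from the delta-source normalization $-\pi r\,\tfrac{\d}{\d r}\Gtwo_z\to 1$ as $r\to 0$. You instead compute the Fourier integral directly and invoke the Hankel-transform identity $\int_0^\infty \tfrac{t\,J_0(at)}{t^2+b^2}\,\d t=K_0(ab)$. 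Your route is slightly more self-contained in that it avoids appealing to uniqueness of decaying solutions of the radial equation, at the cost of quoting a Bessel integral identity; the paper's route is shorter if one is willing to cite Abramowitz--Stegun for the Bessel-equation solution basis. For the second equality both proofs do the same thing, reading off the small-argument expansion of $K_0$.
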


\noindent
The proof follows from classical special function theory.
We present it here for the convenience of the readers.
\begin{proof}
Write the equation $ (-\frac12\nabla^2 - z) \Gtwo_z(x) =0 $, $ x\neq 0 $, in radial coordinate,
compare the result to the modified Bessel equation~\cite[9.6.1]{abramowitz65},
and note that $ \Gtwo_z(x) $ vanishes at $ |x|\to\infty $.
We see that $ \Gtwo_z(x) = c K_0(\sqrt{-2z}|x|) $, for some constant $ c $,
where $ K_\nu $ denotes the modified Bessel function of second kind.
To fix $ c $,
compare the know expansion of $ K_0(r) $ around $ r=0 $ \cite[9.6.54]{abramowitz65} (noting that $ I_0(0)= 1 $ therein),
and use $ - \pi r \frac{\d~}{\d r}G_{z}(|r|) = 1 $ (because $ (-\frac12\nabla^2G_z(x) -z)=\delta(x) $)  for $ r\to 0 $.
We find $ c= \frac1{\pi} $.
The second equality follows from~\cite[9.6.54]{abramowitz65}.
\end{proof}

For subsequent analysis, it is convenient to decompose $ \Lsp^2(\R^{2n}) $
into a `projection onto $ \phi $' and its orthogonal compliment.
More precisely, recall $ \Po $ from~\eqref{e.Po}, and that $\int \phi^2=1$, we define the projection
\begin{align}
	\label{e.Pro}
	\Pro_\phi := \phi \otimes \Po: \Lsp^2(\R^{2n}) \to \Lsp^2(\R^{2n}),
	\quad
	(\phi\otimes\Po v)(y) := \phi(y_1) \int_{\R^2} \phi(y'_1)v(y'_1,y_{2-n}) \, \d y'_1.
\end{align}

Returning to the discussion about the $ \e\to 0 $ behavior of $ \phi \, \So_{\e 12}\Go_z\So^*_{\e 12} \phi $,
inserting \eqref{e.Gtwo} into \eqref{e.G1212}, we see that $ (\phi\So_{\e 12}\Go_z\So^*_{\e 12}\phi) $ has a divergent part
$
	(\frac{1}{2\pi} |\log\e| ) \Pro_\phi.
$
The coefficient $ (\frac{1}{2\pi}|\log\e|) $ matches the leading order of $ \beta_\e^{-1} $ (see~\eqref{e.betaeps}),
so $ (\frac{1}{2\pi} |\log\e| ) \Pro_\phi $ cancels the divergence $ \beta^{-1}_\e \Io $ on the subspace $ \Img(\Pro_\phi) $,
but still leaves the remaining part $ \beta^{-1}_\e\Io|_{\Img(\Pro_\phi)^\perp} = \beta^{-1}_\e(\Io-\Pro_\phi) $ divergent.
However, recall that $ (\beta^{-1}_\e\Io -\phi \, \So_{\e 12}\Go_z\So^*_{\e 12} \phi) $ 
appears as an \emph{inverse} in the resolvent identity~\eqref{e.resolvent.e}.
Upon taking inverse, the divergent part on $ \Img(\Pro_\phi)^\perp $ becomes a vanishing term.%

We now begin to show the convergence of $ (\beta_\e^{-1}\Io-\phi \, \So_{\e 12}\Go_z\So^*_{\e 12} \phi )^{-1} $.
Doing so requires a technical lemma.
To setup the lemma, consider
a collection of bounded operators $ \{ \auxo_{\e,p} : \Lsp^2(\R^{2})\to \Lsp^2(\R^{2})\}$,
indexed by $ \eps\in(0,1) $ and $ p\in\R^{2n-2} $, such that for each $\eps>0$, 
$	\sup_{p\in\R^{2n-2}} \normo{\auxo_{\e,p}} <\infty. $
Note that here, unlike in the preceding, here $ p=(p_2,\ldots,p_n)\in\R^{2n-2} $ denotes a vector of $ n-1 $ components. 
For each $ \e\in(0,1) $, construct a bounded operator $ \auxo_\e $ as
\begin{align*}
	\auxo_\e: \Lsp^2(\R^{2n}) \to \Lsp^2(\R^{2n}),
	\quad
	\pFou{\auxo_\e u}(\Cdot,p) := \auxo_{\eps,p} \pFou{u}(\Cdot,p).
\end{align*}
Roughly speaking, we are interested in an operator $ \auxo_\e $ that acts on $ y_1\in\R^{2} $ in a way that depends on the partial Fourier components $ p=(p_2,\ldots,p_n)\in\R^{2n-2} $.
The operator $ \auxo_{\e,p} $ records the action of $ \auxo_{\e} $ on $ y_1 $ per \emph{fixed} $ p\in\R^{2n-2} $.
We are interested in obtaining the inverse $ \auxo_\e^{-1} $ and its strong convergence (as $ \e\downarrow 0 $).
The following lemma gives the suitable criteria in terms of each $ \auxo_{\e,p} $.

\begin{lemma}\label{l.auxi}
Let $\{ \auxo_{\e,p} \} $ and $ \auxo_\e $ be as in the preceding.
If each $ \auxo_{\eps,p} $ is invertible with
\begin{align*}
	\sup\big\{ \normo{ \auxo_{\eps,p}^{-1} } : \eps\in(0,1),p\in\R^{2n-2} \big\} := b <\infty,
\end{align*}
and if each $ \auxo_{\eps,p}^{-1} $ permits a norm limit, i.e.,
there exists $ \auxo'_{p}: \Lsp^2(\R^2) \to \Lsp^2(\R^2) $ such that
\begin{align*}
	\auxo_{\eps,p}^{-1} \longrightarrow \auxo'_{p}	
	\text{ in norm as } \eps\to0,
	\quad
	\text{ for each fixed } p\in\R^{2n-2},
\end{align*}
then $ \auxo_{\eps} $ is invertible with $ \sup_{\eps\in(0,1)} \normo{ \auxo_{\eps}^{-1} } \leq b <\infty $,
\begin{align*}
	\auxo_\eps^{-1} &\longrightarrow \auxo', \quad \text{strongly, as } \e\to 0,
\end{align*}
and $ \normo{\auxo'} \leq b <\infty $,
where the operator $ \auxo': \Lsp^2(\R^{2n})\to\Lsp^2(\R^{2n}) $
is built from the limit of each $ \auxo_{\e,p}^{-1} $ as $ \pFou{\auxo'u}(\Cdot,p) := \auxo'_{p} \pFou{u}(\Cdot,p) $.
\end{lemma}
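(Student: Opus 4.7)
The plan is to work entirely in the partial-Fourier domain in $y_1$ and exploit the Plancherel identity
\begin{align*}
\norm{u}^2_{\Lsp^2(\R^{2n})} = \int_{\R^{2n-2}} \norm{\pFou{u}(\cdot,p)}^2_{\Lsp^2(\R^2)} \, \d p
\end{align*}
to reduce every claim about $\auxo_\e$ or $\auxo'$ to a fiberwise statement about the 2D operators $\auxo_{\e,p}$ or $\auxo'_p$ followed by an integration in $p$. This mirrors the way $\auxo_\e$ is defined, so the fiberwise viewpoint is natural.

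First I would produce the candidate inverse. Define $\Bo_\e$ by
\begin{align*}
\pFou{\Bo_\e u}(\cdot,p) := \auxo_{\e,p}^{-1}\,\pFou{u}(\cdot,p),
\end{align*}
and observe by Plancherel combined with the uniform bound $\normo{\auxo_{\e,p}^{-1}}\leq b$ that $\normo{\Bo_\e}\leq b$ for every $\e\in(0,1)$. The fiberwise identities $\auxo_{\e,p}^{-1}\auxo_{\e,p}=\auxo_{\e,p}\auxo_{\e,p}^{-1}=\Io_{\Lsp^2(\R^2)}$ together with the definition of $\auxo_\e$ give $\Bo_\e\auxo_\e=\auxo_\e\Bo_\e=\Io$, so $\auxo_\e$ is invertible with $\auxo_\e^{-1}=\Bo_\e$ and $\normo{\auxo_\e^{-1}}\leq b$. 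Likewise, since $\auxo_{\e,p}^{-1}\to\auxo'_p$ in operator norm for each fixed $p$, the limit satisfies $\normo{\auxo'_p}\leq b$, and defining $\auxo'$ by $\pFou{\auxo' u}(\cdot,p):=\auxo'_p\,\pFou{u}(\cdot,p)$ gives $\normo{\auxo'}\leq b$ by the same Plancherel computation.

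For the strong convergence, fix $u\in\Lsp^2(\R^{2n})$ and apply Plancherel to the difference,
\begin{align*}
\norm{\auxo_\e^{-1}u-\auxo' u}^2 = \int_{\R^{2n-2}} \norm{(\auxo_{\e,p}^{-1}-\auxo'_p)\,\pFou{u}(\cdot,p)}^2\,\d p.
\end{align*}
The integrand is pointwise dominated by $(2b)^2\,\norm{\pFou{u}(\cdot,p)}^2$, which is integrable in $p$, and by hypothesis $\auxo_{\e,p}^{-1}-\auxo'_p\to 0$ in operator norm for each $p$, so the integrand tends to zero pointwise in $p$ as $\e\to0$. The dominated convergence theorem then yields the desired limit.

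The argument is essentially routine once the fiberwise viewpoint is adopted; there is no real obstacle. The only subtlety worth flagging is the measurability of $p\mapsto\auxo_{\e,p}^{-1}\pFou{u}(\cdot,p)$ and $p\mapsto\auxo'_p\pFou{u}(\cdot,p)$ as $\Lsp^2(\R^2)$-valued maps, which is inherited from the measurability implicit in the definition of $\auxo_\e$ (so that $\Bo_\e$ and $\auxo'$ genuinely define operators on $\Lsp^2(\R^{2n})$) together with the pointwise-in-$p$ operator-norm convergence $\auxo_{\e,p}^{-1}\to\auxo'_p$.
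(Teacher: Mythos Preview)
Your proposal is correct and follows essentially the same approach as the paper: define the inverse fiberwise via $\pFou{\auxo_\e^{-1}u}(\cdot,p)=\auxo_{\e,p}^{-1}\pFou{u}(\cdot,p)$, check the norm bound via Plancherel, and obtain strong convergence by writing $\norm{\auxo_\e^{-1}u-\auxo'u}^2$ as an integral over $p$ and applying dominated convergence with the dominating function $4b^2\norm{\pFou{u}(\cdot,p)}^2$. The paper's proof is essentially identical, though it does not comment on the measurability point you raise.
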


\begin{proof}
We begin by constructing the inverse of $ \auxo_\e $.
By assumption each $ \auxo_{\eps,p} $ has inverse $ \auxo^{-1}_{\eps,p} $,
from which we define
$
	\pFou{\auxo'_{\e} u}(\Cdot,p) := \auxo^{-1}_{\eps,p} \pFou{u}(\Cdot,p).
$
It is readily checked that $ \normo{\auxo'_{\e}} \leq \sup_{\e,p}\|\auxo_{\eps,p}^{-1}\|\leq b $,
and the operator $ \auxo'_{\e} $ actually gives the inverse of $ \auxo_\e $, 
i.e., $ \auxo'_\e\auxo_\e = \auxo_\e\auxo'_\e = \Io $.
Note that, for each $ p\in\R^{2n-2} $, the operator $ \auxo'_{p} $ inherits a bound from $ \auxo^{-1}_{\e,p} $,
i.e., $ \sup_{p} \normo{\auxo'_{p}} \leq \sup_{\e,p} \normo{ \auxo^{-1}_{\e,p}} \leq b $.
Together with the definition of $\auxo'$ we also have $ \normo{\auxo'} \leq b $.

It remains to check the strong convergence. 
For each $ u\in\Lsp^2(\R^{2n}) $ we have
\begin{align*}
	\norm{ \auxo_\eps^{-1}u- \auxo'u }^2
	&=
	\int_{\R^{2n-2}} \Big( \int_{\R^2} |\auxo_{\eps,p}^{-1}\pFou{u}(y_1,p) - \auxo'_{p}\pFou{u}(y_1,p)|^2 \, \d y_1 \Big) \d p
\\
	&
	\leq
	\int_{\R^{2n-2}} \left(\normo{ \auxo_{\eps,p}^{-1} -\auxo'_{p} }^2 \,\int_{\R^2} |\pFou{u}(y_1,p)|^2 \, \d y_1\right) \d p.
\end{align*}
The integrand within the last integral converges to zero pointwisely,
and is dominated by $ 4b^2|\pFou{u}(y_1,p)|^2 $, which is integrable over $ \R^{2n} $.
Hence by the dominated convergence theorem $ \norm{ \auxo_\eps^{-1}u- \auxo'u }^2 \to 0 $.
\end{proof}

With Lemma~\ref{l.auxi},
we next establish the norm boundedness and strong convergence of $ (\beta^{-1}_\e\Io -\phi \, \So_{\e 12}\Go_z\So^*_{\e 12} \phi)^{-1} $ in two steps,
first for \emph{fixed} $ p \in\R^{2n-2} $.
Slightly abusing notation, 
in the following lemma, we also treat $ \Pro_\phi $ (defined in~\eqref{e.Pro})
as its analog on $ \Lsp^2(\R^2) $, namely the projection operator $ \Pro_\phi f(y_1) := \phi(y_1) \int_{\R^2} \phi(y'_1) f(y'_1) \, \d y'_1 $.
\begin{lemma}\label{l.diagonal.fixed}
For each $ p \in\R^{2n-2} $, define an operator $ \auxo_{\e,p}:\Lsp^2(\R^2) \to \Lsp^2(\R^2) $,
\begin{align}
	\label{e.auxo.choice}
	\auxo_{\e,p} f(y_1) 
	:= 
	\beta_\e^{-1} f(y_1) - \phi(y_1) \int_{\R^2} \tfrac{1}{2} \Gtwo_{\eps^2(\frac12z-\frac14|p|^2_{2-n})}(y_1-y'_1) \phi(y'_1) f(y'_1) \, \d y'_1.
\end{align}
Then, there exist constants $ C_1<\infty,C_2(\betaf)>0 $ such that,
for all $ \Re(z)<-e^{\betaC+C_1} $ and $ \e\in(0,1/C_2(\betaf)) $,
\begin{align*}
	&
	\Normo{ ( \auxo_{\e,p})^{-1} } 
	\leq 
	C \, (\log(-\Re(z))-\betaC)^{-1},
\\	
	&
	\big( \auxo_{\e,p} \big)^{-1}
	\longrightarrow
	\frac{ 4\pi }{ \log(\frac12|p|^2_{2-n}-z) - \betaC} \Pro_\phi,
	\
	\text{ in norm as } \e\to 0, \text{ for each fixed } p\in\R^{2n-2}.
\end{align*}
\end{lemma}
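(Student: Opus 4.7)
The plan is to use Lemma~\ref{l.Gtwo} to pull out the logarithmic divergence of $\tfrac12\Gtwo_{\e^2\zeta}(x)$, where $\zeta := \tfrac{z}{2} - \tfrac{|p|^2_{2-n}}{4}$, observe that it exactly cancels the leading $-\log\e/(2\pi)$ divergence of $\beta_\e^{-1}$ from \eqref{e.betaeps}, and then invert the resulting operator via Schur complement in the orthogonal decomposition $\Lsp^2(\R^2) = \C\phi \oplus \phi^\perp$ (recall $\|\phi\|_{\Lsp^2}=1$).

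Concretely, Lemma~\ref{l.Gtwo} gives, after using $-\zeta = \tfrac12(\tfrac12|p|^2_{2-n}-z)$,
\begin{align*}
\tfrac12\Gtwo_{\e^2\zeta}(x) = -\tfrac{\log\e}{2\pi} + \tfrac{\log 2}{2\pi} - \tfrac{1}{4\pi}\log\big(\tfrac12|p|^2_{2-n}-z\big) - \tfrac{\cEM}{2\pi} - \tfrac{1}{2\pi}\log|x| + r_{\e,\zeta}(x),
\end{align*}
with $|r_{\e,\zeta}(x)| \leq C\e\sqrt{-\zeta}\,|x|$ whenever $\e\sqrt{-\zeta}\,|x|\leq 1$. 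Writing $K_{\e,p} := M_\phi[\tfrac12\Gtwo_{\e^2\zeta}*]M_\phi$, so that $\auxo_{\e,p} = \beta_\e^{-1}\Io - K_{\e,p}$, the expansion decomposes
\begin{align*}
K_{\e,p} = c_{\e,z,p}\Pro_\phi + M_\phi L M_\phi + M_\phi R_{\e,\zeta}M_\phi,
\end{align*}
where $c_{\e,z,p}$ absorbs the four constants, $L$ is convolution with $-\log|\cdot|/(2\pi)$, and $R_{\e,\zeta}$ is convolution with $r_{\e,\zeta}$. Combining the expansion $\beta_\e^{-1} = -\tfrac{\log\e}{2\pi} - \tfrac{\betaf}{2\pi} + O(|\log\e|^{-1})$ from \eqref{e.betaeps}, the identity $\ip{\phi}{M_\phi L M_\phi\phi} = -\tfrac{\betaphi}{2\pi}$, and the definition $\betaC = 2(\log 2 + \betaf - \betaphi - \cEM)$ produces the crucial cancellation
\begin{align*}
\ip{\phi}{\auxo_{\e,p}\phi} = \tfrac{1}{4\pi}\big[\log(\tfrac12|p|^2_{2-n}-z)-\betaC\big] + O\big(|\log\e|^{-1}\big) + O(\e),
\end{align*}
whose real part is $\geq (\log(-\Re z)-\betaC)/(4\pi) - o(1)$ for $\Re(z) < -e^{\betaC+C_1}$ and $\e<1/C_2(\betaf)$ with $C_1,C_2$ chosen large.

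I then view $\auxo_{\e,p}$ as a $2\times 2$ block matrix on $\C\phi\oplus\phi^\perp$. The $(\phi,\phi)$ block equals the scalar above times $\Pro_\phi$; the $(\phi^\perp,\phi^\perp)$ block is $\beta_\e^{-1}\Io|_{\phi^\perp}$ minus an $\e$-uniformly bounded operator (since $M_\phi L M_\phi$ is Hilbert--Schmidt, using $\phi\in\Csp^\infty_\cmp$ and local integrability of $\log$), hence invertible with inverse norm $\leq 2\beta_\e$ once $\e$ is small; the off-diagonal blocks are bounded uniformly in $\e,p$. Schur complement then gives
\begin{align*}
\auxo_{\e,p}^{-1} = \big(\ip{\phi}{\auxo_{\e,p}\phi} + O(\beta_\e)\big)^{-1}\Pro_\phi + O(\beta_\e),
\end{align*}
yielding both the norm bound $\|\auxo_{\e,p}^{-1}\|\leq C(\log(-\Re z)-\betaC)^{-1}$ and, for each fixed $p$ as $\e\to 0$, the norm convergence to $\tfrac{4\pi}{\log(\tfrac12|p|^2_{2-n}-z)-\betaC}\Pro_\phi$.

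The main obstacle is uniformity in $p$ for the norm bound. The expansion above is only useful while $\e\sqrt{-\zeta}\lesssim 1$ (so that $r_{\e,\zeta}$ is small on the support of $\phi$), which fails when $\e\,|p|_{2-n}\gg 1$. In that regime one instead exploits the exponential decay $K_0(r) = O(e^{-r}/\sqrt{r})$ of the Bessel function, which makes $\Gtwo_{\e^2\zeta}(x) = K_0(\e\sqrt{-2\zeta}\,|x|)/\pi$ small on the support of $\phi^{\otimes 2}$, so that $\|K_{\e,p}\|\to 0$ and $\auxo_{\e,p}^{-1}$ is $\beta_\e\Io$ plus a negligible correction, which easily satisfies the claimed bound. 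Splitting into moderate and large $|p|_{2-n}$ regimes and patching yields the full uniform estimate.
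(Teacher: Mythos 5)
Your approach is essentially the same as the paper's: use Lemma~\ref{l.Gtwo} to expose the $|\log\e|$-divergence of $\tfrac12\Gtwo_{\e^2\zeta}$, check it cancels the leading $\tfrac{|\log\e|}{2\pi}$ of $\beta_\e^{-1}$, then invert perturbatively, with a separate argument for the regime $\e^2|\zeta|\gtrsim 1$ where the expansion of Lemma~\ref{l.Gtwo} is no longer useful. Your bookkeeping of the constants, including the identity $\ip{\phi}{M_\phi L M_\phi\phi}=-\betaphi/(2\pi)$ and its role in recovering $\betaC$, is correct and matches the paper.

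The technical route you take for the inversion differs slightly from the paper's. You invert $\auxo_{\e,p}$ by a Schur complement on $\C\phi\oplus\phi^{\perp}$ and absorb the $\betaphi$ contribution directly into the scalar $(\phi,\phi)$ block. The paper instead sets $\geno := \beta_\e^{-1}\Pro_\perp + \tfrac{1}{4\pi}(\log(\tfrac12|p|^2_{2-n}-z)-\betaCe')\Pro_\phi$ (which contains $\log 2,\cEM,\betaf$ but not $\betaphi$), writes $\auxo_{\e,p}=\geno-\tilde\geno$ with $\tilde\geno = -\logo_\phi+\smallo_{\e,z,p}$, applies the Neumann series \eqref{e.inverse.id}, and only at the end recovers $\betaphi$ by summing $\sum_m\Pro_\phi\big(\ldots(-\logo_\phi)\Pro_\phi\big)^m$, using $\Pro_\phi\logo_\phi\Pro_\phi=\tfrac{\betaphi}{2\pi}\Pro_\phi$. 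Both are standard and give the same answer; your Schur complement exposes the cancellation of $\betaphi$ one step earlier, at the cost of having to verify boundedness of the off-diagonal blocks and invertibility of the $\phi^\perp$ block separately (which you do).

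There is one loose spot in the large-$|\zeta|$ regime. You claim the exponential decay $K_0(r)=O(e^{-r}/\sqrt r)$ makes $\Gtwo_{\e^2\zeta}(x)$ "small on the support of $\phi^{\otimes 2}$," but this cannot be literally true: $\Gtwo_{\e^2\zeta}$ retains its logarithmic singularity at $x=0$, which always lies in the support. The correct statement is that the \emph{operator} norm (or $L^1$-norm of the rescaled kernel) is small, because the singular region shrinks. The paper's argument is cleaner here: it writes $K_{\e,p}=\phi\tfrac12\Go^{(n=1)}_{\e^2\zeta}\phi$ and uses the exact operator-norm identity $\normo{\Go^{(n=1)}_{\e^2\zeta}}=|\e^2\zeta|^{-1}\leq 2$ in the regime $|\tfrac12|p|^2_{2-n}-z|>\e^{-2}$, which sidesteps the singularity entirely. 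Also note that your condition "$\e|p|_{2-n}\gg 1$" is only one way $\e\sqrt{|\zeta|}\lesssim 1$ can fail; the split should really be governed by the combination $\e^2|\tfrac12|p|^2_{2-n}-z|$, as in the paper's cases (i)/(ii) (you do state the correct cutoff earlier, so this is only a phrasing slip).
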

\begin{proof}
Through out the proof, we say a statement holds for $ -\Re(z) $ large enough,
if the statement holds for all $ -\Re(z)> e^{\betaC+C} $, for some fixed constant $C<\infty $,
and we say a statement holds for all $ \e $ small enough,
if the statement holds for all $ \e < 1/C(\betaf) $, for some constant $ C(\betaf)<\infty $ that depends only on $ \betaf $.

Our first goal is to show $ \auxo_{\e,p} $ is invertible and establish bounds on $ \normo{\auxo_{\e,p}^{-1}} $.
We do this in two separate cases: \textit{i}) $ |\frac12|p|^2_{2-n}-z| \leq \e^{-2} $ 
and \textit{ii}) $ |\frac12|p|^2_{2-n}-z| > \e^{-2} $.

\medskip
\textit{i})
The first step here is to derive a suitable expansion of $ \auxo_{\e,p} $.
Recall that, we have abused notation to write $ \Pro_\phi $ (defined in~\eqref{e.Pro})
for the projection operator $ \Pro_\phi f(y_1) := \phi(y_1) \int_{\R^2} \phi(y'_1) f(y'_1) \, \d y'_1 $.
Applying Lemma~\ref{l.Gtwo} yields
\begin{align}
	\label{e.I-BGB.}
	\auxo_{\e,p}
	=
	\beta_\eps^{-1}\Io 
	+
	\Big( -\tfrac{1}{2\pi}|\log\e| + \tfrac{1}{4\pi} \log(\tfrac12|p|^2_{2-n}-z) - \tfrac1{2\pi}\log 2 + \tfrac1{2\pi}\cEM  \Big) \Pro_\phi	
	+	
	\logo_\phi	
	-
	\smallo_{\e,z,p},
\end{align}
where $ \logo_\phi $ and $ \smallo_{\e,z,p} $ are integral operators $ \Lsp^{2}(\R^{2})\to\Lsp^2(\R^{2}) $ defined as
\begin{align}
	\label{e.logo}
	(\logo_\phi f)(y_1) &:= \frac{1}{2\pi} \phi(y_1) \int_{\R^2} \log|y_1-y'_1| \phi(y'_1) f(y'_1) \, \d y'_1,
\\
	\label{e.smallo}
	(\smallo_{\e,z,p} f)(y_1) &:= \frac12 \phi(y_1) \int_{\R^2} \smallt\Big(\tfrac12|y_1-y'_1|\e\sqrt{\tfrac12|p|^2_{2-n}-z}\Big) \phi(y'_1) f(y'_1)
	\, \d y'_1,
\end{align}
and the function $ \smallt(\Cdot) $ is the remainder term in Lemma~\ref{l.Gtwo}.
Let $ \Pro_\perp := \Io - \Pro_\phi $ denote the orthogonal projection onto $ (\C\phi)^\perp $ in $ \Lsp^2(\R^2) $
and recall $ \beta_\e $ from~\eqref{e.betaeps}.
In~\eqref{e.I-BGB.}, decomposing $ \beta_\e^{-1} \Io = \beta_\e^{-1} \Pro_\perp + \frac{1}{2\pi}(|\log \e|-\betafe) \Pro_\phi $,
where $
	\betafe := |\log\e| - |\log\e|(1+\tfrac{\betaf}{|\log\e|})^{-1}$, 
we rearrange terms to get
\begin{align}
	\label{e.I-BGB.d}
	\auxo_{\e,p}
	=
	\beta_\eps^{-1}\Pro_\perp
	+
	\tfrac{1}{4\pi} \big( \log(\tfrac12|p|^2_{2-n}-z) -\betaCe' \big) \Pro_\phi 
	+	
	\logo_\phi	
	-
	\smallo_{\e,z,p},
\end{align}
where $
	\betaCe':=  2(\log 2+\betafe-\cEM)$.
We next take the inverse of $ \auxo_{\e,p} $ from \eqref{e.I-BGB.d}, 
utilizing
\begin{align}
	\label{e.inverse.id}
	\big( \geno-\tilde{\geno} \big)^{-1} &= \sum_{m=0}^\infty \geno^{-1} \big( \tilde{\geno}\geno^{-1} \big)^m,
\qquad
	\normo{( \geno-\tilde{\geno})^{-1}} \leq \normo{\geno^{-1}}/(1-\normo{\geno^{-1}} \normo{\tilde{\geno}} ),
\end{align}
valid for operators $ \geno, \tilde{\geno} $ such that $ \geno $ is invertible with $ \normo{\geno^{-1}} \normo{\tilde{\geno}} < 1 $.
Our choice will be
$
	\geno
	:= 
	\beta_\eps^{-1}\Pro_\perp
	+
	\tfrac{1}{4\pi} \big( \log(\tfrac12|p|^2_{2-n}-z) - \betaCe' \big) \Pro_\phi$ and 
$ \tilde{\geno} :=-\logo_\phi+\smallo_{\e,z,p} $.

From~\eqref{e.logo}, we have $ \normo{\logo_\phi} <\infty $.
Under our current assumption $ |\frac12|p|^2_{2-n}-z| \leq \e^{-2} $,
from~\eqref{e.smallo} and the property of $ \smallt(\Cdot) $ stated in Lemma~\ref{l.Gtwo},
we have $ \normo{\smallo_{\e,z,p}} \leq C <\infty $.
Hence
\begin{align}
	\label{e.genostartez.bd}
	 \Normo{-\logo_\phi+\smallo_{\e,z,p}} \leq C.
\end{align}
With $ \Pro_\perp $ and $ \Pro_\phi $ being projection operators orthogonal to each other,
we calculate
\begin{align}
	\label{e.geno.star}
	\left(\beta_\eps^{-1}\Pro_\perp
	+
	\tfrac{1}{4\pi} \big( \log(\tfrac12|p|^2_{2-n}-z) - \betaCe' \big) \Pro_\phi\right)^{-1} 
	= 
	\beta_\eps\Pro_\perp
	+
	4\pi \big( \log(\tfrac12|p|^2_{2-n}-z) -\betaCe' \big)^{-1} \Pro_\phi.
\end{align}
The operator norm of this inverse is thus bounded by $\max\{ \beta_\e \, , \, \frac{4\pi}{ \log(-\Re(z))-\betaCe'} \}$. 
Since $\betaCe' \to \betaC + 2\betaphi$ and  $ \beta_\e \to 0 $, this allows us to get a convergent series  	\eqref{e.inverse.id} for  $ -\Re(z) $ large enough and $ \e $ small enough,
with 
$ 
	\normo{ \auxo^{-1}_{\e,p} }
	\leq
	C (\log(-\Re(z))-\betaC)^{-1}$.

\medskip
\textit{ii})  Now we consider the case $ |\frac12|p|^2_{2-n}-z| > \e^{-2} $.
We apply~\eqref{e.inverse.id} again to ~\eqref{e.auxo.choice} with $ \geno = \beta^{-1}_\e \Io $.
To check the relevant condition, we write the operator $ \auxo_{\e,p} $ (in \eqref{e.auxo.choice}) in a coordinate-free form as
$
	\auxo_{\e,p}
	= 
	\beta_\e^{-1} \Io - \phi \tfrac{1}{2} \Go^{(n=1)}_{\eps^2(\frac12z-\frac14|p|^2_{2-n})} \phi$,
where $ \Go^{(n=1)}_z $ denotes the two-dimensional Laplace resolvent.
Recall that $ \Re(z) < - e^{-\betaC+C_1}<0 $,
so $ \Re(\frac12 z-\frac14|p|^2_{2-n}) <0 $, which gives
$
	\normo{ \Go^{(n=1)}_{\eps^2(\frac12z-\frac14|p|^2_{2-n})}  }
	=
	 |\eps^2(\frac12z-\frac14|p|^2_{2-n})|^{-1 }
$.
Under the current assumption $ |\frac12|p|^2_{2-n}-z|>\e^{-2} $,  this is bounded by $ 2 $, so
\begin{align*}
	\Norm{ \phi \tfrac{1}{2} \Go^{(n=1)}_{\eps^2(\frac12z-\frac14|p|^2_{2-n})} \phi } \leq C.
\end{align*}
Since $ \beta^{-1}_\e \to \infty $, \eqref{e.inverse.id} applied to \eqref{e.auxo.choice} with $ \geno = \beta^{-1}_\e \Io $,
show that $  \auxo^{-1}_{\e,p} $ exists with $ 	\normo{ \auxo^{-1}_{\e,p} } \leq C \, (\log \e)^{-1} $,
for all $ \e $ small enough.

Having obtained $ \auxo_{\e,p}^{-1} $ and its bound, we next show the norm convergence.
The condition $|\frac12|p|^2_{2-n}-z| \leq \e^{-2} $ holds for all $ \e \leq C(p) $,
whence we have from~\eqref{e.inverse.id} that 
\begin{align}
	\label{e.I-BGB.d.}
	\auxo^{-1}_{\e,p}
	=
	\Big(
		\beta_\eps\Pro_\perp
		+
		\frac{4\pi}{  \log(\tfrac12|p|^2_{2-n}-z) -\betaCe' } \Pro_\phi
	\Big) 
	\sum_{m=0}^\infty 
	\Big(
		(-\logo_\phi+\smallo_{\e,z,p})\Big(\beta_\eps\Pro_\perp
		+
	\frac{4\pi}{ \log(\tfrac12|p|^2_{2-n}-z) -\betaCe' } \Pro_\phi\Big) 
	\Big)^{m}.
\end{align}
We now take termwise limit in~\eqref{e.I-BGB.d.}. 
Referring to~\eqref{e.smallo}, with $ p\in\R^{2n-2} $ being \emph{fixed},
the linear growth property of $ \smallt(\Cdot) $ in Lemma~\ref{l.Gtwo} gives that
$ \smallo_{\e,z,p} $ converges to $ 0 $ in norm.
Since $ \beta_\e \to 0 $, 
\begin{align*}
	\beta_\eps\Pro_\perp
	+
	\frac{4\pi}{  \log(\tfrac12|p|^2_{2-n}-z) -\betaCe' } \Pro_\phi \longrightarrow \frac{ 4\pi }{ \log(\frac12|p|^2_{2-n}-z) -\betaC-2\betaphi} \Pro_\phi,
	\quad
	\text{in norm}.
\end{align*}
Further, the bound~\eqref{e.genostartez.bd} guarantees that,
for all $ -\Re(z) $ large enough, the series \eqref{e.I-BGB.d.} converges absolutely in norm, uniformly for all $ \e $ small enough.
From this we conclude $ \auxo^{-1}_{\e,p} \to \auxo'_{p} $ in norm, where 
\begin{align}
	\label{e.auxo'tilp}
	\auxo'_{p}
	:=
	\frac{ 4\pi }{ \log(\frac12|p|^2_{2-n}-z) -\betaC'-2\betaphi}  
	\sum_{m=0}^\infty \Pro_\phi \Big(\frac{ 4\pi }{ \log(\frac12|p|^2_{2-n}-z) -\betaC-2\betaphi} (-\logo_\phi)\Pro_\phi \Big)^{m}.
\end{align}
This expression can be further simplified 
using $ \Pro_\phi^m=\Pro_\phi $ and $ \Pro_\phi \logo_\phi \Pro_\phi = \frac{\betaphi}{2\pi} \Pro_\phi $,
\begin{align*}
	\auxo^{'}_{p}
	=
	\frac{ 4\pi }{ \log(\frac12|p|^2_{2-n}-z) -\betaC-2\betaphi}  
	\sum_{m=0}^\infty \Pro_\phi \Big(\frac{ -2\betaphi }{ \log(\frac12|p|^2_{2-n}-z) -\betaC-2\betaphi} \Pro_\phi \Big)^{m}
	=
	\frac{ 4\pi }{ \log(\frac12|p|^2_{2-n}-z) - \betaC} \Pro_\phi.	
\end{align*}
This completes the proof.
\end{proof}

Recall $ \Jo_z $ from~\eqref{e.Jo}.
Combining Lemmas~\ref{l.auxi}--\ref{l.diagonal.fixed} immediately gives the main result of this section:
\begin{lemma}\label{l.diagonal}
There exist constants $ C_1<\infty, C_2(\betaf)>0 $ such that, 
for all $ \Re(z)<-e^{\betaC+C_1} $,
and for all $ \e\in(0,1/C_2(\betaf)) $,
the inverse $ (\beta^{-1}_\e\Io -\phi \, \So_{\e 12}\Go_z\So^*_{\e 12}\phi)^{-1}: \Lsp^2(\R^{2n})\to\Lsp^2(\R^{2n}) $ exists, 
with
\begin{align}
	\label{e.I-BGB.bdd}
	&\Normo{ \big( \beta^{-1}_\e\Io -\phi \, \So_{\e 12}\Go_z\So^*_{\e 12} \phi \big)^{-1} } \leq C \, (\log(-\Re(z))-\betaC)^{-1},
\\	
	\label{e.I-BGB.cnvg}
	&
	\big( \beta^{-1}_\e\Io -\phi \, \So_{\e 12}\Go_z\So^*_{\e 12} \phi \big)^{-1}
	\longrightarrow
	4\pi \phi\otimes \big( (\Jo_{z} - \betaC\Io)^{-1}\Po \big),
	\
	\text{strongly, as } \e\to 0.
\end{align}
\end{lemma}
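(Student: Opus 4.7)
The plan is to combine the fiberwise analysis of Lemma~\ref{l.diagonal.fixed} with the lifting Lemma~\ref{l.auxi}, which converts pointwise-in-$p$ statements into strong-operator statements on $\Lsp^2(\R^{2n})$.

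\textbf{Step 1 (fiber decomposition).} The bilinear-form expression in Lemma~\ref{l.G1212} shows that $\phi\So_{\e 12}\Go_z\So^*_{\e 12}\phi$ commutes with the partial Fourier transform $\pFou{\,\cdot\,}$ in the last $n-1$ components: on the fiber at $p=(p_2,\ldots,p_n)\in\R^{2n-2}$ it acts on the $y_1$-slice $\pFou{v}(\cdot,p_{2-n})$ by the integral kernel $\tfrac12\,\phi(y_1)\Gtwo_{\e^2(z/2-|p|^2_{2-n}/4)}(y_1-y'_1)\phi(y'_1)$. Since $\beta_\e^{-1}\Io$ acts fiberwise as $\beta_\e^{-1}$ times the identity on $\Lsp^2(\R^2)$, the whole operator $\beta_\e^{-1}\Io-\phi\So_{\e 12}\Go_z\So^*_{\e 12}\phi$ has a direct-integral structure whose fiber at each $p$ is precisely the operator $\auxo_{\e,p}$ defined in~\eqref{e.auxo.choice}.

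\textbf{Step 2 (apply the lifting lemma).} Lemma~\ref{l.diagonal.fixed} supplies exactly the two hypotheses required by Lemma~\ref{l.auxi}: the uniform bound $\normo{\auxo_{\e,p}^{-1}}\leq C(\log(-\Re z)-\betaC)^{-1}$ valid for all $p\in\R^{2n-2}$, all sufficiently small $\e$, and all $\Re(z)<-e^{\betaC+C_1}$; and norm convergence of $\auxo_{\e,p}^{-1}$ to $\tfrac{4\pi}{\log(\tfrac12|p|^2_{2-n}-z)-\betaC}\Pro_\phi$ at each fixed $p$. Lemma~\ref{l.auxi} then immediately yields existence of the inverse on all of $\Lsp^2(\R^{2n})$, the operator-norm bound~\eqref{e.I-BGB.bdd} inherited from the fiber bound, and strong convergence of the inverse to the operator $\auxo'$ whose fiber at $p$ is $\tfrac{4\pi}{\log(\tfrac12|p|^2_{2-n}-z)-\betaC}\Pro_\phi$.

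\textbf{Step 3 (identify the limit).} It remains to recognize $\auxo'=4\pi\,\phi\otimes\big((\Jo_z-\betaC\Io)^{-1}\Po\big)$, which is a direct unwinding of definitions. The composition $\phi\otimes\Po$ is exactly the rank-one projection $\Pro_\phi$ on the $y_1$-factor (see~\eqref{e.Pro}), and by~\eqref{e.Jo} the operator $(\Jo_z-\betaC\Io)^{-1}$ acts on $\Lsp^2(\R^{2n-2})$ as Fourier multiplication by $(\log(\tfrac12|p|^2_{2-n}-z)-\betaC)^{-1}$. In the partial Fourier picture both operators thus have the same fiber $\tfrac{4\pi}{\log(\tfrac12|p|^2_{2-n}-z)-\betaC}\Pro_\phi$ at every $p$, giving the identification and completing the proof. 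The main conceptual obstacle has already been dispatched upstream in Lemma~\ref{l.diagonal.fixed}, where the logarithmic divergence on the $\phi$-component cancels the leading $\beta_\e^{-1}$ divergence, while on the orthogonal complement the surviving $\beta_\e^{-1}$ divergence becomes a vanishing $O(\beta_\e)$ contribution after inversion; given this, the present lemma is pure direct-integral bookkeeping.
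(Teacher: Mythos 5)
Your proposal is correct and follows exactly the route the paper takes: the paper's proof of Lemma~\ref{l.diagonal} is simply the one-line observation that Lemmas~\ref{l.auxi} and~\ref{l.diagonal.fixed} combine to give the result, and your three steps spell out precisely the fiber decomposition, the application of the lifting lemma, and the identification of the limiting operator that this combination entails.
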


\section{Convergence of the resolvent}
\label{sect.resolcnvg}

In this section we collect the results of 
Sections~\ref{sect.resolvent1}--\ref{sect.diagonal}
to
prove  Proposition~\ref{p.resolvent}\ref{p.resolvent.}--\ref{p.resolvent.sym} and Theorem~\ref{t.resolcnvg}\ref{t.resolcnvg.e}--\ref{t.resolcnvg.cnvg} and the convergence part of Theorem~\ref{t.main}\ref{t.main.dcnvg}.

Proposition~\ref{p.resolvent}\ref{p.resolvent.} and Theorem~\ref{t.resolcnvg}\ref{t.resolcnvg.e}
follow from the bounds obtained in Lemmas~\ref{l.Goin}--\ref{l.BGconvg}, \ref{l.off.diagonal.bdd}, and~\ref{l.diagonal}.
We now turn to Theorem~\ref{t.resolcnvg}\ref{t.resolcnvg.cnvg}.
Recall that Lemma~\ref{l.resolvent.e}, as stated, applies only  for  $ \Re(z)<-C(n,\e) $, for some threshold $ C(n,\e) $ that depends on $ \e $.
Here we argue that the threshold can be improved to be independent of $ \e $.
%
Using the bounds from Lemmas~\ref{l.Goin}--\ref{l.BGconvg}, \ref{l.off.diagonal.bdd}, and \ref{l.diagonal} on the r.h.s.\ of \eqref{e.resolvent.e},
we see that $ (\Ro_{\e,z}-\Go_z) $ defines an analytic function (in operator norm) in $ B:=\{\Re(z)<-e^{Cn^2+\betaC}\} $.
On the other hand, we also know that $ (\Ro_{\e,z}-\Go_z) $ is analytic in $ z $ off $ \sigma(\Ho_{\e})\cup [0,\infty) $, where $ \sigma(\Ho_{\e})\subset \R $ denotes the spectrum of $ \Ho_{\e} $.
Consequently, both sides must match on $ B\setminus\sigma(\Ho_{\e}) $.
We now argue $ B\cap\sigma(\Ho_{\e}) = \varnothing $, so the matching actually holds on the entire $ B $.
Assuming the contrary, we fix $ z_0\in B\cap \sigma(\Ho_{\e}) $, take a sequence $ z_k\in B $ and approaches $ z_k \to z_0 $ along the vertical axis.
Along this sequence $ (\Ro_{\e,z_k}-\Go_{z_k}) $ is bounded,  contradicting $ z_0\in\sigma(\Ho_{\e}) $.

We now show the convergence of the resolvent, i.e.~\eqref{e.resolvent.e} to~\eqref{e.resolvent.}.
As argued previously, both series \eqref{e.resolvent.e} and \eqref{e.resolvent.} converge absolutely in operator norm, uniformly over $ \e $.
It hence suffices to show termwise convergence.
By Lemmas~\ref{l.BGconvg}, \ref{l.Gobi.cnvg}, and \ref{l.diagonal},
each factor in~\eqref{e.resolvent.e.out}--\eqref{e.resolvent.e.in} converges to its limiting counterparts 
in~\eqref{e.resolvent.out}--\eqref{e.resolvent.in}, strongly or in norm.
Using this in conjunction%
with the elementary, readily checked fact
\begin{align*}
	\geno_\e\geno'_\e \rightarrow \geno \geno' \text{ strongly if }
	\geno_\e, \geno'_\e \text{ are uniformly bounded and } \geno_\e \to \geno, \geno'_\e \to \geno' \text{ strongly,}
\end{align*}
we conclude the desired convergence of the resolvent, Theorem~\ref{t.resolcnvg}\ref{t.resolcnvg.cnvg}.

Next we prove Proposition~\ref{p.resolvent}\ref{p.resolvent.sym}.
First, given the bounds from Lemmas~\ref{l.Goin}--\ref{l.BGconvg}, \ref{l.off.diagonal.bdd}, and~\ref{l.diagonal},
we see that $ \Ro^\sym_z $ in~\eqref{e.resolvent.sym} defines a bounded operator on $ \Lsp^2(\R^{2n}) $ for all $ \Re(z)<-e^{\betaC+n^2C} $.
Our goal is to match $ \Ro^\sym_z $ to $ \Ro_z $ on $ \Lsp^2_\sym(\R^{2n}) $, for these values of $ z $.
Apply~\eqref{e.inverse.id} with $ \geno = \frac{1}{4\pi}(\Jo_z-\betaC\Io) $ and with $ \tilde{\geno} = \frac{2}{n(n-1)} \dsum \So_{ij}\Go_z\So^*_{k\ell} $
for the prescribed values of $ z $ (so that the condition for \eqref{e.inverse.id} to apply checks).
We obtain
\begin{align}
\label{e.resolvent.expansion**}
	\Ro^\sym_{z}
	=&
	\Go_{z} 
	+ 
	\sum
	\Go_z\So^*_{i_1j_1} \, 
	\Big( 4\pi (\Jo_z-\betaC\Io)^{-1} \, \prod_{s=2}^{m} \Big( \tfrac{2}{n(n-1)} \So_{k_sk_s}\Go_z\So^*_{i_sj_s} \, 4\pi (\Jo_z-\betaC\Io)^{-1} \Big) \Big) 
	\, \tfrac{2}{n(n-1)}\So_{k_{m+1}k_{m+1}}\Go_z,
\end{align}
where the sum is over all pairs $ (i_1<j_1) $, $ (k_2<\ell_2)\neq(i_2<j_2) $, \ldots, $ (k_{m}<\ell_{m})\neq(i_{m}<j_{m}) $, $ (k_{m+1}<k_{m+1}) $, and all $m$.

At this point we need to use the symmetry of $ \Lsp^2_\sym(\R^{2n}) $. Let %
\begin{align*}
	\Lsp^2_{\sym'}(\R^{2n-2})
	:=
	\big\{ v\in\Lsp^2(\R^{2n-2}): v(y_2,y_{\sigma(3)},\ldots,y_{\sigma(n)})= v(y_2,y_3,\ldots,y_n) \ \sigma \in \mathbb{S}_{n-2} \big\}
\end{align*}
denote the space of functions on $ \R^{2n-2} $ that are symmetric in the last $ (n-2) $ components.
It is readily checked that the incoming operator (i.e., $ \So_{k_{m+1}k_{m+1}}\Go_z $)
maps $ \Lsp^2_\sym(\R^{2n}) $ into $ \Lsp^2_{\sym'}(\R^{2n-2}) $,
that the mediating operators (i.e., $ \So_{k_sk_s}\Go_z\So^*_{i_sj_s} $ and $ 4\pi (\Jo_z-\betaC\Io)^{-1} $)
map $ \Lsp^2_{\sym'}(\R^{2n-2}) $ to $ \Lsp^2_{\sym'}(\R^{2n-2}) $.
Further, given that $ \Go_z $ acts symmetrically in the $ n $ components, we have
\begin{align}
	\label{e.GS.symmetry}
	&\So_{ij}\Go_z\big|_{\Lsp^2_\sym(\R^{2n})} = \So_{i'j'}\Go_z \big|_{\Lsp^2_\sym(\R^{2n})},
	\qquad
	\forall \, (i<j), \, (i'<j').
\end{align}
Also, from~\eqref{e.GbiQ.Four} we have
\begin{align}
	&\So_{k\ell}\Go_z\So^*_{ij}\big|_{\Lsp^2_{\sym'}(\R^{2n-2})} = \So_{\sigma(k)\sigma(\ell)}\Go_z\So^*_{\sigma(i)\sigma(j)}\big|_{\Lsp^2_{\sym'}(\R^{2n-2})},
	\label{e.SGS.symmetry}
	\qquad
	\forall \, (i<j) \neq (k<\ell),
	\
	\sigma \in \mathbb{S}_n.
\end{align}%

In~\eqref{e.resolvent.expansion**}, use \eqref{e.SGS.symmetry} to rearrange the sum over $ (k_2<\ell_2)\neq(i_2<j_2) $ as 
\begin{align*}
	\frac{2}{n(n-1)} \sum_{(k_2<\ell_2)\neq(i_2<j_2) } \So_{k_2\ell_2}\Go_z\So^*_{i_2j_2} \big|_{\Lsp^2_{\sym'}(\R^{2n-2})}
	=
	\sum_{i_2<j_2} \So_{i_1j_1}\Go_z\So^*_{i_2j_2} \big|_{\Lsp^2_{\sym'}(\R^{2n-2})} \ind_\set{(i_2<j_2) \neq (i_1<i_1)}.
\end{align*}
That is, we use \eqref{e.SGS.symmetry} for some $ \sigma\in\mathbb{S}_n $ such that $ (\sigma(k_2)<\sigma(k_2)) = (i_1<j_1) $.
Doing so reduces the sum over double pairs $ (k_2<\ell_2)\neq (i_2<j_2) $ into a sum over a single pair $ (i_2<j_2) $ with $ (i_2<j_2) \neq (i_1<j_1) $,
and the counting in this reduction cancels the prefactor $ 2/(n(n-1)) $.
Continue this procedure inductively from $ s=2 $ through $ s=m $,
and then, at the $ m+1 $ step, similarly use~\eqref{e.GS.symmetry} to write
\begin{align*}
	\frac{2}{n(n-1)} \sum_{k_{m+1}<\ell_{m+1} } \So_{k_{m+1}\ell_{m+1}}\Go_z \big|_{\Lsp^2_\sym(\R^{2n})} = \So_{i_{m}j_{m}}\Go_z \big|_{\Lsp^2_\sym(\R^{2n})}.
\end{align*}
We then conclude Proposition~\ref{p.resolvent}\ref{p.resolvent.sym}, 
\begin{equation} \Ro^\sym_z|_{\Lsp^2_\sym(\R^{2n})} = \Ro_z|_{\Lsp^2_\sym(\R^{2n})} .
\end{equation}

We now turn to the  convergence of the fixed time correlation functions in Theorem~\ref{t.main}\ref{t.main.dcnvg}.  Given Theorem~\ref{t.resolcnvg},
applying the Trotter--Kato Theorem, c.f., \cite[Theorem~VIII.22]{reed72}, 
we know that there exists an (unbounded) self-adjoint operator $ \Ho $ on $ \Lsp^2(\R^{2n}) $,
such that $ \Ro_z $ (in~\eqref{e.resolvent}) is the resolvent for $ \Ho $, i.e., $ \Ro_z =(\Ho-z\Io)^{-1} $, for all $ \Im(z)\neq 0 $.
Theorem~\ref{t.resolcnvg} also guarantees that the spectra of $ \Ho_\e $ and $ \Ho $ are bounded below, uniformly in $ \e $.
More precisely, $ \sigma(\Ho_\e), \sigma(\Ho) \subset (-C_1(n,\betaC),\infty) $, for all $ \e\in(0,1/C_2(\betaf)) $, for some $ C_1(n,\betaC)<\infty $ and $ C_2(\betaf)>0 $.
Fix $ t\in\R_+ $. We now apply \cite[Theorem~VIII.20]{reed72}, which says that if self-adjoint operators $ \Ho_\e \to \Ho $ in the strong resolvent sense, and $f$ is bounded and continuous on $\R$ then $f(\Ho_\e) \to f(\Ho )$ strongly.  
We use $ f(\lambda) = e^{(-t\lambda)\wedge C_1(n,\betaC)} $,
which is bounded and continuous, and from what we have proved, $ f(\Ho_\e)=e^{-t\Ho_\e} $ and $ f(\Ho)=e^{-t\Ho} $.
Hence
\begin{align}
	\label{e.sg.strong}
	e^{-t\Ho_\e} \longrightarrow e^{-t\Ho}
	\quad
	\text{strongly on } \Lsp^2(\R^{2n}), \text{ for each fixed } t\in\R_+.
\end{align}
For Theorem \ref{t.main}\ref{t.main.dcnvg},
we wish to upgrade this convergence to be \emph{uniform} over finite intervals in $ t $.
Given the lower bound on the spectra, we have the uniform (in $ \e $) norm continuity:
\begin{align*}
	\Normo{ e^{-t\Ho_\e} -e^{-s\Ho_\e} } + \Normo{ e^{-t\Ho} -e^{-s\Ho} }
	\leq
	C_2(n,\betaC)|t-s|e^{C_2(n,\beta)(t\vee s)},
\end{align*}
for all $ \e\in(0,1/C_2(\betaf)) $ and $ s,t\in[0,\infty) $.
This together with~\eqref{e.sg.strong} gives
\begin{align*}
	\lim_{\e\to 0}\sup_{t\in[0,\tau]} \Norm{ e^{-t\Ho_\e} u - e^{-t\Ho} u } = 0, 
	\quad
	u \in \Lsp^2(\R^{2n}),
	\
	\tau<\infty.
\end{align*}
Comparing this with~\eqref{e.mom.semigroup}, we now have, for each fixed $ g\in\Lsp^2(\R^{2n}) $,
\begin{align}
	\label{e.semigroup.cnvg}
	\E\big[ \ip{ Z_{\e,t}^{\otimes n}}{g} \big] 
	\longrightarrow
	\Ip{ Z_\ic^{\otimes n} }{ e^{-t\Ho} g },
	\quad
	\text{uniformly over finite intervals in }t.
\end{align}
What is missing for the proof of Theorem~\ref{t.main}  is the identification of the semigroup $ e^{-t\Ho} $ with the explicit operators defined in \eqref{e.Diagram.op}, \eqref{e.diagram.op}.  This is the subject of the next section.

\section{Identification of the limiting semigroup}
\label{sect.laplace}

The remaining task is to match $ e^{-t\Ho} $ to the operator $ \Pt_t+\Dio^{\diag(n)}_t $ on r.h.s.\ of~\eqref{e.mom.cnvg}.
To rigorously perform the heuristics in Remark \ref{remheur}, 
it is more convenient to operate in the forward Laplace transform, i.e., going from $ t $ to $ z $.
Doing so requires
establishing bounds on the relevant operators in \eqref{e.diagram.op}, 
and verifying the semigroup property of $ \Pt_t + \Dio_t^{\diag(n)} $, defined in~\eqref{e.Diagram.op}.
The bounds will be established in Section~\ref{sect.t,op.bd},
and, as the major step toward verifying the semigroup property, we establish an identity in Section~\ref{sect.semigroup}.


\subsection{Bounds and Laplace transforms}
\label{sect.t,op.bd}
We begin with the incoming and outgoing operators.
We now establish a quantitative bound on the norms of $\So_{ij}\Pt_t$ and $\Pt_t\So_{ij}^*$ , and  match them to the corresponding Laplace transform.

\begin{lemma}\label{l.Ptin.out}
\begin{enumerate}[label=(\alph*)]
\item[]
\item\label{l.Ptin.out.} 
For each pair $ i<j $ and $ t\in\R_+ $, $ \So_{ij}\Pt_t: \Lsp^2(\R^{2n})\to\Lsp^2(\R^{2n-2}) $ 
and $ \Pt_t\So^*_{ij}: \Lsp^2(\R^{2n-2})\to\Lsp^2(\R^{2n}) $
are bounded with
\begin{align*}
	\normo{\So_{ij}\Pt_{t}}+\normo{\Pt_{t}\So^*_{ij}} \leq C t^{-1/2}.
\end{align*}
\item \label{l.Ptin.out.Lap}
For each pair $ i<j $, $ \Re(z)<0 $, $ u\in\Lsp^2(\R^{2n}) $, and $ v\in\Lsp^2(\R^{2n-2}) $, 
\begin{align*}
	\int_{\R_{+}} e^{tz} \ip{v}{\So_{ij}\Pt_t u} \, \d t 
	=
	\int_{\R_{+}\times\R^{4n-2}} e^{tz} \bar{v(y)} \hk(t,S_{ij}y-x)  u(x) \, \d t \d y \d x  
	= 
	\ip{u}{\So_{ij}\Go_zv},
\\
	\int_{\R_{+}} e^{tz} \ip{u}{\Pt_t\So^*_{ij}v} \, \d t 
	=
	\int_{\R_{+}\times\R^{4n-2}} e^{tz} \bar{u(x)} \hk(t,x-S_{ij}y)  v(y) \, \d t \d x \d y  
	= 
	\ip{u}{\Go_z\So^*_{ij}v},
\end{align*}
where the integrals converge absolutely (over $ \R_+ $ and over $ \R_+\times\R^{2n-4} $).
\end{enumerate}
\end{lemma}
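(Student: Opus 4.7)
The plan is to treat both parts through Fourier-side calculations based on Lemma~\ref{l.So}. For part~\ref{l.Ptin.out.}, since $\Pt_t$ is self-adjoint and $\So_{ij}^*$ is the adjoint of $\So_{ij}$, I have $\normo{\So_{ij}\Pt_t}=\normo{\Pt_t\So_{ij}^*}$, so it suffices to bound the first. Combining~\eqref{e.S.Four} with $\widehat{\Pt_t u}(p)=e^{-t|p|^2/2}\hat{u}(p)$ gives
\[
	\widehat{\So_{ij}\Pt_t u}(q_{2-n})
	=\int_{\R^2} e^{-t|\M_{ij}q|^2/2}\hat{u}(\M_{ij}q)\,\frac{\d q_1}{2\pi}.
\]
A Cauchy--Schwarz inequality in $q_1$ that splits the Gaussian factor as $e^{-t|\M_{ij}q|^2/4}\cdot e^{-t|\M_{ij}q|^2/4}$, followed by the bound $|\M_{ij}q|^2\geq 2|q_1|^2$ (immediate from~\eqref{e.M}) on one factor, reduces matters to the planar Gaussian $\int_{\R^2} e^{-t|q_1|^2}\,\d q_1/(2\pi)^2=(4\pi t)^{-1}$. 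Integrating the remaining factor over $q_{2-n}$ and changing variables $p=\M_{ij}q$ (with $|\det\M_{ij}|=1$) yields $\|u\|^2$ by Plancherel, so $\normo{\So_{ij}\Pt_t}\leq C t^{-1/2}$.

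For part~\ref{l.Ptin.out.Lap}, the middle kernel expressions are obtained by substituting the kernels~\eqref{e.Pt.in.ker}--\eqref{e.Pt.out.ker} into the respective $\Lsp^2$ pairings. Absolute integrability in $t$ follows from part~\ref{l.Ptin.out.}: $|e^{tz}\ip{v}{\So_{ij}\Pt_t u}|\leq C\|v\|\|u\| e^{t\Re(z)} t^{-1/2}$, which is integrable on $(0,\infty)$ since $t^{-1/2}$ is integrable at $0$ and $\Re(z)<0$ supplies decay at infinity. Fubini then permits interchanging the $t$-integral with the $\Lsp^2$ pairing and with $\So_{ij}$, and the standard semigroup Laplace identity $\int_0^\infty e^{tz}\Pt_t\,\d t = \Go_z$ (a Bochner-integral of $e^{-t\Ho_\fr}$ valid for $\Re(z)<0$ because $\Ho_\fr\geq 0$) yields the claimed right-hand sides. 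The composition $\So_{ij}\Go_z$ makes sense on all of $\Lsp^2(\R^{2n})$ since $\Go_z(\Lsp^2(\R^{2n}))\subset\Hsp^2(\R^{2n})\subset\Dom(\So_{ij})$ by Lemma~\ref{l.So}\ref{l.So.}, and the $\Go_z\So_{ij}^*$ identity follows by taking adjoints (or symmetrically via~\eqref{e.Pt.out.ker}).

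There is no substantive obstacle here: the lemma is a technical repackaging of the heat-semigroup Laplace transform adapted to the unbounded trace-type operators $\So_{ij},\So_{ij}^*$. The only point requiring care is the $t\downarrow 0$ integrability, which is exactly provided by the $Ct^{-1/2}$ bound from part~\ref{l.Ptin.out.}; this is precisely the payoff of the Cauchy--Schwarz split above, since a naive bound $\normo{\So_{ij}\Pt_t}\le\normo{\So_{ij}|_{\Hsp^a}}\normo{\Pt_t}_{\Lsp^2\to\Hsp^a}$ for $a>1$ would give only $t^{-a/2}$, which is not integrable at $0$.
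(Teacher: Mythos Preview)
Your proof is correct and follows essentially the same approach as the paper's: the Fourier formula \eqref{e.S.Four}, the Cauchy--Schwarz split of the Gaussian, and the bound $|\M_{ij}q|^2\geq 2|q_1|^2$ for part~\ref{l.Ptin.out.}, then the $t^{-1/2}$ integrability plus Fubini for part~\ref{l.Ptin.out.Lap}. The only cosmetic difference is that for~\ref{l.Ptin.out.Lap} the paper carries out the Laplace transform directly in Fourier variables (matching the result to~\eqref{e.SG.fourier}), whereas you invoke the abstract identity $\int_0^\infty e^{tz}\Pt_t\,\d t=\Go_z$ and then pass $\So_{ij}$ through; the paper's route sidesteps any question of commuting the unbounded $\So_{ij}$ with a Bochner integral, so if you keep your presentation you should add one line noting that this exchange is justified (e.g., because $\So_{ij}$ is bounded from $\Hsp^2(\R^{2n})$ to $\Lsp^2(\R^{2n-2})$ and the Bochner integral $\int_0^\infty e^{tz}\Pt_t u\,\d t$ also converges in $\Hsp^2$).
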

\begin{proof}
It suffices to consider $ \So_{ij}\Pt_t $ since $ \Pt_t\So^*_{ij}=(\So_{ij}\Pt_t)^* $.

\noindent\ref{l.Ptin.out.}~
Fix $ u\in\Lsp^2(\R^{2n}) $, we use~\eqref{e.S.Four} to bound
\begin{align*}
	\norm{\So_{ij}\Pt_tu}^2
	=
	\int_{\R^{2n-2}} \Big( \int_{\R^{2}}  \hat{\Pt_t u}(\M_{ij}q)  \frac{\d q_1}{2\pi} \Big)^2 \d q_{2-n}
	=
	\int_{\R^{2n-2}} \Big( \int_{\R^{2}} e^{-\frac12 t |\M_{ij}q|^2} \hat{u}(\M_{ij}q) \frac{\d q_1}{2\pi} \Big)^2 \d q_{2-n}.
\end{align*}
On the r.h.s., bound $ |\M_{ij}q|^2 \geq \frac12 |q_1|^2 $ (as checked from~\eqref{e.M}),
and applying the Cauchy--Schwarz inequality in the $ q_1 $ integral.
We conclude the desired result
\begin{align*}
	\norm{\So_{ij}\Pt_tu}^2
	\leq
	C\int_{\R^{2}} \Big( e^{- \frac14 t|q_1|^2} \Big)^2 \d q_1 \ \norm{u}^2
	\leq 
	\frac{C}{t} \norm{u}^2.
\end{align*}

\noindent\ref{l.Ptin.out.Lap}~
Fix $ \Re(z)<0 $, integrate $ \ip{v}{\So_{ij}\Pt_t u} $ against $ e^{zt} $ over $ t\in(0,\infty) $, and use~\eqref{e.S.Four} to get
\begin{align*}
	\int_0^\infty e^{zt} \ip{v}{\So_{ij}\Pt_{t}u} \, \d t 
	=
	\int_0^\infty \int_{\R^{2n}} \bar{\hat{v}(q_{2-n})} e^{tz-\frac{t}2|\M_{ij}q|^2} \hat{u}(\M_{ij}q)(2\pi)^{-1} \, \d t \d q.
\end{align*}
This integral converges absolutely since $ \normo{\So_{ij}\Pt_t} \leq C t^{-1/2} $ and $ \Re(z)<0 $.
This being the case, we swap the integrals and evaluate the integral over $ t $ to get
\begin{align*}
	\int_0^\infty e^{zt}  \ip{v}{\So_{ij}\Pt_{t}u} \, \d t 
	=
	\int_{\R^{2n}} \bar{\hat{v}(q_{2-n})} \frac{1}{\tfrac12|\M_{ij}q|^2-z} \hat{u}(\M_{ij}q) \, \frac{\d q}{2\pi}.
\end{align*}
The last expression matches $ \ip{v}{\So_{ij}\Go_z u} $, as seen from~\eqref{e.SG.fourier}.
\end{proof}

\begin{lemma}\label{l.Ptbi}
\begin{enumerate}[label=(\alph*)]
\item[]
\item \label{l.Ptbi.}
For distinct pairs $ (i<j)\neq (k<\ell) $, $ t\in\R_+ $, $ \Pt_t\So_{k\ell}^*(\Lsp^2(\R^{2n-2})) \subset \Dom(\So_{ij}) $,
so the operator $ \So_{ij}\Pt_t \So_{k\ell}^* $ maps $ \Lsp^2(\R^{2n-2}) \to \Lsp^2(\R^{2n-2}) $.
Further 
\begin{align*}
	\normo{\So_{ij}\Pt_t\So_{k\ell}^*} \leq C t^{-1}.
\end{align*}
\item \label{l.Ptbi.Lap}
For distinct pairs $ (i<j)\neq (k<\ell) $, $ v,w\in\Lsp^2(\R^{2n-2}) $, and $ \Re(z)<0 $,
\begin{align}
	\label{e.Ptbi.Lap}
	\int_{\R_+\times\R^{4n-4}} e^{zt} \bar{w}(y) \hk(t,S_{ij}y-S_{k\ell}y') v(y') \, \d t \d y \d y'
	=
	\ip{ w }{ \So_{ij}\Go_z\So^*_{k\ell} v},
\end{align}
where the integral converges absolutely.
\end{enumerate}
\end{lemma}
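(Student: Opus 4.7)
My plan is to reduce part~\ref{l.Ptbi.} to Lemma~\ref{l.Ptin.out} via a semigroup factorization, and to prove part~\ref{l.Ptbi.Lap} by combining a Fourier calculation analogous to the one underlying Lemma~\ref{l.off.diagonal} with a Fubini argument based on the Dell'Antonio-type estimate~\eqref{e.dell}.

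For part~\ref{l.Ptbi.}, I would split $\Pt_t = \Pt_{t/2}\Pt_{t/2}$. For any $v\in\Lsp^2(\R^{2n-2})$, the Fourier transform of $\Pt_{t/2}\So_{k\ell}^*v$ carries a Gaussian factor $e^{-\frac{t}{4}|p|^2}$, so $\Pt_{t/2}\So_{k\ell}^*v$ lies in $\bigcap_{a>0}\Hsp^a(\R^{2n})$ and hence in $\Dom(\So_{ij})$ by Lemma~\ref{l.So}. This licenses the factorization $\So_{ij}\Pt_t\So_{k\ell}^* = (\So_{ij}\Pt_{t/2})(\Pt_{t/2}\So_{k\ell}^*)$, and composing the two $\leq Ct^{-1/2}$ bounds from Lemma~\ref{l.Ptin.out} yields $\normo{\So_{ij}\Pt_t\So_{k\ell}^*} \leq Ct^{-1}$.

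For part~\ref{l.Ptbi.Lap}, adapting the Fourier-space derivation of~\eqref{e.GbiQ.Four} by replacing the resolvent symbol $(\tfrac12|p|^2-z)^{-1}$ with the heat-semigroup symbol $e^{-\frac{t}{2}|p|^2}$ gives
\begin{align*}
\ip{w}{\So_{ij}\Pt_t\So_{k\ell}^*v} = \int_{\R^{2n}} \bar{\hat{w}(p_i+p_j,\, p_{\bar{ij}})} \, e^{-\frac{t}{2}|p|^2} \, \hat{v}(p_k+p_\ell,\, p_{\bar{k\ell}}) \, \frac{\d p}{(2\pi)^2}.
\end{align*}
Integrating against $e^{zt}$ for $\Re z < 0$ pointwise in $p$ produces the factor $(\tfrac12|p|^2-z)^{-1}$, recovering~\eqref{e.GbiQ.Four} and hence $\ip{w}{\So_{ij}\Go_z\So_{k\ell}^*v}$. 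The swap of the $t$ and $p$ integrals is justified by Fubini, since $\int_{\R^{2n}}\int_0^\infty e^{\Re(z)t-\frac{t}{2}|p|^2}|\hat{w}(p_i+p_j,p_{\bar{ij}})|\,|\hat{v}(p_k+p_\ell,p_{\bar{k\ell}})|\,\d t\,\d p = \int_{\R^{2n}}(\tfrac12|p|^2-\Re z)^{-1}|\hat{w}|\,|\hat{v}|\,\d p$, which is finite by~\eqref{e.dell}.

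The delicate point is absolute convergence of the full iterated integral over $\R_+ \times \R^{4n-4}$, since the operator-norm bound $Ct^{-1}$ from part~\ref{l.Ptbi.} is not integrable near $t=0$. I would circumvent this by replacing $v,w,z$ with $|v|,|w|,\Re(z)$: the integrand becomes non-negative, Tonelli applies unconditionally, and the Fourier--Fubini argument above (now with $|v|,|w|$ in place of $v,w$) identifies the resulting iterated integral with $\ip{|w|}{\So_{ij}\Go_{\Re z}\So_{k\ell}^*|v|}$, which is bounded by $C\|w\|\|v\| < \infty$ via Lemma~\ref{l.off.diagonal}. This majorization justifies Fubini in the original integral, and the identity~\eqref{e.Ptbi.Lap} then follows once the $y,y'$-integral is identified with $\ip{w}{\So_{ij}\Pt_t\So_{k\ell}^*v}$ through the kernel formula~\eqref{e.Pt.med.ker}.
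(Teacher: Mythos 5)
Your proposal is correct, and for part~\ref{l.Ptbi.} you take a genuinely different and arguably cleaner route than the paper. The paper proves part~\ref{l.Ptbi.} by directly invoking the Dell'Antonio-type estimate~\eqref{e.dell}: it bounds $e^{-\frac{t}{2}|p|^2}\leq C(t|p|^2)^{-1}$ and applies~\eqref{e.dell} to the resulting integrand, obtaining both the domain inclusion and the $Ct^{-1}$ norm bound in one stroke (see~\eqref{e.dell.conseq.}--\eqref{e.dell.conseq..}). Your semigroup factorization $\Pt_t=\Pt_{t/2}\Pt_{t/2}$ sidesteps~\eqref{e.dell} entirely: since $\hat{\Pt_{t/2}\So_{k\ell}^*v}(p)=e^{-\frac{t}{4}|p|^2}\tfrac{1}{2\pi}\hat{v}(p_k+p_\ell,p_{\bar{k\ell}})$ carries a Gaussian weight, $\Pt_{t/2}\So_{k\ell}^*v$ lies in every $\Hsp^a(\R^{2n})$, hence in $\Dom(\So_{ij})$ by Lemma~\ref{l.So}, and composing the two $Ct^{-1/2}$ bounds from Lemma~\ref{l.Ptin.out} gives $Ct^{-1}$. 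Notably, your argument works even for $(i<j)=(k<\ell)$; the distinctness is only genuinely needed in part~\ref{l.Ptbi.Lap}, where $\So_{ij}\Go_z\So_{k\ell}^*$ must be bounded. What the paper's approach buys is that the same estimate it uses in part~\ref{l.Ptbi.} is exactly what is reused for the Fubini justification in part~\ref{l.Ptbi.Lap}, making the two halves of the proof tightly coupled; your part~\ref{l.Ptbi.} is more modular, at the cost of still needing to bring in~\eqref{e.dell} (via Lemma~\ref{l.off.diagonal}) for part~\ref{l.Ptbi.Lap}. Your treatment of part~\ref{l.Ptbi.Lap} is essentially the paper's: both replace $(v,w,z)$ by $(|v|,|w|,\Re z)$ to make the integrand nonnegative, apply Tonelli, carry out the Fourier computation with the symbol $e^{-\frac{t}{2}|p|^2}$ in place of $(\tfrac12|p|^2-z)^{-1}$, and swap the $t$- and $p$-integrals justified by~\eqref{e.dell} and $\int_0^\infty e^{\Re(z)t-\frac{t}{2}|p|^2}\d t=(\tfrac12|p|^2-\Re z)^{-1}$; the only cosmetic difference is that you cite Lemma~\ref{l.off.diagonal} for the final finiteness where the paper cites~\eqref{e.dell} directly.
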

\begin{remark}
Unlike in the case for incoming and outgoing operators,
here our bound on $ C t^{-1} $ on the mediating operator
does not ensure the integrability of $ \normo{\So_{ij}\Pt_t\So_{k\ell}^*} $ near $ t=0 $.
Nevertheless, the integral in~\eqref{e.Ptbi.Lap} still converges absolutely.
\end{remark}
\begin{proof}
Fix distinct pairs $ (i<j)\neq (k<\ell) $ and $ v,w\in\Lsp(\R^{2n-2}) $.

\noindent\ref{l.Ptbi.}~
As argued just before Lemma~\ref{l.Ptin.out}, we have $ \Pt_t\So^*_{k\ell}v \in \Lsp^2(\R^{2n}) $.
To check the condition  $ \Pt_t\So^*_{k\ell}v \in \Dom(\So_{ij}) $, consider
\begin{align}
	\label{e.dell.conseq.}
	\int_{\R^{2n}} \Big| \hat{w}(q_{2-n}) 
	e^{-\frac{t}{2}|\M_{ij}q|^2}\hat{\So_{k\ell}^*v}(M_{ij}q) \Big| \, \frac{ \d q}{2\pi}
	=
	\int_{\R^{2n}} \Big| \hat{w}(p_i+p_j,p_{\bar{ij}}) e^{-\frac{t}{2}|p|^2}\hat{\So_{k\ell}^*v}(p) \Big| \, \frac{\d p}{2\pi},
\end{align}
where the equality follows by a change of variable $ q=M_{ij}^{-1}p $,
together with $ (p_i+p_j,p_{\bar{ij}})=[\M_{ij}^{-1}p]_{2-n} $ and $ |\det(M_{ij})|=1 $ (as readily verified from~\eqref{e.M}).
In~\eqref{e.dell.conseq.}, bound $ e^{-\frac{t}{2}|p|^2} \leq C\,(t|p|^2)^{-1} $ and use~\eqref{e.dell} to get
\begin{align}
	\label{e.dell.conseq..}
	\eqref{e.dell.conseq.}
	\leq
	C\, t^{-1} \norm{v} \, \norm{w}.
\end{align}
Referring to the definition~\eqref{e.domSo} of $ \Dom(\So_{ij}) $,
since~\eqref{e.dell.conseq} holds for all $ w\in\Lsp^2(\R^{2n-2}) $,
we conclude $ \Pt_t\So^*_{k\ell}v \in \Dom(\So_{ij}) $
and $ |\ip{ w}{ \So_{ij}\Pt_t\So^*_{k\ell}v }| = |\ip{ \So^*_{ij}w }{ \Pt_t\So^*_{k\ell}v}| \leq C t^{-1} \norm{w}\,\norm{v} $.

\ref{l.Ptin.out.Lap}
To prove~\eqref{e.Ptbi.Lap}, 
assume for a moment $ z=-\lambda \in(-\infty,0) $ is real, and $ v(y),w(y) \geq 0 $ are positive.
In~\eqref{e.Ptbi.Lap}, express the integral over $ y,y' $ as $ \ip{w}{\So_{ij}\Pt_t\So^*_{k\ell}v} = \ip{\So^*_{ij}w}{\Pt_t\So_{k\ell}^*v} $, and use~\eqref{e.S*ijS*kl} to get
\begin{align*}
	\int_{\R_+\times\R^{4n-4}} e^{zt} \bar{w}(y) \hk(t,S_{ij}y-S_{k\ell}y') v(y') \, \d t \d y \d y'
	=
	\int_0^\infty e^{-\lambda t} \Big( \int_{\R^{2n}} \bar{\hat{w}(p_i+p_j,p_{\bar{ij}})} e^{-\frac{t}{2}|p|^2} \hat{v}(p_k+p_\ell,p_{\bar{k\ell}}) \, \d p \Big) \, \d t.
\end{align*}
The integral on the r.h.s.\ converges absolutely over $ \R_+\times\R^{2n} $, i.e., jointly in $ t,p $.
This follows by using~\eqref{e.dell} together with $ \int_0^\infty e^{-\lambda t -\frac{t}{2}|p|^2} \d t = \frac{1}{\lambda+\frac12|p|^2} $.
Given the absolute convergence, we swap the integrals over $ t $ and over $ p $, 
and evaluate the former to get
the expression for $ \ip{w}{\So_{ij}\Go_z\So^*_{k\ell}v} $ on the right hand side of \eqref{e.GbiQ.Four}.
For general $ v(y),w(y) $, the preceding calculation done for $ (v(y),w(y)) \mapsto (|v(y)|,|w(y)|) $ and for $ z\mapsto \Re(z) $
guarantees the relevant integrability.
\end{proof}

Recall $ \jfn(t,\betaC) $ from~\eqref{e.jfn}.
For the diagonal mediating operator, let us first settle some properties of $ \jfn $.
\begin{lemma}\label{l.jfn}
For each $ \Re(z)<-e^{\betaC} $, the Laplace transform of $ \jfn(t,\betaC) $ evaluates to
\begin{align}
	\label{e.jfn.Lap}
	\int_0^\infty e^{zt} \jfn(t,\betaC) \, \d t = \frac{1}{\log(-z) - \betaC},
\end{align}
where the integral converges absolutely, and
$ \jfn(t,\betaC) $ has the following pointwise bound
\begin{align}
	\label{e.jfn.ptwise}
	\jfn(t,\betaC)
	=
	|\jfn(t,\betaC)| \leq C \, t^{-1} \, |\log (t\wedge\tfrac12)|^{-2} e^{(\betaC+1)C t},
	\quad
	t\in\R_+.
\end{align}
\end{lemma}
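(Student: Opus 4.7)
The plan is to handle the two assertions by direct computation: the Laplace identity via Fubini plus a gamma-function identity, and the pointwise bound by splitting the defining integral at $\alpha = 1$.

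For the Laplace transform identity, I would first verify absolute convergence of the double integral $\int_0^\infty \int_0^\infty |e^{zt}| \, t^{\alpha-1} e^{\betaC\alpha}/\Gamma(\alpha) \, d\alpha \, dt$. Using $\int_0^\infty e^{-rt} t^{\alpha-1} \, dt = \Gamma(\alpha)\, r^{-\alpha}$ with $r = -\Re(z) > e^{\betaC}$, this reduces to $\int_0^\infty e^{-\alpha(\log(-\Re(z)) - \betaC)} \, d\alpha < \infty$, finite exactly under the stated hypothesis $\Re(z) < -e^{\betaC}$. Fubini then permits swapping the order of integration in $\int_0^\infty e^{zt} \jfn(t, \betaC) \, dt$, and the same gamma identity with complex $-z$ (principal branch of $\log$, valid since $\Re(-z) > 0$) yields $\int_0^\infty e^{-\alpha(\log(-z) - \betaC)} \, d\alpha = (\log(-z) - \betaC)^{-1}$.

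For the pointwise bound, I would reduce to a single-variable estimate by writing $\jfn(t, \betaC) = t^{-1} G(t e^{\betaC})$, with $G(r) := \int_0^\infty r^\alpha/\Gamma(\alpha) \, d\alpha$, using $t^{\alpha-1} e^{\betaC\alpha} = t^{-1} (te^{\betaC})^\alpha$; positivity of the integrand immediately provides the absolute-value identity. Then I would split the $\alpha$-integral at $\alpha = 1$. On $(0, 1)$, use $1/\Gamma(\alpha) = \alpha/\Gamma(\alpha + 1) \leq C\alpha$ (since $\Gamma$ is bounded below on $[1, 2]$) and bound the piece by $\int_0^\infty \alpha r^\alpha \, d\alpha = (\log(1/r))^{-2}$ for $r < 1$, and crudely by $Cr$ for $r \geq 1$. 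On $[1, \infty)$, invoke a weak Stirling bound $\Gamma(\alpha) \geq c(\alpha/e)^\alpha$ to write the integrand as $\leq C(re/\alpha)^\alpha$ and estimate by saddle point at $\alpha = r$, producing a polynomial multiple of $e^r$. Translating back via $r = te^{\betaC}$: for $t$ small enough that $|\log t|$ dominates $|\betaC|$, the first piece gives $\jfn(t, \betaC) \leq C t^{-1} |\log t|^{-2}$, matching the claim; for $t \geq 1/2$, the second piece gives growth of the form $C e^{Kt}$ with $K$ depending on $\betaC$, which the lemma packages as $e^{(\betaC + 1)Ct}$ by taking the generic constant $C$ large enough.

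The main obstacle is the bookkeeping in the intermediate range of $t$ (roughly where $te^{\betaC}$ or $t$ is of order one), in which neither asymptotic regime cleanly dominates. There the bound is loose, and the bounded constants are absorbed into the exponential factor $e^{(\betaC+1)Ct}$ (which is $\geq 1$) and the factor $|\log(t \wedge \tfrac12)|^{-2}$ (which is a fixed positive constant for $t \geq 1/2$). Apart from this packaging, both parts reduce to direct computations with elementary estimates on $1/\Gamma$.
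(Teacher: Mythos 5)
Your proposal follows the same route as the paper: for \eqref{e.jfn.Lap}, verify absolute integrability via the gamma-function identity at $-\Re(z)$, apply Fubini, and evaluate with $\int_0^\infty e^{zt}t^{\alpha-1}\,\d t=\Gamma(\alpha)(-z)^{-\alpha}$; for \eqref{e.jfn.ptwise}, split the $\alpha$-integral at $\alpha=1$, using $1/\Gamma(\alpha)\le C\alpha$ on $(0,1)$ (yielding the $t^{-1}|\log t|^{-2}$ factor for small $t$) and a Stirling lower bound on $\Gamma$ on $[1,\infty)$ (yielding the exponential factor for large $t$). The substitution $r=te^{\betaC}$ and explicit mention of the intermediate-$t$ regime are cosmetic; the argument is correct and matches the paper's proof.
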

\begin{proof}
To evaluate the Laplace transform, 
assume for a moment that $ z\in(-\infty,-e^{\betaC}) $ is real.
Integrate~\eqref{e.jfn} against $ e^{zt} $ over $ t $.
Under the current assumption that $ z $ is real, the integrand therein is positive,
so we apply Fubini's theorem to swap the $ t $ and $ \alpha $ integrals to get
\begin{align*}
	\int_0^\infty e^{z t} \jfn(t,\betaC) \, \d t 
	=
	\int_0^\infty \frac{e^{\betaC\alpha}}{\Gamma(\alpha)} \Big( \int_0^\infty t^{\alpha-1} e^{-(-z t)} \d t \Big) \d \alpha.
\end{align*}
The integral over $ t $, upon a change of variable $ -zt \mapsto t $, evaluates to $ \Gamma(\alpha) /(-z)^{\alpha} $.
Canceling the $ \Gamma(\alpha) $ factors and evaluating the remaining integral over $ \alpha $ yields \eqref{e.jfn.Lap} for $ z\in(-\infty,-e^{\betaC}) $.
For general $ z\in\C $ with $ \Re(z)<-e^{\betaC} $, since $ |e^{zt}| = e^{\Re(z)t} $, the preceding result guarantees integrability
of $ |e^{-zt+\alpha\betaC} t^{\alpha-1} \Gamma(\alpha)^{-1}| $ over $ (t,\alpha) \in \R^2_+ $.
Hence Fubini's theorem still applies, and~\eqref{e.jfn.Lap} follows.

To show~\eqref{e.jfn.ptwise}, in~\eqref{e.jfn}, we separate the integral (over $ \alpha\in\R_+ $) 
into two integrals over $ \alpha>1 $ and over $ \alpha<1 $,
denoted by $ I_{+} $ and $ I_{-} $, respectively.
For $ I_{+} $, we use the bound $ \exp(-\log \Gamma(\alpha)) \leq \frac{\alpha}2\log\alpha - C\alpha $
(c.f., \cite[6.1.40]{abramowitz65}) to write
$
	I_{+}
	\leq 
	\int_1^\infty \exp( -\alpha(\tfrac12\log\alpha - (C+\betaC) -\log t) ) \d \alpha $.
It is now straightforward to check that $ I_{+} \leq e^{(\betaC+1)Ct} $.
Using $ |\frac{1}{\Gamma(\alpha)}| \leq C\alpha $, $ \alpha\in(0,1) $ (c.f., \cite[6.1.34]{abramowitz65}),
we bound $ I_{-} $ as
$
	I_{-}
	\leq 
	Ct^{-1}e^{\betaC} \int_0^1 \alpha t^{\alpha} \d \alpha$.
For all $ t \geq \frac12 $, the last integral is indeed bounded by $ e^{(\betaC+1)Ct} $.
For $ t < \frac12 $, we write $ t^{\alpha}=e^{-\alpha|\log t|} $ we perform a change of variable $ \alpha|\log t| \to t $ to get
$
	I_{-}
	\leq
	C \, t^{-1} e^{\betaC} |\log t|^{-2} \int_0^{|\log t|} \alpha e^{-\alpha} \d \alpha
	\leq
	C \, t^{-1} e^{\betaC} |\log t|^{-2}$.
Collecting the preceding bounds and adjusting the constant $C$ gives~\eqref{e.jfn.ptwise}.
\end{proof}

Referring to the definition~\eqref{e.Ptj} of $ \Ptj{t} $, we see that 
this operator has an integral kernel
\begin{align}
	\label{e.Ptj.ker}
	\big( \Ptj{t} v \big)(y) 
	= 
	\int_{\R^{2n-2}} \hk^\Jo(t,y,y') v(y') \, \d y',
	\quad
	\hk^\Jo(t,y,y') := \jfn(t,\betaC) \hktwo(\tfrac{t}{2},y_2-y'_2) \prod_{i=3}^n \hktwo(t,y_i-y'_i).
\end{align}
\begin{lemma}\label{l.Ptj}
\begin{enumerate}[label=(\alph*)]
\item[]
\item \label{l.Ptj.}
For each $ t\in\R_+ $, $ \Ptj{t}: \Lsp^2(\R^{2n-2})\to\Lsp^2(\R^{2n-2}) $ is a bounded operator with
\begin{align}
	\normo{\Ptj{t}} \leq C\,(t\wedge\tfrac12)^{-1} \, |\log (t\wedge\tfrac12)|^{-2} e^{(\betaC+1)C t}.
\end{align}
\item \label{l.Ptj.Lap}
Further, for each $ v,w\in\Lsp^2(\R^{2n-2}) $ and $ \Re(z)<-e^{\betaC} $,
\begin{align}
	\label{e.Ptj.Lap}
	\int_{\R_+}  e^{zt} \,\ip{w}{\Pt^\Jo_t v} \,  \d t 
	=	
	\int_{\R_+\times\R^{4n-4}}  e^{zt} \,\bar{w(y)} \hk^\Jo(t,y,y') v(y') \,  \d t \d y \d y'
	=
	\ip{ w }{ (\Jo_z-\betaC\Io)^{-1} v},
\end{align}
where the integrals converge absolutely (over $ \R_+ $ and over $\R_+\times\R^{4n-4} $).
\end{enumerate}
\end{lemma}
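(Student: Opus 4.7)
The plan is to exploit that $\Ptj{t}$ factors as the scalar $\jfn(t,\betaC)$ times a standard heat semigroup on $\Lsp^2(\R^{2n-2})$. \textbf{For part (a)}, note that $e^{\frac{t}4\nabla^2_2+\frac{t}{2}\sum_{i=3}^n\nabla^2_i}$ has Fourier multiplier $e^{-\frac{t}{2}|p|^2_{2-n}}$ of modulus at most one, so it is a contraction on $\Lsp^2(\R^{2n-2})$. Thus $\normo{\Ptj{t}}\leq|\jfn(t,\betaC)|$, and the stated bound follows by applying the pointwise estimate \eqref{e.jfn.ptwise} from Lemma \ref{l.jfn} and using $(t\wedge\tfrac{1}{2})^{-1}\geq t^{-1}$.

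\textbf{For part (b)}, combining $|\ip{w}{\Ptj{t}v}|\leq|\jfn(t,\betaC)|\,\norm{w}\norm{v}$ with the absolute convergence of $\int_0^\infty e^{zt}\jfn(t,\betaC)\,\d t$ (Lemma \ref{l.jfn}, valid since $\Re(z)<-e^{\betaC}$) yields absolute convergence of the first integral in \eqref{e.Ptj.Lap}. The first equality follows by substituting the kernel representation \eqref{e.Ptj.ker} into $\ip{w}{\Ptj{t}v}$ and applying Fubini--Tonelli; its hypotheses are verified by the same scalar bound together with Cauchy--Schwarz on the heat-kernel factors.

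For the second equality I would pass to the Fourier side, where the heat semigroup acts as multiplication by $e^{-\frac{t}{2}|p|^2_{2-n}}$. By Plancherel,
\begin{equation*}
	\ip{w}{\Ptj{t}v}
	=
	\jfn(t,\betaC)\int_{\R^{2n-2}}\bar{\hat{w}(p_{2-n})}\,e^{-\frac{t}{2}|p|^2_{2-n}}\,\hat{v}(p_{2-n})\,\d p_{2-n}.
\end{equation*}
Multiplying by $e^{zt}$ and swapping the $t$- and $p_{2-n}$-integrals via Fubini--Tonelli, the $t$-integral (for each fixed $p_{2-n}$) is evaluated by \eqref{e.jfn.Lap} with $z$ replaced by $z-\frac{1}{2}|p|^2_{2-n}$, which remains inside the domain of Lemma \ref{l.jfn} since $\Re(-\frac{1}{2}|p|^2_{2-n})\leq 0$. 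This yields the Fourier multiplier $[\log(\frac{1}{2}|p|^2_{2-n}-z)-\betaC]^{-1}$, which by \eqref{e.Jo} is precisely the symbol of $(\Jo_z-\betaC\Io)^{-1}$, giving the identification. There is no substantial obstacle: the entire argument is a bookkeeping exercise in Fubini--Tonelli and Plancherel, with the one delicate point being the uniform-in-$p_{2-n}$ control of the inner Laplace transform required to justify the swap, which follows at once from the scalar absolute-convergence estimate used above combined with $e^{-\frac{t}{2}|p|^2_{2-n}}\leq 1$ and Cauchy--Schwarz applied to $|\hat{w}\cdot\hat{v}|$.
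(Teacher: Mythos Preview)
Your proposal is correct and follows essentially the same approach as the paper: part~(a) via the contraction property of the heat semigroup combined with the pointwise bound \eqref{e.jfn.ptwise}, and part~(b) via Plancherel, Fubini--Tonelli, and the scalar Laplace transform identity \eqref{e.jfn.Lap} applied with $z$ shifted by $-\tfrac12|p|^2_{2-n}$. If anything, you are slightly more explicit about the Fubini justifications than the paper, which simply points back to the argument in Lemma~\ref{l.Ptbi}.
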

\begin{proof}
Part~\ref{l.Ptj.} follows from~\eqref{e.jfn.ptwise} and the fact that 
heat semigroups have unit norm, i.e., $ \normo{e^{-at\nabla_i^2}} = 1 $, $ a\geq 0 $.
For part~\ref{l.Ptj.Lap}, we work in Fourier domain and write
\begin{align*}
	\int_{\R^{4n-4}} \bar{w(y)} \hk^\Jo(t,y,y') v(y') \,  \d t \d y \d y'
	=
	\jfn(t,\betaC)\int_{\R^{2n-2}}  \bar{\hat{w}(p)} e^{-\frac12 t|p|^2_{2-n}} \hat{v}(p)  \d p,
\end{align*}
where, recall that $ |p|^2_{2-n}=\frac12|p_2|^2 + |p_3|^2 +\ldots +|p_n|^2 $.
Integrate both sides against $ e^{zt} $ over $ t\in\R_+ $, and exchange the integrals over $ p $ and over $ t $. 
The swap of integrals are justified the same way as in the proof of Lemma~\ref{l.Ptbi}, so we do not repeat it here.
We now have
\begin{align*}
	\int_{\R_+\times\R^{4n-4}} e^{zt}\bar{w(y)} \hk^\Jo(t,y,y') v(y') \,  \d t \d y \d y'
	=
	\int_{\R^{2n-2}}  \Big(\int_0^\infty e^{zt-\frac12 t|p|^2_{2-n}} \jfn(t,\betaC) \d t\Big) \bar{\hat{w}(p)} \hat{v}(p)  \d p.
\end{align*}
Applying~\eqref{e.jfn.Lap}  to evaluate the integral over $ t $ yields
the expression in~\eqref{e.Jo} for $ \ip{w}{(\Jo_z-\betaC\Io)^{-1}v} $.
\end{proof}

\subsection{An identity for the semigroup property}
\label{sect.semigroup}

Our goal is to prove Lemma~\ref{l.semigroup} in the following.
Key to the proof is the  identity \eqref{e.the.identity}.  It depends on a cute fact about the $\Gamma$ function.
Set 
\begin{align}
	\label{e.pk}
	p_{k}(\alpha):=  \frac{\Gamma(\alpha+k+1)}{\Gamma(\alpha+1)} = (\alpha+k)\cdots(\alpha+1)\alpha,
	\quad
	\alpha \geq 0.
\end{align}
with the convention $ p_{-1} := 1 $.

\begin{lemma}
\label{l.Gamma}
For $ m\in\Z_{\geq 0} $,
\begin{align*}
	p_m(\alpha)
	=
	\int_{0}^\alpha
	\sum_{k=0}^{m}
	\binom{m+1}{m-k+1} (m-k) ! \,
	p_{k-1}(\alpha_1) \, \d \alpha_1.
\end{align*}
\end{lemma}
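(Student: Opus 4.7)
My plan is to reduce the claimed integral identity to a polynomial identity for the derivative in $\alpha$, and then to prove the latter via a generating function. Since $p_m(\alpha) = \alpha(\alpha+1)\cdots(\alpha+m)$ for $m \geq 0$, we have $p_m(0) = 0$, and the right-hand side of the asserted identity also vanishes at $\alpha = 0$ (being an integral from $0$ to $0$). By the fundamental theorem of calculus, it therefore suffices to verify the polynomial identity
\[
p_m'(\alpha) \;=\; \sum_{k=0}^{m} \binom{m+1}{m-k+1}(m-k)!\, p_{k-1}(\alpha).
\]

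To prove this, I would introduce the generating function
\[
P(x,\alpha) \;:=\; \sum_{m=-1}^{\infty} \frac{p_m(\alpha)}{(m+1)!}\, x^{m+1},
\]
viewed as a formal power series in $x$ (or, for $|x|<1$, as a convergent series). Observing that $p_m(\alpha)/(m+1)! = \binom{\alpha+m}{m+1}$, the standard binomial series identifies $P(x,\alpha) = (1-x)^{-\alpha}$. Differentiating in $\alpha$ yields
\[
\partial_\alpha P(x,\alpha) \;=\; -\log(1-x)\cdot (1-x)^{-\alpha} \;=\; \Bigl(\sum_{k=1}^{\infty} \tfrac{x^k}{k}\Bigr) P(x,\alpha).
\]
Taking the Cauchy product on the right and matching the coefficient of $x^{m+1}$ on the two sides (and reindexing the sum by $k \mapsto m-k+1$) gives
\[
\frac{p_m'(\alpha)}{(m+1)!} \;=\; \sum_{k=0}^{m} \frac{1}{(m-k+1)\, k!}\, p_{k-1}(\alpha).
\]
Multiplying by $(m+1)!$ and simplifying the factorials via
\[
\binom{m+1}{m-k+1}(m-k)! \;=\; \frac{(m+1)!}{(m-k+1)!\,k!}(m-k)! \;=\; \frac{(m+1)!}{(m-k+1)\, k!}
\]
then recovers the stated identity exactly.

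I do not foresee any substantive obstacle: the argument hinges only on recognizing the binomial series for $(1-x)^{-\alpha}$, and the sole place requiring care is the coefficient bookkeeping in the reindexing step, which is elementary. Should a more pedestrian route be preferred, the derivative identity can alternatively be established by induction on $m$ using the recurrence $p_m(\alpha) = (\alpha+m)\, p_{m-1}(\alpha)$ and Pascal-type manipulations of the binomial coefficients, though this is less transparent than the generating-function derivation.
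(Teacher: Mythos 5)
Your proof is correct, and it takes a genuinely different route from the paper's. The paper argues directly on the polynomial $p_m'(\alpha) = \sum_{j=0}^m \prod_{i \neq j}(\alpha+i)$, using the telescoping trick $(\alpha+m) = (\alpha+j) + (m-j)$ to peel off one factor at a time and re-express each $\prod_{i\neq j}^{m}(\alpha+i)$ in terms of $p_{m-1}, p_{m-2}, \ldots$; it then evaluates the resulting multiplicity count via the combinatorial identity $\sum_{j=0}^m \prod_{i=0}^{\ell-1}(j-i)_+ = \binom{m+1}{\ell+1}\ell!$. You instead recognize $p_m(\alpha)/(m+1)! = \binom{\alpha+m}{m+1}$, package everything into the binomial series $(1-x)^{-\alpha}$, and read off the identity by differentiating in $\alpha$ and matching Cauchy-product coefficients. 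Both reduce to the same derivative identity after the fundamental theorem of calculus. Your generating-function derivation is arguably shorter and hides the combinatorics in the standard expansion $-\log(1-x) = \sum_{k\geq 1} x^k/k$; the paper's is more elementary in the sense of not invoking formal power series, at the cost of a multi-line inductive bookkeeping argument. One small remark: the reindexing $k \mapsto m-k+1$ you mention is not actually needed — the Cauchy-product coefficient $\sum_{n=0}^m \frac{p_{n-1}(\alpha)}{(m+1-n)\,n!}$ already has the desired form upon renaming the dummy index — but this is harmless.
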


\begin{proof}
Taking derivative gives $ \frac{\d~}{\d \alpha} p_m(\alpha) = \sum_{j=0}^m \prod_{j^\mathrm{c}}^m(\alpha+i) $,
where $ \prod_{j^\mathrm{c}}^m $ denotes a product over $ i\in\{0,\ldots,m\}\setminus\{j\} $.
Our goal is to express this derivative in terms of $ p_{m-1}(\alpha), p_{m-2}(\alpha),\ldots $.
The $ j=m $ term skips the $ (\alpha+m) $ factor, and is hence exactly $ p_{m-1}(\alpha) $.
For other values of $ j $, we use $ (\alpha-m) $ to compensate the missing $ (\alpha+j) $ factor.
Namely, writing $ (\alpha+m) = (\alpha+j+(m-j)) $, we have
\begin{align}
	\label{e.mtom-1}
	\prod\nolimits^{m}_{j^\mathrm{c}}(\alpha+i) = p_{m-1}(\alpha) + (m-j) \prod\nolimits^{m-1}_{j^\mathrm{c}} (\alpha+i). 
\end{align}
This gives
\begin{align*}
	\frac{\d~}{\d \alpha} p_m(\alpha)
	=
	\sum_{j=0}^m \prod\nolimits_{j^\mathrm{c}}^m(\alpha+i) 
	=
	\sum_{j=0}^m p_{m-1}(\alpha) 
	+ 
	\sum_{j=0}^m (m-j) \prod\nolimits^{m-1}_{j^\mathrm{c}} (\alpha+i). 
\end{align*}
In~\eqref{e.mtom-1}, we have reduced $ \prod_{j^\mathrm{c}}^m(\alpha+i) $ to $ \prod_{j^\mathrm{c}}^{m-1}(\alpha+i) $,
i.e., the same expression but with $ m $ decreased by $ 1 $.
Repeating this procedure yields
\begin{align}
\label{e.p'm}
\begin{split}
	\frac{\d~}{\d \alpha} p_m(\alpha)
	&=
	\sum_{\ell=1}^m
	p_{m-\ell}(\alpha) \Big( \sum_{j=0}^m (m-j)_+(m-j-1)_+\cdots (m-j-\ell)_+ \Big)
\\
	&=
	\sum_{\ell=1}^m
	p_{m-\ell-1}(\alpha) \sum_{j=0}^m \prod_{i=0}^{\ell-1} (j-i)_+
	=
	\sum_{\ell=1}^m p_{m-\ell-1}(\alpha)\binom{m+1}{\ell+1} \ell ! \, ,
\end{split}
\end{align}
where $ \prod_{i\in\varnothing}(\Cdot) := 1 $.
Within the last equality, we have used the identity
$
	\sum_{j=0}^m \prod_{i=0}^{\ell-1} (j-i)_+ = \binom{m+1}{\ell+1} \ell !.
$
In~\eqref{e.p'm}, perform a change of variable $ m-\ell := k $, and integrate in $ \alpha $, using $ p_m(0)=0 $ to get the result.
\end{proof}

\begin{lemma}
For $ s<t \in \R_+ $, $ i<j $, we have
\begin{align}
	\label{e.the.identity}
	\jfn(t,\betaC) = \int_{0<t_1<s} \int_{s<t_2<t} \jfn(t_1,\betaC) (t_2-t_1)^{-1} \jfn(t-t_2,\betaC) \, \d t_1 \d t_2.
\end{align}
\end{lemma}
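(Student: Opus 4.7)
My approach is to show first that the right-hand side is independent of $s\in(0,t)$, then evaluate it in the limit $s\to 0^+$ to identify the constant value with $\jfn(t,\betaC)$. Because the kernel $(t_2-t_1)^{-1}$ is only logarithmically integrable near the diagonal corner $t_1=t_2=s$, a naive derivative in $s$ produces divergent boundary integrals, so I would regularize by restricting to $t_2-t_1>\eta$; denote the resulting cut-off right-hand side by $F_\eta(t,s)$. Differentiating in $s$ yields
\begin{equation*}
\partial_s F_\eta(t,s) = \jfn(s,\betaC)\,K(t-s,\eta) - \jfn(t-s,\betaC)\,K(s,\eta),
\qquad
K(\tau,\eta):=\int_\eta^\tau u^{-1}\jfn(\tau-u,\betaC)\,\d u,
\end{equation*}
so $s$-independence reduces to showing that $K(\tau,\eta)/\jfn(\tau,\betaC)$ is, up to $o(1)$ terms as $\eta\to 0$, independent of $\tau$.

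Next, I would compute the asymptotics of $K$ by substituting the representation~\eqref{e.jfn}, rescaling $u=\tau v$, and invoking the classical expansion $\int_\delta^1 v^{-1}(1-v)^{\alpha-1}\,\d v=-\log\delta-\psi(\alpha)-\cEM+o(1)$, where $\psi$ is the digamma function. This gives
\begin{equation*}
K(\tau,\eta) = -\log\eta\cdot\jfn(\tau,\betaC)+(\log\tau-\cEM)\jfn(\tau,\betaC) - \tilde{\jfn}(\tau) + o(1),
\quad
\tilde{\jfn}(\tau):=\int_0^\infty\frac{\tau^{\alpha-1}e^{\betaC\alpha}\psi(\alpha)}{\Gamma(\alpha)}\,\d\alpha.
\end{equation*}
The crux of the proof is the identity $\tilde{\jfn}(\tau)=(\log\tau+\betaC)\jfn(\tau,\betaC)$, which I would obtain by integration by parts in $\alpha$ using $\psi(\alpha)/\Gamma(\alpha)=-\tfrac{\d}{\d\alpha}[1/\Gamma(\alpha)]$ together with $\tfrac{\d}{\d\alpha}[\tau^{\alpha-1}e^{\betaC\alpha}]=(\log\tau+\betaC)\tau^{\alpha-1}e^{\betaC\alpha}$; Lemma~\ref{l.Gamma} enters here by supplying the combinatorial control over the rising factorials $p_k(\alpha)$ that is needed to justify the integration by parts and to handle boundary contributions at $\alpha\to 0$ and $\alpha\to\infty$ via Stirling-type bounds. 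Once established, the $\tau$-dependent pieces of $K(\tau,\eta)/\jfn(\tau,\betaC)$ cancel, so $\partial_s F_\eta(t,s)\to 0$ as $\eta\to 0$, and hence the limit $F(t,s):=\lim_{\eta\to 0}F_\eta(t,s)$, which agrees with the claimed right-hand side by dominated convergence, is $s$-independent.

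Finally, to identify the constant value, I would send $s\to 0^+$. Using~\eqref{e.jfn.ptwise}, $\int_0^s\jfn(t_1,\betaC)\,\d t_1\sim |\log s|^{-1}$ while the inner integral obeys $\int_s^t t_2^{-1}\jfn(t-t_2,\betaC)\,\d t_2\sim |\log s|\,\jfn(t,\betaC)$ to leading order. The two logarithms cancel, giving limit $\jfn(t,\betaC)$, and the correction from replacing $(t_2-t_1)^{-1}$ by $t_2^{-1}$ vanishes thanks to $\int_0^s t_1\jfn(t_1,\betaC)\,\d t_1=o(|\log s|^{-1})$. The main obstacle throughout is the careful bookkeeping of the logarithmic divergences: the $-\log\eta\cdot\jfn(\tau,\betaC)$ piece and the $\tilde{\jfn}(\tau)/\jfn(\tau,\betaC)$ piece each carry nontrivial $\tau$-dependence, and only after the identity $\tilde{\jfn}(\tau)=(\log\tau+\betaC)\jfn(\tau,\betaC)$ do they combine into a $\tau$-independent leading order; tracking the uniformity of the $o(1)$ error in $\tau$ on compact subintervals of $(0,t)$ is also needed to commute the $\eta\to 0$ limit with $\partial_s$.
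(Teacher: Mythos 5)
Your approach is genuinely different from the paper's, and the central mechanism you propose is sound. The paper shows that $\int_0^t F(s,t)s^m\,\d s=\jfn(t,\betaC)(m+1)^{-1}t^{m+1}$ for every $m\in\Z_{\geq 0}$, then passes to Laplace transforms in $t$ and reduces the resulting integral identity to the purely combinatorial Lemma~\ref{l.Gamma} about the polynomials $p_m(\alpha)=\Gamma(\alpha+m+1)/\Gamma(\alpha+1)$. You instead differentiate in $s$, reduce $s$-independence to the asymptotics of $K(\tau,\eta)$, and crucially exploit the identity $\tilde{\jfn}(\tau)=(\log\tau+\betaC)\jfn(\tau,\betaC)$ obtained by integration by parts using $\psi(\alpha)/\Gamma(\alpha)=-\tfrac{\d}{\d\alpha}[1/\Gamma(\alpha)]$, plus the standard expansion $\int_\delta^1 v^{-1}(1-v)^{\alpha-1}\,\d v=-\log\delta-\psi(\alpha)-\cEM+o(1)$. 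That integration by parts and that digamma expansion are both correct, and the cancellation of the $\log\tau$ terms that you highlight does make $K(\tau,\eta)/\jfn(\tau,\betaC)$ a $\tau$-independent constant $-\log\eta-\cEM-\betaC$ to leading order. This is a genuinely analytic proof where the paper's is algebraic; it trades the combinatorics of Lemma~\ref{l.Gamma} for classical special-function identities.

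One correction: Lemma~\ref{l.Gamma} does not play the role you assign it. It is a derivative-recursion identity for $p_m(\alpha)$ that the paper uses to match the two sides of the Laplace-transformed moment identity; it says nothing about Stirling bounds or boundary behavior of $1/\Gamma$ at $0$ or $\infty$. Your integration by parts needs only that $1/\Gamma(\alpha)\to 0$ at both endpoints (trivial at $0$, Stirling at $\infty$), which is elementary and independent of Lemma~\ref{l.Gamma}. So your proof, if carried through, would not actually use Lemma~\ref{l.Gamma} at all. The other place you should tighten is the passage from ``$\partial_s F_\eta(t,s)\to 0$ pointwise'' to ``$F(t,\,\cdot\,)$ is constant'': since $F_\eta\to F$ but the convergence of the derivatives is pointwise, you need to integrate $\partial_s F_\eta$ over an interval $[s_1,s_2]$ and dominate, using that the error $R(\tau,\eta)/\jfn(\tau,\betaC)=O(\eta/\tau)$ is uniform for $\tau$ in a compact subinterval of $(0,t)$. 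Likewise, the $s\to 0^+$ step replacing $(t_2-t_1)^{-1}$ by $t_2^{-1}$ produces an error with a logarithmic singularity in $t_1$ near $s$; your bound $\int_0^s t_1\jfn(t_1,\betaC)\,\d t_1=o(|\log s|^{-1})$ is the right order of magnitude but the integral over $t_2$ must be estimated before concluding. These are genuine but fillable gaps, consistent with your own caveats at the end.
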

\begin{proof}
Write $ \jfn(t,\betaC)=\jfn(t) $ to simplify notation.
Let the r.h.s.\ of~\eqref{e.the.identity} be denoted by $ F(s,t) $.
It is standard to check that $ F(s,t) $ is continuous on $ 0 < s < t<\infty $.
Hence it suffices to show
\begin{align}
	\label{e.id.goal.}
	\int_0^t F(s,t) s^{m} \, \d s = \jfn(t) \int_0^t s^{m} \, \d s = \jfn(t) \, (m+1)^{-1}t^{m+1},
	\quad
	m \in \Z_{\geq 0}.
\end{align}
From~\eqref{e.jfn.ptwise}, it is readily checked both sides of~\eqref{e.id.goal.}
grow at most exponentially  in $ t$.
Taking Laplace transform on both sides of~\eqref{e.jfn.ptwise}, the problem is further reduced to showing, for some $ C(m,\betaC)<\infty $,
\begin{align}
	\label{e.id.goal}
	\int_0^\infty \int_0^t e^{-\lambda t} F(s,t) s^{k} \, \d t \d s 
	= 
	\int_0^\infty e^{-\lambda t}\jfn(t) \, (m+1)^{-1}t^{m+1} \d t,
	\quad
	\lambda > C(m,\betaC).
\end{align}
The left hand side can be computed
\begin{align}
	\label{e.id.lhs}
	\text{l.h.s.\ of \eqref{e.id.goal}}
	=
	\int_0^\infty \frac{e^{\betaC\alpha} \lambda^{-\alpha-m-1}}{m+1}
	\Big(
		\int_0^\alpha
		\sum_{k=0}^m \binom{m+1}{m-k+1} (m-k)! \, p_{k-1}(\alpha_1) \d \alpha_1
	\Big)
	\d\alpha.
\end{align}
The integral~\eqref{e.id.lhs} is indeed finite for large enough $ \lambda \geq C(\betaC,m) $.
The right hand side is given by
\begin{align}
	\label{e.id.rhs}
	\text{r.h.s.\ of \eqref{e.id.goal}}
	=
	\int_0^\infty \frac{e^{\betaC\alpha} \lambda^{-\alpha-m-1}}{m+1}  p_m(\alpha) \, \d \alpha.
\end{align}
By Lemma~\ref{l.Gamma} the two coincide.
\end{proof}

\begin{lemma}
\label{l.semigroup}
For $ t'<s<t \in \R_+ $, $ i<j $, we have
\begin{align}
	\label{e.semigroup}
	\int_{t'<t_1<s} \int_{s<t_2<t} (4\pi \Pt^\Jo_{t_1-t'}) \So_{ij} \Pt_{t_2-t_1} \So^*_{ij} (4\pi\Pt^\Jo_{t-\tau_2}) \, \d t_1 \d t_2
	=
	\Pt^\Jo_{t-t'}.
\end{align}
\end{lemma}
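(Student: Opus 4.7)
\textbf{The plan} is to reduce the operator identity \eqref{e.semigroup} to the scalar convolution identity \eqref{e.the.identity} by first obtaining a closed-form expression for the diagonal mediating operator $\So_{ij}\Pt_t\So^*_{ij}$, and then exploiting the fact that every operator appearing in the integrand is a scalar multiple of a single commuting semigroup, namely the one appearing in the definition \eqref{e.Ptj} of $\Pt^\Jo_t$.

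\textbf{Step 1: closed form for $\So_{ij}\Pt_t\So^*_{ij}$.} Following the Fourier route of Lemma~\ref{l.Gobi.bilinear}, I would combine \eqref{e.S.Four} and \eqref{e.So*.Four}: the Fourier transform of $\So_{ij}\Pt_t\So^*_{ij} v$ at $q_{2-n}$ equals
\begin{align*}
\frac{\hat{v}(q_{2-n})}{(2\pi)^2}\int_{\R^2} e^{-\frac{t}{2}|\M_{ij}q|^2}\,\d q_1.
\end{align*}
The key algebraic identity (readily checked from \eqref{e.M}) is $\tfrac12|\M_{ij}q|^2 = |q_1|^2 + \tfrac12|q|^2_{2-n}$, so the Gaussian $q_1$-integral collapses via $\int_{\R^2}e^{-t|q_1|^2}\d q_1=\pi/t$, leaving the Fourier symbol $(c/t)\,e^{-\frac{t}{2}|q|^2_{2-n}}$ for an explicit numerical constant $c$. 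Recognizing $e^{-\frac{t}{2}|q|^2_{2-n}}$ as the symbol of $e^{\frac{t}{4}\nabla_2^2+\frac{t}{2}\sum_{i=3}^{n}\nabla_i^2}$ (the semigroup factor in \eqref{e.Ptj}) yields the clean closed form
\begin{align*}
\So_{ij}\Pt_t\So^*_{ij} \;=\; \frac{c}{t}\,e^{\frac{t}{4}\nabla_2^2+\frac{t}{2}\sum_{i=3}^{n}\nabla_i^2}.
\end{align*}
A parallel direct-kernel calculation, using \eqref{e.Pt.med.ker} with $(k<\ell)=(i<j)$ together with the elementary identity $\hktwo(t,x)^2=\frac{1}{4\pi t}\hktwo(t/2,x)$, delivers the same formula and serves as a useful cross-check.

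\textbf{Step 2: scalar reduction and the identity \eqref{e.the.identity}.} Substituting the Step~1 formula together with $\Pt^\Jo_\tau = \jfn(\tau,\betaC)\,e^{\frac{\tau}{4}\nabla_2^2+\frac{\tau}{2}\sum_{i=3}^{n}\nabla_i^2}$ into the integrand of \eqref{e.semigroup}, the three operator factors all commute and compose by the heat semigroup property, producing the single operator $e^{\frac{(t-t')}{4}\nabla_2^2+\frac{(t-t')}{2}\sum_{i=3}^{n}\nabla_i^2}$, which no longer depends on $(t_1,t_2)$. What remains is a scalar double integral:
\begin{align*}
c'\cdot e^{\frac{(t-t')}{4}\nabla_2^2+\frac{(t-t')}{2}\sum_{i=3}^{n}\nabla_i^2}\int_{t'<t_1<s}\int_{s<t_2<t}\frac{\jfn(t_1-t',\betaC)\,\jfn(t-t_2,\betaC)}{t_2-t_1}\,\d t_1\,\d t_2,
\end{align*}
where $c'$ collects all numerical prefactors. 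A translation $t_1,t_2,s\mapsto t_1-t',t_2-t',s-t'$ matches the double integral to \eqref{e.the.identity} with $(t,s)$ there replaced by $(t-t',s-t')$, so it evaluates to $\jfn(t-t',\betaC)$; multiplying by the operator factor reproduces $\Pt^\Jo_{t-t'}$.

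\textbf{Convergence and the main technical point.} The pointwise bound \eqref{e.jfn.ptwise} on $\jfn$, together with the contractivity $\normo{e^{\frac{\tau}{4}\nabla_2^2+\frac{\tau}{2}\sum_{i=3}^{n}\nabla_i^2}}\leq 1$, makes the integrand locally integrable in operator norm away from the corner $(t_1,t_2)=(s,s)$; near that corner the remaining singularity is $1/(t_2-t_1)$, which is integrable in polar coordinates $(s-t_1,t_2-s)=(r\cos\theta,r\sin\theta)$, as the angular integral is finite and the radial integrand is $O(1)$. The only genuinely delicate point is carefully tracking the numerical constants through the Fourier normalization and the change of variables in Step~1, so that Steps~2--3 combine to produce exactly $\Pt^\Jo_{t-t'}$ on the nose; once Step~1 is in hand, the heat semigroup property and the nontrivial scalar identity \eqref{e.the.identity} do the rest automatically.
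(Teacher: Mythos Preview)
Your proposal is correct and follows essentially the same route as the paper's own proof: the paper computes $\So_{ij}\Pt_\tau\So^*_{ij}=\frac{1}{4\pi\tau}\,e^{\frac{\tau}{4}\nabla_2^2+\frac{\tau}{2}\sum_{i\geq 3}\nabla_i^2}$ via the direct-kernel identity $\hktwo(\tau,y)^2=\frac{1}{4\pi\tau}\hktwo(\tau/2,y)$ (your ``cross-check'' calculation), substitutes this together with the definition of $\Pt^\Jo_\tau$ so the three semigroup factors compose, and then invokes the scalar identity \eqref{e.the.identity}. Your additional Fourier derivation in Step~1 and the explicit integrability check near the corner $(t_1,t_2)=(s,s)$ are not in the paper but are harmless elaborations; the constant you flag as delicate is exactly $c=\frac{1}{4\pi}$, so that $c'=(4\pi)^2\cdot\frac{1}{4\pi}=4\pi$.
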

\begin{remark}
The integral~\eqref{e.semigroup} converges absolutely in operator norm. 
This is seem by writing $ \So_{ij} \Pt_{t_2-t_1} \So^*_{ij} = (\So_{ij} \Pt_{s-t_1})(\Pt_{t_2-s} \So^*_{ij}) $,
and by using the bounds from Lemmas~\ref{l.Ptin.out}\ref{l.Ptin.out.} and~\ref{l.Ptj}\ref{l.Ptj.}.
\end{remark}
\begin{proof}
For $ \tau >0 $, the operator $ \So_{ij}\Pt_\tau\So^*_{ij} $ has an integral kernel 
$	
	\hk(\tau,S_{ij}(y-y')) = (\hktwo(\tau,y_2-y_2))^2 \prod_{i=3}^n \hktwo(\tau,y_i-y_i),
$
where $ \hktwo $ denotes the two-dimensional heat kernel.
From this and $ (\hktwo(\tau,y))^2 = \frac{1}{4\pi\tau} \hktwo(\frac{\tau}{2},y) $, we have
$ \So_{ij} \Pt_{\tau} \So^*_{ij} = \frac{1}{4\pi\tau} \exp(-\frac{\tau}{4}\nabla_2^2 -\frac{\tau}{2}\sum_{i=3}^n\nabla^2_i ) $.
Recall that $ \Pt^\Jo_\tau := \jfn(\tau,\betaC)\exp(-\frac{\tau}{4}\nabla_2^2 -\frac{\tau}{2}\sum_{i=3}^n\nabla^2_i ) $.
We obtain
\begin{align*}
	\text{l.h.s.\ of }\eqref{e.semigroup}
	=
	4\pi e^{-\frac{t-t'}{4}\nabla_2^2 -\frac{t-t'}{2}\sum_{i=3}^n\nabla^2_i }
	\int_{t'<t_1<s} \int_{s<t_2<t} \jfn(t_1-t') (t_2-t_1)^{-1} \jfn(t-t_2) \, \d t_1 \d t_2.
\end{align*}
The desired result now follows from~\eqref{e.the.identity}.
\end{proof}

\subsection{Proof of Theorem~\ref{t.main}}

We begin with a quantitative bound on $ \Dio^{\Vec{(i,j)}}_t $.
\begin{lemma}
For $ \Vec{(i,j)}=((i_k,j_k))_{k=1}^m \in \diag(n,m) $, $ t \in\R_+ $ and $ \lambda \geq 2 $, we have
\begin{align}
	\label{e.dio.bd}
	\Normo{ \Dio_{t}^{\Vec{(i,j)}} }
	\leq
	C 
	(\log(\tfrac{t}{2m+1}\wedge\tfrac12))^{-1}
	m^2 e^{\lambda C\,(\betaC+1) t}  (C/\log\lambda)^{m-1}.
\end{align}
\end{lemma}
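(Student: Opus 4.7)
My plan is to bound $\Normo{\Dio_t^{\Vec{(i,j)}}}$ directly from the integral representation in~\eqref{e.diagram.op}, by passing operator norms under the integral. Since $\Pt^\Jo_\tau$ is $\jfn(\tau,\betaC)$ times a contraction heat semigroup, Lemma~\ref{l.Ptj}\ref{l.Ptj.} gives $\Normo{\Pt^\Jo_\tau}=\jfn(\tau,\betaC)$; combined with Lemmas~\ref{l.Ptin.out}\ref{l.Ptin.out.} and~\ref{l.Ptbi}\ref{l.Ptbi.}, this yields
\begin{align*}
\Normo{\Dio_t^{\Vec{(i,j)}}}\leq (4\pi C)^{2m+1}\int_{\Sigma_m(t)}\tau_0^{-1/2}\tau_m^{-1/2}\prod_{k=1}^{m-1}\tau_k^{-1}\prod_{k=1}^m \jfn(\tau_{k-1/2},\betaC)\,\d\vec\tau.
\end{align*}
I would then apply~\eqref{e.jfn.ptwise} to split $\jfn(\tau,\betaC)$ into a logarithmically controlled piece $\tilde\jfn(\tau)$ times $e^{(\betaC+1)C\tau}$, collect these exponentials into a single $e^{(\betaC+1)Ct}$ using $\sum_a\tau_a=t$, and insert the trivial identity $1=e^{\lambda C(\betaC+1)t}\prod_a e^{-\lambda C(\betaC+1)\tau_a}$ to produce the $e^{\lambda C(\betaC+1)t}$ prefactor while leaving Laplace-type weights $e^{-\lambda'\tau_a}$ with $\lambda'\sim\lambda$ inside the simplex integral.

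The main obstacle is that the mediating factor $\tau_k^{-1}$ is not Laplace-integrable at $\tau_k=0$, so a naive scalar product-of-Laplace-transforms diverges. I plan to resolve this at the operator level: pair each mediating variable $\tau_k$ with an adjacent $\Pt^\Jo$ variable, and bound the Laplace transform of the two-variable paired operator using Lemmas~\ref{l.Ptj}\ref{l.Ptj.Lap} and~\ref{l.Ptbi}\ref{l.Ptbi.Lap}, which identifies it with $(\Jo_{-\lambda}-\betaC\Io)^{-1}\So^*_{i_k j_k}\Go_{-\lambda}\So_{i_{k+1}j_{k+1}}$. Its operator norm is bounded by $C/(\log\lambda-\betaC)$ thanks to the uniform boundedness of the off-diagonal resolvent mediating operator from Lemma~\ref{l.off.diagonal.bdd} and the $1/(\log\lambda-\betaC)$-size of $(\Jo_{-\lambda}-\betaC\Io)^{-1}$ read off from the Fourier description~\eqref{e.Jo}. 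Pairing all $m-1$ mediating variables with $m-1$ of the $m$ $\Pt^\Jo$ factors in this way produces the $(C/\log\lambda)^{m-1}$ factor in the claim, and leaves exactly one $\Pt^\Jo$ unpaired.

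To convert from Laplace-transform bounds back to a pointwise-in-$t$ estimate, I would use pigeonhole on the $2m+1$ variables in $\Sigma_m(t)$: at least one $\tau_{a^*}$ must satisfy $\tau_{a^*}\geq t/(2m+1)$. Partitioning $\Sigma_m(t)$ according to the identity of the largest variable, together with the choice of which paired unit (or the unpaired $\Pt^\Jo$) it belongs to, accounts for the combinatorial $m^2$ prefactor. On each subregion the unpaired $\Pt^\Jo$ factor is handled pointwise, using the fact that direct integration of~\eqref{e.jfn.ptwise} via the antiderivative of $\tau^{-1}|\log\tau|^{-2}$ gives
\begin{align*}
\int_{t/(2m+1)}^{t}\jfn(\tau,\betaC)\,\d\tau \leq C\,e^{(\betaC+1)Ct}\,\big|\log(\tfrac{t}{2m+1}\wedge\tfrac12)\big|^{-1},
\end{align*}
producing the $(\log(\tfrac{t}{2m+1}\wedge\tfrac12))^{-1}$ factor. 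The outgoing and incoming $\tau_0^{-1/2}$, $\tau_m^{-1/2}$ are $L^1$-integrable on $(0,t)$ and absorbed into the overall constant $C$. The delicate point will be aligning the pigeonhole choice with the pairing scheme so that the pointwise-handled large variable is consistent with the operator-level pairing of the remaining mediating-$\Pt^\Jo$ units, which I expect is exactly what the $m^2$ combinatorial factor encodes.
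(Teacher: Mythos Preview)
Your proposal is essentially the same approach as the paper's proof, with the same key ingredients: the exponential-weight identity $1=e^{\lambda t}\prod_a e^{-\lambda\tau_a}$ to break the simplex constraint, a pigeonhole on $\Sigma_m(t)$ to isolate one ``large'' variable, and operator-level Laplace transforms for the mediating factors using Lemmas~\ref{l.Ptin.out}--\ref{l.Ptj} together with Lemma~\ref{l.off.diagonal}. Two organizational differences are worth noting. First, the paper does not pair factors: it bounds each of the $2m+1$ factors separately, with the pigeonhole index $a^*$ receiving the pointwise sup bound $\sup_{\tau\in[t/(2m+1),t]}e^{-\lambda\tau}\normo{\geno^{(a^*)}_\tau}$ and every other factor receiving $\Normo{\int_0^t e^{-\lambda\tau}\geno^{(a')}_\tau\,d\tau}$. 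For the off-diagonal and $\Pt^\Jo$ factors this truncated-Laplace norm is then controlled by extending the integral to $(0,\infty)$ and invoking Lemmas~\ref{l.Ptbi}\ref{l.Ptbi.Lap} and~\ref{l.Ptj}\ref{l.Ptj.Lap}. This yields $(C/\log\lambda)^{m-1}$ directly---with $m-1$ of the $m$ diagonal $\Pt^\Jo$ factors contributing $(\log\lambda)^{-1}$ and all $m-1$ off-diagonal factors contributing $C$---and sidesteps the alignment difficulty you flag at the end. Second, the paper relies crucially on the \emph{positivity of all integral kernels}: this is what justifies enlarging the simplex domain after the pigeonhole and bounding $\Normo{\int_0^t}$ by $\Normo{\int_0^\infty}$. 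Your sketch implicitly needs the same device, since without positivity your operator-level pairing cannot dominate a sub-simplex integral by a product of full Laplace transforms; you should make this explicit rather than first passing to the scalar bound and then backtracking. (Minor: you cite Lemma~\ref{l.off.diagonal.bdd}, which is the pre-limiting version; the limiting bound is Lemma~\ref{l.off.diagonal}.)
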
 
\begin{proof}
To simplify notation, we index the incoming and outgoing operators by $ 0 $ and by $ m $:
$ \geno^{(0)}_{\tau_0}:=\Pt_{\tau_0} \So^*_{i_1j_1} $, $ \geno^{(m)}_{\tau_m}:=\So_{i_mj_m} \Pt_{\tau_{m}} $,
index the diagonal mediating operators by half integers:
$ \geno^{(a)}_{\tau_{a}} := 4\pi\Pt^\Jo_{\tau_{a}} $, $ a\in(\frac12+\Z)\cap(0,m) $,
and index the off-diagonal mediating operators by integers:
$ \geno^{(a)}_{\tau_{a}} := \So_{i_{a}j_{a}} \Pt_{\tau_{a}} \So^*_{i_{a+1}j_{a+1}} $, $ a\in\Z\cap(0,m) $.
Under these notation
\begin{align}
	\tag{\ref*{e.diagram.op}'}
	\label{e.diagram.op.}
	\Dio^{\Vec{(i,j)}}_t
	=
	\int_{\Sigma_m(t)}
	\geno^{(0)}_{\tau_0} \, \geno^{(1/2)}_{\tau_{1/2}} \, \cdots \, \geno^{(m)}_{\tau_{m}} 
	\
	\d \vec{\tau}.
\end{align}

In general,
integrals like the one on the r.h.s.\ \eqref{e.diagram.op.} should be defined as operator-valued integrals.
Here we appeal to a simpler \emph{alternative} definition.
Recall from \eqref{e.Pt.in.ker}--\eqref{e.Pt.out.ker}, \eqref{e.Pt.med.ker}, and \eqref{e.Ptj.ker}
that each $ \geno^{(a)}_{\tau_{a}} $ has an integral kernel.
Accordingly, for each $ u,u'\in\Lsp^{2}(\R^{2n}) $,
we interpret 
$ 
	\ip{ u' }{
	\int_{\Sigma_m(t)}
	\geno^{(0)}_{\tau_0} \, \geno^{(1/2)}_{\tau_{1/2}} \, \cdots \, \geno^{(m)}_{\tau_{m}} 
	\
	\d \vec{\tau}
	\,u }
$
as an integral over $ \Sigma_m(t)\times(\R^{2n})^{2m+1} $ by expressing each $ \geno^{(a)}_{\tau_{a}} $ by its kernel.
Our subsequent analysis implies that this integral is absolutely convergent for each $ u,u'\in\Lsp^{2}(\R^{2n}) $,
and therefore \eqref{e.diagram.op.} defines an operator on $ \Lsp^2(\R^{2n}) $.
Since all the kernels are positive (c.f., \eqref{e.Pt.in.ker}--\eqref{e.Pt.out.ker}, \eqref{e.Pt.med.ker}, and \eqref{e.Ptj.ker}), we have
\begin{align}
	\notag
	|\ip{ u' } { \Dio^{\Vec{(i,j)}}_t\, u}|
	&=
	\Big|
	\IP{ u' }{
	\int_{\Sigma_m(t)}
	\geno^{(0)}_{\tau_0} \, \geno^{(1/2)}_{\tau_{1/2}} \, \cdots \, \geno^{(m)}_{\tau_{m}} 
	\
	\d \vec{\tau}
	\,u }
	\Big|
\\
	&\leq	
	\label{e.diagram.op.int}
	\int_{\Sigma_m(t)}
	\Big|
	\IP{ u' }{ \prod_{a\in A} \geno^{(a)}_{\tau_a}u }
	\Big|
	\
	\d \vec{\tau}
=
	\int_{\Sigma_m(t)}
	\IP{ |u'| }{ \prod_{a\in A} \geno^{(a)}_{\tau_a}|u| }
	\
	\d \vec{\tau}.
\end{align}%

We now seek to bound \eqref{e.diagram.op.int}.
An undesirable feature of~\eqref{e.diagram.op.int} is the constraint $ \tau_0+\tau_{1/2}+\ldots+\tau_m =t $ from $ \Sigma_m(t) $.
To break such a constraint, fix $ \lambda\geq 2 $.
In~\eqref{e.diagram.op.int}, multiply and divide by $ e^{\lambda\betaC t} $, and use 
$
	\Sigma_m(t) \subset (\cup_{a\in A} \{ \tau_a \geq \tfrac{t}{2m+1} \}) \cap (0,t)^{2m+1}
$
to obtain
\begin{align}
	\label{e.diagram.bd1}
	\normo{ \Dio_t^{\Vec{(i,j)}} }
	\leq
	e^{\lambda\betaC t}
	\sum_{a\in A} 
	F_a, 
	\quad
	F_a
	:=
	\Big( \sup_{\tau\in[\frac{t}{2m+1},t]} \!\! e^{-\lambda\betaC \tau}\normo{\geno^{(a)}_{\tau}} \Big)
	\prod_{a'\in A\setminus\set{a}} \!\! \NOrmo{ \int_0^t e^{-\lambda\betaC \tau} \geno^{(a')}_{\tau} \d \tau }.
\end{align}
To bound the `sup' term in~\eqref{e.diagram.bd1},
forgo the exponential factor (i.e., $  e^{-\lambda \betaC \tau} \leq 1 $),
and use the bound on $ \normo{\geno^{(a)}_{\tau}} $ from 
Lemmas~\ref{l.Ptin.out}\ref{l.Ptin.out.}, \ref{l.Ptbi}\ref{l.Ptbi.}, and \ref{l.Ptj}\ref{l.Ptj.}.
We have
\begin{align}
	\notag
	\sup_{\tau\in[\frac{t}{2m+1},t]} e^{-\lambda\betaC \tau} \normo{\geno^{(a)}_{\tau}}
	&\leq
	C 
	\left\{\begin{array}{l@{,}l}
		(t/m)^{-1/2}	&	\text{ for } a=0,m,
		\\
		(t/m)^{-1}		&	\text{ for } a \in \Z\cap(0,m),
		\\
		(t/m)^{-1}\, (\log(\frac{t}{2m+1}\wedge\frac12))^{-2} e^{C(1+\betaC)t}		&	\text{ for } a \in (\frac12+\Z)\cap(0,m),
	\end{array}\right\}
\\
	\label{e.diagram.bd2}
	&\leq
	C  m e^{C(1+\betaC)t}
	\left\{\begin{array}{l@{,}l}
		t^{-1/2}	&	\text{ for } a=0,m,
		\\
		t^{-1}		&	\text{ for } a \in \Z\cap(0,m),
		\\
		t^{-1}\, (\log(\frac{t}{2m+1}\wedge\frac12))^{-2}	&	\text{ for } a \in (\frac12+\Z)\cap(0,m).
	\end{array}\right.
\end{align}
Moving on, to bound the integral terms in~\eqref{e.diagram.bd1},
for $ a'\in\{0,m\}\cup ((\frac12\Z)\cap(0,m)) $, we forgo the exponential factor, and use the bound from Lemma~\ref{l.Ptin.out}\ref{l.Ptin.out.} to get
\begin{align}
	\label{e.diagram.bd3}
	&\NOrmo{ \int_0^t e^{\lambda\betaC \tau}\geno^{(a')}_{\tau} \d \tau }
	\leq
	\int_0^t \Normo{ \geno^{(a')}_{\tau} } \d \tau 
	\leq
	C t^{1/2},	
	\quad
	\text{for } a'=0,m,
\\
	\label{e.diagram.bd3.}
	&\NOrmo{ \int_0^t e^{\lambda\betaC \tau} \geno^{(a')}_{\tau} \d \tau }
	\leq
	\int_0^t \Normo{ \geno^{(a')}_{\tau} } \d \tau 
	\leq
	C  
	\, (\log(\tfrac{t}{2m+1}\wedge\tfrac12))^{-1} e^{C(1+\betaC)t},
	\quad
	\text{for } a' \in (\tfrac12+\Z)\cap(0,m).
\end{align}
The bound~\eqref{e.diagram.bd3.} gives a useful logarithmic decay in $ t\to 0 $,
but has an undesirable exponential growth in $ t\to \infty $.
We will also need a bound that does not exhibit the exponential growth.
For $ a' \in (\frac12\Z)\cap(0,m) $,
we use the fact that $ \geno^{(a')}_{\tau} $ is an integral operator with a \emph{positive} kernel to write
\begin{align*}
	\NOrmo{ \int_0^t e^{-\lambda\betaC\tau}\geno^{(a')}_{\tau} \d\tau }
	\leq
	\NOrmo{ \int_0^\infty e^{-\lambda\betaC\tau}\geno^{(a')}_{\tau} \d\tau }.
\end{align*}
The last expression is a Laplace transform, and has been evaluated in Lemmas \ref{l.Ptbi}\ref{l.Ptbi.Lap} and \ref{l.Ptj}\ref{l.Ptj.Lap}, whereby
\begin{align*}
	\NOrmo{ \int_0^t e^{-\lambda\betaC\tau}\geno^{(a')}_{\tau} \d\tau }
 	\leq
	\left\{\begin{array}{l@{,}l}
		\normo{\So_{ij}\Go_{-\lambda\betaC}\So^*_{k\ell}} &	\text{ for } a'\in (0,m)\cap\Z,
		\\
		\normo{(\Jo_{-\lambda\betaC}-\betaC)^{-1}}	&	\text{ for } a' \in (0,m)\cap(\frac12+\Z).
	\end{array}\right.
\end{align*}
Here $(i<j)\neq (k<\ell)$ corresponds to the index $a'$.
Using the bounds on $ \normo{\So_{ij}\Go_z\So^*_{k\ell}} $ from Lemma~\ref{l.off.diagonal}
and the bound $ \norm{(\Jo_{-\lambda\betaC}-\betaC)^{-1}} \leq 1/\log\lambda $ (c.f.,~\eqref{e.Jo})
we have
\begin{align}
	\label{e.diagram.bd4}
 	\NOrmo{ \int_0^t e^{-\lambda\betaC\tau}\geno^{(a')}_{\tau} \d\tau }
 	\leq
 	C\,
	\left\{\begin{array}{l@{,}l}
		1 &	\text{ for } a'\in (0,m)\cap\Z,
		\\
		(\log \lambda)^{-1}	&\text{ for } a' \in (0,m)\cap(\frac12+\Z).
	\end{array}\right.
\end{align}

For $ a \in \frac12\Z $,
inserting the bounds \eqref{e.diagram.bd2}--\eqref{e.diagram.bd3}, \eqref{e.diagram.bd4} into~\eqref{e.diagram.bd1} gives
\begin{align*}
	F_a
	\leq
	C m e^{\lambda C\,(\betaC+1) t} 
	(\log(\tfrac{t}{2m+1}\wedge\tfrac12))^{-2}
	\,
	t^{-1+\frac12+\frac12}
	\,
	(\log\lambda)^{m-1}
	C^{2m+1}.
\end{align*}
For $ a \not\in \frac12\Z $, in~\eqref{e.diagram.bd1},
use the bound~\eqref{e.diagram.bd2} for the sup term,
use \eqref{e.diagram.bd3.} for $ a'=\frac12 $,
and use \eqref{e.diagram.bd3} and \eqref{e.diagram.bd4} for other $ a' $.
This gives
\begin{align*}
	F_a
	\leq
	C m e^{\lambda C\,(\betaC+1) t} 
	(\log(\tfrac{t}{2m+1}\wedge\tfrac12))^{-1}
	\,
	\left\{\begin{array}{l@{,}l}	
		t^{-1/2+1/2}	&\text{ for } a\in \{0,m\}
	\\
		t^{-1+1/2+1/2}  &\text{ for } a\in \Z\cap(0,m)
	\end{array}\right\}
	\,
	(\log\lambda)^{m-1} C^{2m+1}.
\end{align*}
Inserting these bounds on $ F_a $ into~\eqref{e.diagram.bd1}, we conclude the desired result~\eqref{e.dio.bd}.
\end{proof}

\subsubsection*{Proof of Theorem~\ref{t.main}\ref{t.main.diagram}}
Sum the bound~\eqref{e.dio.bd} over $ \Vec{(i,j)}\in\diag(n) $, and note that $ | \diag(n,m) |\leq (n(n-1)/2)^{m} $ (c.f., \eqref{e.diag.set.m}).
In the result, choose $ \lambda = Cn^2 $ for some large but fixed $ C<\infty $, we have
\begin{align}
	\label{e.diagram.op.bd.}
	\Normo{ \Dio^{\diag(n)}_t }
	&\leq
	\sum_{m=1}^\infty
	m^2n^2 (\log(\tfrac{t}{2m+1}\wedge\tfrac12))^{-1} \, 2^{-(m-1)} \exp\big( Ce^{Cn^2}(\betaC+1)t\big)
\\
	\label{e.diagram.op.bd}
	&\leq
	C\,n^2\exp\big( e^{Cn^2}(\betaC+1)Ct\big).
\end{align} 
This verifies that $ \Dio^{\diag(n)}_t $ defines a bounded operator on $ \Lsp^2(\R^{2n}) $.

To show the semigroup property, we fix $ s<t\in\R_+ $ and calculate $ (\Pt_s+\Dio^{\diag(n)}_s)(\Pt_{t-s}+\Dio^{\diag(n)}_{t-s}) $,
which boils down to calculating
$\Pt_s\Pt_{t-s}$, $\Pt_{s}\Dio^{\Vec{(i',j')}}_{t-s}$, $\Dio^{\Vec{(i,j)}}_s\Pt_{t-s}$, $\Dio^{\Vec{(i,j)}}_s\Dio^{\Vec{(i',j')}}_{t-s}$,
for $ \Vec{(i,j)} \in \diag(n,m) $ and $ \Vec{(i',j')} \in \diag(n,m') $.
To streamline notation, we relabel time variables as $ t_{k} := \tau_0+\ldots+\tau_{k/2-1} $, and set
\begin{align*}
	B^{\Vec{(i,j)}}(\vec{t}\,)
	:=
	\Pt_{t_{1}} \So^*_{i_1j_1} \big( 4\pi\Pt^\Jo_{t_2-t_1} \big) 
	\Big( \prod_{k=1}^{{m}-1} \So_{i_{k}j_{k}} \Pt_{t_{2k+1}-t_{2k}} \So^*_{i_{k+1}j_{k+1}} \, (4\pi\Pt^\Jo_{\tau_{2k+2}-t_{2k+1}}) \Big) 
	\So_{i_{m}j_{m}} \Pt_{t-t_{2m}}.
\end{align*}
Using~\eqref{e.diagram.op.} and the semigroup property of $ \Pt_\Cdot $, we have $ 	\Pt_s \Pt_{t-s} = \Pt_{t} $,
\begin{align}
	\label{e.sg1}
	\Pt_s\Dio^{\Vec{(i',j')}}_{t-s}
	&=
	\int_{(s,t)^{2m'}_<} B^{\Vec{(i',j')}}(\vec{t}\,) \, \d \vec{t},
\\	
	\label{e.sg2}
	\Dio^{\Vec{(i,j)}}_s \Pt_{t-s}
	&=
	\int_{(0,s)^{2m}_<} B^{\Vec{(i,j)}}(\vec{t}\,) \, \d \vec{t},
\\	
	\label{e.sg3}
	\Dio^{\Vec{(i,j)}}_s \Dio^{\Vec{(i',j')}}_{t-s}
	&=
	\int_{\Omega_{2m,2m'}(s,t)} 
	B^{\Vec{(i'',j'')}}(\vec{t}\,) \, \d \vec{t},
\end{align}
where $ (a,b)^{k}_{<} := \{ \vec{t} \in (a,b)^{k} : a<t_1<\ldots<t_{k}<b \} $,
$ \Omega_{k,\ell}(s,t) := \{ \vec{t}\in (0,t)^{k+\ell} : \ldots<t_{k}<s<t_{k+1}<\ldots<t_{k+\ell}<t \} $,
and $ \Vec{(i'',j'')} $ is obtained by concatenating $ \Vec{(i,j)} $ and $ \Vec{(i',j')} $,
i.e.,
\begin{align*}
	\Vec{(i'',j'')}
	=
	(i''_k,j''_k)_{k=1}^{m+m'}
	:=
	\big((i_1<j_1),\ldots, (i_m<j_m), (i'_1<j'_1), \ldots, (i_{m'}<j_{m'})  \big).
\end{align*}
Such an index is not necessarily in $ \diag(n) $, because we could have $ (i_m<j_m)=(i'_1<j'_1) $.
When this happens, applying Lemma~\ref{l.semigroup} with $ (i,j)=(i_m,j_m) $ and with $ (t',t)\mapsto (t_{2m-1},t_{2m+2}) $ gives
\begin{align}
	\tag{\ref*{e.sg3}'}
	\label{e.sg3'}
	\Dio^{\Vec{(i,j)}}_s \Dio^{\Vec{(i',j')}}_{t-s}
	&=
	\int_{\Omega_{2m-1,2m'-1}(s,t)} 
	B^{\Vec{(i''',j''')}}(\vec{t}\,) \, \d \vec{t},
\end{align}
where $ \Vec{(i''',j''')} $ is obtained by removing $ (i'_1<j'_1) $ from $ \Vec{(i'',j'')} $, i.e.,
\begin{align*}
	\Vec{(i''',j''')}
	:=
	\big((i_1<j_1),\ldots, (i_m<j_m), (i_{2}<j_{2})  \ldots, (i_{m'}<j_{m'})  \big)
	\in
	\diag(n).
\end{align*}
Summing~\eqref{e.sg1}--\eqref{e.sg3}, \eqref{e.sg3'} over $ \Vec{(i,j)},\Vec{(i',j')}\in\diag(n) $ verifies the desired semigroup property:
\begin{align*}
	\Pt_{s} \Pt_{t-s} + \big( \Pt_{s}\Dio^{\diag(n)}_{t-s} + \Dio^{\diag(n)}_s \Pt_{t-s} + \Dio^{\diag(n)}_s\Dio^{\diag(n)}_{t-s} \big)
	=
	\Pt_{t} + \Dio^{\diag(n)}_{t-s}.
\end{align*}

We now turn to norm continuity.
Given the semigroup property, it suffices to show continuity at $ t=0 $.
The heat semigroup $ \Pt_t $ is indeed continuous at $ t=0 $.
As for $ \Dio^{\diag(n)}_t $, 
we have $ \Dio^{\diag(n)}_0 := 0 $, and from~\eqref{e.diagram.op.bd.}
$
	\lim_{t\to 0} \normo{\Dio^{\diag(n)}_t} = 0.
$

\subsubsection*{Proof of Theorem~\ref{t.main}\ref{t.main.dcnvg}}
Given~\eqref{e.semigroup.cnvg}, proving Part~\ref{t.main.dcnvg} amounts to showing $ \Pt_t+\Dio^{\diag(n)}_t = e^{-t\Ho} $. 
Equivalently, for fixed $ u,u'\in\Lsp^2(\R^{2n}) $ %
and for $ f(t) := \ip{u'}{(\Pt_t+\Dio^{\diag(n)}_t) u} $ and $ g(t) := \ip{u'}{e^{-t\Ho} u} $, %
the goal is to show $ f(t) = g(t) $ for all $ t\ge 0 $. %
Both functions are continuous since $ \Pt_t+\Dio^{\diag(n)}_t $ and $ e^{-t\Ho} $ are norm-continuous. %
Further, by \eqref{e.diagram.op.bd} and from $ \sigma(\Ho) \subset [-C(n,\betaC),\infty) $ %
we have $ \normo{\Pt_t+\Dio^{\diag(n)}_t}+ \normo{e^{-t\Ho}} \leq C(n,\betaC) \exp( C(n,\betaC)t) $. %
Hence it suffices to match the Laplace transforms of $ f(t) $ and $ g(t) $ for sufficiently large values $ \lambda \geq C(n,\betaC) $ of the Laplace variable.

To evaluate the Laplace transform of $ f(t)=\ip{u'}{(\Pt_t+\Dio^{\diag(n)}_t) u} $, assume for a moment $ u(x),u'(x) \geq 0 $,
we integrate \eqref{e.diagram.op.} (viewed as in integral operator) against $ e^{-\lambda t} \bar{u'}(x)u(x') $ over $ t\in\R_+ $ and $ x,x'\in\R^{2n} $,
and sum the result over all $ \Vec{(i,j)}\in\diag(n) $.
This gives
\begin{align}
	\label{e.laplace.matching.f1}
	 \int_0^\infty e^{-\lambda t} f(t) \, \d t  
	= \int_0^\infty e^{-\lambda t} \Pt_t \, \d t 
	+
	\sum_{\Vec{(i,j)}\in\diag(n)}  \Bigg\langle u',  \Big( \prod_{a\in A} \int_0^\infty e^{-\lambda t} \geno^{(a)}_{t}  \, \d t \Big) u \Bigg\rangle,
\end{align}
where, the operator $ \geno^{(a)}_{t} $ are indexed as described in the preceding.
In deriving~\eqref{e.laplace.matching.f1},
we have exchanged sums and integrals, which is justified because each $ \geno^{(a)}_t $ has a positive kernel,
and $ u(x'),u'(x) \geq 0 $ under the current assumption.
On the r.h.s.\ of~\eqref{e.laplace.matching.f1},
the Laplace transforms $ \int_0^\infty e^{-t\lambda} \geno^{(a)}_{t}  \, \d t $ are evaluated as
in Lemmas~\ref{l.Ptin.out}\ref{l.Ptin.out.Lap}, \ref{l.Ptbi}\ref{l.Ptbi.Lap}, and \ref{l.Ptj}\ref{l.Ptj.Lap}.
Putting together the expressions from these lemmas, and comparing the result to~\eqref{e.resolvent.},
we now have 
\begin{align*}
	\int_0^\infty e^{-\lambda t} f(t) \, \d t 
	=
	\Ip{u'}{ \big(\text{r.h.s.\ of }\eqref{e.resolvent}\big|_{z=-\lambda}\big) u }
	=
	\Ip{u'}{ \Ro_{-\lambda} u }
	= \int_0^\infty e^{-\lambda t} g(t) \, \d t.
\end{align*}
For general $ u,u'\in\Lsp^2(\R^{2n}) $, the preceding calculation done for $ (u(x),u'(x'))\mapsto (|u(x)|,|u'(x')|) $
guarantees the relevant integrability, and justifies the exchange of sums and integrals.

\bibliographystyle{alphaabbr}
\bibliography{2dSHEmom}
\end{document}